\documentclass[10pt]{amsart}
\usepackage{a4wide}
\usepackage{amsfonts}
\usepackage{array}
\usepackage{tabularx}
\usepackage{arydshln}
\usepackage{amsmath}

\usepackage{amssymb}
\usepackage{physics}
\usepackage{graphicx}%
\usepackage{caption}
\usepackage{subcaption}
\usepackage{multirow}
\usepackage{footmisc}
\providecommand{\U}[1]{\protect\rule{.1in}{.1in}}
\newtheorem{theorem}{Theorem}

\newtheorem{lemma}[theorem]{Lemma}

\usepackage{xcolor}
\usepackage{latexsym}
\usepackage{amsmath}
\usepackage{amssymb}
\usepackage{mathrsfs}
\usepackage{graphicx}
\usepackage{color}
\usepackage{pgfpages}
\usepackage{ifthen}
\usepackage{leftidx,tensor}
\usepackage[T1]{fontenc}
\usepackage[latin1]{inputenc}
\usepackage{mathtools}
\usepackage{comment}
\usepackage{dsfont}
\usepackage[shortlabels]{enumitem}
\usepackage{aliascnt}
\usepackage[bookmarks=true,pdfstartview=FitH, pdfborder={0 0 0}, colorlinks=true,citecolor=red, linkcolor=blue]{hyperref}
\usepackage{nicefrac}

\usepackage[textsize=tiny]{todonotes}

\usepackage[english]{babel}

\usepackage{biblatex}
\newtheorem{thm}{Theorem}[section]
\newaliascnt{cor}{thm}
\newaliascnt{prop}{thm}
\newaliascnt{lem}{thm} 

\newtheorem{prop}[prop]{Proposition}

\aliascntresetthe{cor}
\aliascntresetthe{prop}
\aliascntresetthe{lem} 
\newaliascnt{defn}{thm}
\newaliascnt{asu}{thm}
\newaliascnt{con}{thm}
\newtheorem{defn}[defn]{Definition}
\newtheorem{asu}[asu]{Assumption}
\newtheorem{con}[con]{Condition}
\aliascntresetthe{defn}
\aliascntresetthe{asu}
\aliascntresetthe{con}
\newcounter{stp}
\newcounter{stpi}
\newcounter{stpci}
\newcounter{stpiii}

\newaliascnt{rem}{thm}
\newaliascnt{exa}{thm}
\newaliascnt{masu}{thm}
\newaliascnt{nota}{thm}
\newaliascnt{sett}{thm}
\newtheorem{rem}[rem]{Remark}

\aliascntresetthe{rem}
\aliascntresetthe{exa}
\aliascntresetthe{masu}
\aliascntresetthe{nota}
\aliascntresetthe{sett}
\numberwithin{equation}{section}

\setlist[enumerate]{font = \normalfont}

\newcommand {\N}	{\mathbb{N}}

\newcommand {\R}	{\mathbb{R}}

\newcommand {\T}	{\mathbb{T}}

\newcommand {\mdsone}	{\mathds{1}}

\renewcommand{\d}{\, \mathrm{d}}

\DeclareMathOperator{\mdiv}{div}

\newcommand{\loc}{\mathrm{loc}}

	\renewcommand{\phi}{\varphi}
	
	\renewcommand{\div}{\mathrm{div} \, }
	\newcommand{\rC}{\mathrm{C}}
	\newcommand{\rL}{\mathrm{L}}

	\newcommand{\rB}{\mathrm{B}}

	\newcommand{\rX}{\mathrm X}

\newcommand{\vel}{\mathtt{u}} %or \mathsf?
\newcommand{\dir}{\mathtt{d}} %or \mathsf?
\newcommand{\veltwo}{\mathtt{v}} %or \mathsf?
\newcommand{\dirtwo}{\mathtt{f}} %or \mathsf?
\newcommand{\Dom}{\mathcal{O}}
\newcommand{\proj}{\mathbb{P}}
\renewcommand{\L}{\mathcal{L}}

\newcommand{\warning}[1]{\color{red} WARNING: #1 \color{black}}
\newcommand{\QED}{\begin{flushright} $\square$ \end{flushright}}

\title[Well-posedness and regularity for a liquid crystal model]{Well-posedness and regularity for a modified liquid crystal model with transport noise}

\author{Gianmarco Del Sarto}
\address{Technische Universit\"{a}t Darmstadt\\
Fachbereich Mathematik\\
	Schlossgartenstr.\ 7\\
	64289 Darmstadt\\
	Germany}
\email{delsarto@mathematik.tu-darmstadt.de}

\author{Antoine Moneyron}
\address{Univ Rennes\\
INRIA\\
IRMAR UMR 6625\\
Campus de Beaulieu\\
263 avenue du G\'en\'eral Leclerc\\
35042 Rennes Cedex\\
France}
\email{antoine.moneyron@inria.fr}

\author{Esm\'ee Theewis}
\address{Delft University of Technology\\
Delft Institute of Applied Mathematics\\
P.O. Box 5031\\ 
2600 GA Delft\\
The Netherlands}
\email{e.s.theewis@tudelft.nl}

\author{Tarek Z\"{o}chling}
\address{Technische Universit\"at Darmstadt\\
	Fachbereich Mathematik\\
	Schlossgartenstr.\ 7\\
	64289 Darmstadt\\
	Germany}
\email{zoechling@mathematik.tu-darmstadt.de}

\date{\today}

\begin{document}

\maketitle

\begin{abstract}
We prove well-posedness for a stochastic Lin-Liu model of modified liquid crystals, coupling an incompressible fluid to a director field under transport and multiplicative noise. On smooth bounded domains in two dimensions, we prove global well-posedness for energy-class initial data, provided the leading coefficient of the polynomial director potential
is positive. On the torus, we establish local well-posedness in
time-weighted $L^p$-$L^q$ frameworks with Besov initial data, including regimes of low and negative regularity. 
\end{abstract}

\section{Introduction}
 
\noindent
In this chapter, we study a stochastic version of a simplified model for the dynamics of nematic liquid crystals. The model couples an incompressible fluid velocity field with a director field describing the preferred orientation of the molecules. Our aim is to formulate the system as an abstract stochastic evolution equation and to prove well-posedness results in both a two-dimensional $L^2$-framework and an $L^p$-$L^q$ framework on the torus.
\subsection{Description of the model}

In the deterministic setting, the dynamics of liquid crystals are described by the Ericksen--Leslie theory, derived independently by Ericksen and Leslie from conservation laws and nematic modelling; see \cite{ericksen1961conservation,leslie1968some}. In \cite{linliu1995}, Lin and Liu introduced a simplified version of the Ericksen--Leslie system, which retains the main coupling between the fluid velocity and the molecular orientation while neglecting some stretching and rotational effects. Throughout this chapter, $\vel$ denotes the incompressible fluid velocity, $\dir$ denotes the director field, and $p$ denotes the pressure. For some $k\in\N$, $k\geq1$, and coefficients $(a_i)_{0\leq i\leq k}\in\R^{k+1}$, we set
\begin{equation}
    \Phi(d) \coloneqq (\sum_{i=0}^k a_i |d|^{2i})d. \label{eq:definition-Phi}
\end{equation}
The deterministic Lin--Liu system considered in \cite{linliu1995} is then given by
\begin{equation}\label{eq:Lin-Liu}\tag{Lin-Liu}
  \begin{cases}
    &\d \vel=\big(\Delta \vel - (\vel \cdot \nabla)\vel -\nabla p-\nabla\cdot (\nabla\dir\odot\nabla\dir) \big)\d t,\\
    & \d \dir=  \big(\Delta \dir  - (\vel \cdot \nabla)\dir-\Phi(\dir) \big)\d t,\\
    &\nabla\cdot \vel =0,
  \end{cases}
\end{equation}
where we use the notation
\begin{equation}
    \nabla\dir\odot\nabla\dir \coloneqq \sum_{i=1}^d \nabla \dir_i \otimes \nabla \dir_i.
\end{equation}
This system can be viewed as a simplified Ericksen-Leslie model in which the stretching and rotation effects generated by the director field are neglected. This simplification makes the model closer to the incompressible Navier-Stokes equations and allows one to develop existence and uniqueness results using related techniques, although it also removes some features of the full physical theory; see, for instance, \cite{feireisl2012new}.

The stochastic model we study consists in the aforementioned system \eqref{eq:Lin-Liu}, where transport and multiplicative noises have been included. More explicitly, let $\mathcal{U} = \ell^2$ and let $W$ be a cylindrical Wiener process on $\mathcal{U}$, then define the transport noise terms respectively as,
\begin{equation}
     \sum_{n\geq 1} (b_{1,n}\cdot\nabla)\vel \: \d W^n \quad \text{ and } \quad \sum_{n\geq 1} (b_{2,n}\cdot\nabla)\dir \: \d W^n,
\end{equation}
where $(b_{i,n})_{n \in \N} \in \ell^2(\R^d)$, $i\in\{1,2\}$. Similarly, the multiplicative noise terms read 
\begin{equation}
     \sum_{n\geq 1} g_{1,n}(\vel,\dir) \d W^n \quad \text{ and } \quad \sum_{n\geq 1} g_{2,n}(\dir) \d W^n,
\end{equation}
with $(g_{1,n})_{n\in\N} \colon \R^d\times\R^d \to \ell^2(\R^d)$ and $(g_{2,n})_{n\in\N} \colon \R^d \to \ell^2(\R^d)$. In addition, a martingale component of pressure increment $\sum_{n \geq 1}\pi_n dW^n$ is introduced in the system. As for the deterministic contribution, it is defined up to a constant, and such that
\begin{equation}
    \nabla \pi_n = (I - \mathbb P) \big[(b_{1,n}\cdot\nabla)\vel +g_{1,n}(\vel ,\dir)\big],
\end{equation}
where $\mathbb P$ is the Helmholtz-Leray projection defined below.

Moreover, the system is studied on a convex bounded smooth domain $\Dom \subset \R^d$ and is equipped with Dirichlet and Neumann boundary conditions on $\vel$ and $\dir$, respectively. Thus, the system of interest is the following, that we label as \eqref{eq:MLC0}.
\begin{equation}\label{eq:MLC0}\tag{MLC}
  \begin{cases}
    &\d \vel=\big(\Delta \vel - (\vel \cdot \nabla)\vel -\nabla p-\nabla\cdot (\nabla\dir\odot\nabla\dir) \big)\d t + \sum_{n\geq 1} \big[(b_{1,n}\cdot\nabla)\vel +g_{1,n}(\vel ,\dir) - \nabla \pi_n \big]\d W^n \text{ on }\Dom,\\
    & \d \dir=  \big(\Delta \dir  - (\vel \cdot \nabla)\dir-\Phi(\dir) \big)\d t + \sum_{n\geq 1}\big[(b_{2,n}\cdot\nabla)\dir+g_{2,n}(\dir)\big]\d W^n \text{ on }\Dom,\\
    &\nabla\cdot \vel =0 \text{ on }\Dom,\\
    &\vel =0 \text{ and } \partial_\nu \dir=0 \text{ on }\partial\Dom\\
    &\vel\vert_{t=0} = \vel_0 \text{ and } \dir\vert_{t=0} = \dir_0.
  \end{cases}
\end{equation}
The transport noise terms $\sum_{n \geq 1} (b_{1,n}\cdot\nabla)\vel \d W^n$ and $\sum_{n \geq 1} (b_{2,n}\cdot\nabla)\dir \d W^n$ can be thought of as perturbations induced by unresolved small scale dynamics, while the multiplicative noise terms $\sum_{n \geq 1} g_{1,n}(\vel,\dir) \d W^n$ and $\sum_{n \geq 1} g_{2,n}(\dir) \d W^n$ can be understood as external forcing contributions. Furthermore, to ensure the energy balance of the system, we assume below that the sequences $(b_{i,n})_{n \geq 1}$, $i \in \{1,2\}$, fulfil a parabolicity condition -- see \cite{agresti2025nonlinear, agresti2024stochastic} for example.

\subsection{The abstract Cauchy problem}

First, we denote by
\begin{gather}\label{eq:def projected spaces}
    \mathbb{L}^2(\Dom) = \{\vel \in L^2(\Dom,\R^d) | \mdiv \vel = 0, \: \vel \cdot \nu =0 \text{ in } H^{-1/2}(\partial\Dom;\R)\},\\
    \mathbb{H}_0^1(\Dom) = H^1_0(\Dom,\R^d) \cap \mathbb{L}^2(\Dom).
\end{gather}
Then, we consider the spaces 
\begin{equation}\label{eq: def V H}
    H = \mathbb{L}^2(\Dom) \times H^1(\Dom;\R^d) \quad \text{and} \quad V = \mathbb{H}_0^1(\Dom) \times \{\dir \in H^2(\Dom;\R^d) \,|\, \partial_\nu \dir = 0 \text{ on } \partial \Dom\},
\end{equation}
equipped with the following inner products: for all $(\vel,\dir),(\vel',\dir') \in H,$
\begin{equation}
    ((\vel,\dir),(\vel',\dir'))_H \coloneqq \int_{\Dom} (\vel \cdot \vel' + \dir \cdot \dir' + \nabla \dir : \nabla \dir') \, \dd \Dom,
\end{equation}
and, for all $(\vel,\dir),(\vel',\dir') \in V$,
\begin{equation}\label{eq: def inn prod V}
    ((\vel,\dir),(\vel',\dir'))_V \coloneqq \int_{\Dom} (\vel \cdot \vel' + \dir \cdot \dir' + \nabla \vel : \nabla \vel' + \nabla \dir : \nabla \dir' + \Delta \dir \cdot \Delta \dir') \, \dd \Dom.
\end{equation}
Then, we introduce the following duality pairing $\langle \cdot, \cdot \rangle_{V \times V'}$: for all $(\vel,\dir) \in V$ and $(\vel',\dir') \in H$,
\[
\langle (\vel, \dir), (\vel',\dir')\rangle_{V \times V'} = ((\vel, \dir), (\vel',\dir'))_H.
\]
As such, $V \subset H \subset V'$ is a Gelfand triple.

We consider $W\coloneqq (W^n)_{n\geq 1}$ to be an $\ell^2$-cylindrical Brownian motion. Moreover, we denote by $Y=(\vel,\dir)$ and write the linear operator formally as
\[
A Y=\begin{pmatrix}-\mathbb P\Delta\vel\\-\Delta\dir\end{pmatrix},
\]
where $\mathbb P\colon L^2(\Dom;\R^d)\to\mathbb L^2(\Dom)$ is the orthogonal Helmholtz projection. In the variational setting, $A\colon V\to V'$ is defined by
\begin{equation*}
\langle (\vel^\sharp,\dir^\sharp),A(\vel,\dir)\rangle_{V\times V'}
=\int_\Dom\bigl(\nabla\vel:\nabla\vel^\sharp+\nabla\dir:\nabla\dir^\sharp+\Delta\dir\cdot\Delta\dir^\sharp\bigr)\,\dd\Dom,
\end{equation*}
for $(\vel,\dir),(\vel^\sharp,\dir^\sharp)\in V$.
This defines a bounded operator from $V$ into $V'$. A director forcing $f\in L^2(\Dom;\R^d)$ is interpreted in this dual by $\langle(0,\dir^\sharp),(0,f)\rangle=\int_\Dom f\cdot(\dir^\sharp-\Delta\dir^\sharp)\,\dd\Dom$. For $f\in H^1$, integration by parts and $\partial_\nu\dir^\sharp=0$ identify this with the $H^1$ pivot pairing.

The model \eqref{eq:MLC0} can now be written as an abstract Cauchy problem in the following form:
\begin{equation}
\left \lbrace
\begin{aligned}
     \d Y(t) + A Y(t) \, dt &= F (Y(t)) \, dt + \big(B Y(t) + G(Y(t))\big) \, \d W(t), \\
     Y|_{t = 0} &= Y_0,
\end{aligned}
        \label{eq: abstract cauchy problem}
            \right. 
\end{equation}
where $Y_0 = (\vel_0, \dir_0)^T$ denotes the initial condition, and
\[
F(Y) = \begin{pmatrix}
    - \mathbb{P} \div (u \otimes u  + (\nabla\dir\odot\nabla\dir)) \\
    -(\vel \cdot \nabla)\dir - \Phi (\dir)
\end{pmatrix}, \quad B Y  = \begin{pmatrix}
   \mathbb{P}(b_{1,n}\cdot\nabla)\vel \\
   (b_{2,n}\cdot\nabla)\dir
\end{pmatrix}_{n \geq 1}, 
\quad 
G Y = \begin{pmatrix}
     \mathbb{P} g_{1,n}(\vel, \dir) \\
     g_{2,n}( \dir)
\end{pmatrix}_{n \geq 1}.
\]
In dimension two, the drift is defined on $V$ by the corresponding weak pairing
\begin{equation*}
\begin{split}
\langle(\vel^\sharp,\dir^\sharp),F(\vel,\dir)\rangle_{V\times V'}
&=\int_\Dom(\vel\otimes\vel+\nabla\dir\odot\nabla\dir):\nabla\vel^\sharp\,\dd\Dom -\int_\Dom\bigl((\vel\cdot\nabla)\dir+\Phi(\dir)\bigr)\cdot(\dir^\sharp-\Delta\dir^\sharp)\,\dd\Dom.
\end{split}
\end{equation*}
All these integrals are finite for $(\vel,\dir),(\vel^\sharp,\dir^\sharp)\in V$.

\subsection{Scaling and criticality}\label{sec:scaling}

We now investigate the scaling properties of the model \eqref{eq:Lin-Liu} and determine the corresponding critical function spaces. We consider smooth solutions on $\mathbb{R}^d$ to neglect boundary effects, and later comment on how the same heuristic applies to bounded domains.

\subsubsection*{Formal scaling analysis}
Let $\lambda > 0$ and define the rescaled fields
\begin{equation}\label{eq:rescaled_fields}
(\vel^\lambda, \dir^\lambda)(t,x) = (\lambda^{\alpha} \vel(\lambda t, \lambda^{1/2}x),\, \lambda^{\beta} \dir(\lambda t, \lambda^{1/2}x)),
\end{equation}
where $\alpha,\beta\in\mathbb{R}$ are scaling exponents to be determined. The spatial rescaling $x\mapsto \lambda^{1/2}x$ and temporal rescaling $t\mapsto \lambda t$ correspond to the standard parabolic scaling. Thus the terms of the velocity equation in \eqref{eq:Lin-Liu} have the following scaling orders: $\partial_t \vel$ and $\Delta \vel$ scale as $\lambda^{\alpha+1}$, the transport term $(\vel \cdot \nabla)\vel $ scales as $\lambda^{2\alpha + 1/2}$, the coupling term $\nabla\cdot(\nabla \dir  \odot \nabla \dir )$ scales as $\lambda^{2\beta + 3/2}$.
Balancing the diffusion and convection terms in the velocity equation yields
\begin{equation}\label{eq:scaling_rel_1}
\alpha + 1 = 2\alpha + \tfrac{1}{2},
\end{equation}
which gives $\alpha = \tfrac{1}{2}$. Balancing this with the $d$-coupling term yields
\begin{equation}\label{eq:scaling_rel_2}
2\beta + \tfrac{3}{2} = \alpha + 1 = \tfrac{3}{2}, \quad\text{hence}\quad \beta = 0.
\end{equation}
Therefore, the natural scaling exponents for the Lin--Liu system are
\begin{equation*}
(\alpha, \beta) = \big(\tfrac{1}{2}, 0\big).
\end{equation*}
With $(\alpha,\beta)=(1/2,0)$, we check that all leading-order terms in the director equation scale equally. Indeed,
\begin{equation*}
\partial_t \dir^\lambda - \Delta \dir^\lambda + (\vel^\lambda \cdot \nabla)\dir^\lambda = \lambda (\partial_t \dir - \Delta \dir + (\vel\cdot\nabla)\dir)(\lambda t, \lambda^{1/2}x).
\end{equation*}
Hence the principal deterministic terms are scale invariant under \eqref{eq:rescaled_fields}.

\subsubsection*{Scaling of stochastic perturbations}
 Define the rescaled processes $\beta^n_{t,\lambda}:=\lambda^{-1/2}W^n_{\lambda t}$, which are again standard Brownian motions by the self-similarity of $W^n$. Under this scaling, and still with the choice of $\alpha = 1/2$, we obtain
\begin{align*}
(b_{1,n}\cdot\nabla)\vel ^\lambda\,d\beta^n_{t,\lambda} &= \lambda ((b_{1,n}\cdot\nabla)\vel)(\lambda t, \lambda^{1/2}x)\,d\beta^n_{t,\lambda},\\
(b_{2,n}\cdot\nabla)\dir^\lambda\,d\beta^n_{t,\lambda} &= \lambda^{1/2}  ((b_{2,n}\cdot\nabla)\dir)(\lambda t, \lambda^{1/2}x)\,d\beta^n_{t,\lambda}.
\end{align*}
Thus, transport noise preserves the principal deterministic scaling under the Brownian time rescaling. Let $(g_{1,n})_n$ and $(g_{2,n})_n$ denote the multiplicative noise coefficients in the velocity and director equations. Formally, the associated Brownian time-change is $ \dot{\beta}^n_{t,\lambda} = \lambda^{1/2} \dot{W}^n_{\lambda t}$. The stochastic integrals transform as
\[
g_1(\vel ^\lambda,\dir ^\lambda)\d \beta^n_{t,\lambda} = \lambda^{1/2}\,g_1(\vel ^\lambda,\dir ^\lambda)\,\dot{W}^n_{\lambda t}\d t, \qquad g_2(\dir ^\lambda)\d \beta^n_{t,\lambda} = \lambda^{1/2}\,g_2(\dir ^\lambda)\,\dot{W}^n_{\lambda t}\d t.
\]
Assuming a polynomial ansatz
\[
g_{1,n}(\vel,\dir) \sim |\vel|^{\delta_1}|\dir|^{\eta_1},\qquad
g_{2,n}(\dir) \sim |\dir|^{\eta_2},
\]
and substituting $\vel^\lambda = \lambda^{1/2} \vel$, $\dir^\lambda = \dir$, one finds
\[
g_{1,n}(\vel^\lambda,\dir^\lambda)\,d\beta^n_{t,\lambda} \sim \lambda^{(1+\delta_1)/2} |\vel|^{\delta_1}|\dir|^{\eta_1} \dot{W}^n_{\lambda t}\d t, \quad
g_{2,n}(\dir^\lambda)\,d\beta^n_{t,\lambda} \sim \lambda^{1/2} |\dir|^{\eta_2} \dot{W}^n_{\lambda t}\d t.
\]
Critical or subcritical growth relative to the leading-order terms requires
\[
\frac{1+\delta_1}{2} \le 3/2,
\]
hence
\[
\delta_1 \le 2.
\]
For the director multiplicative noise, however, there is no velocity exponent anymore. Since
\[
g_{2,n}(\dir^\lambda)\d \beta^n_{t,\lambda}
\sim \lambda^{1/2},
\]
while the deterministic director equation scales like \(\lambda\), this noise is automatically subcritical from the point of view of this formal scaling. In particular, the previous restriction
\[
\delta_2\le 1
\]
disappears, because \(\delta_2\) is no longer present when \(g_{2,n}=g_{2,n}(\dir)\). 

Consequently, to remain critical or subcritical with respect to the deterministic scaling, the multiplicative noise in the velocity equation may grow at most quadratically in $\vel$. For the director equation, under the structural assumption $g_{2,n}=g_{2,n}(\dir)$, there is no velocity-growth restriction coming from scaling. The dependence on $\dir$ does not affect the formal scaling because the director has scaling exponent $0$.
\subsubsection*{Critical spaces}
Let $(u_0,d_0)$ denote the initial data, and define the rescaled data
\begin{equation*}
(\vel_{0,\lambda}, \dir_{0,\lambda})(x) = (\lambda^{1/2}\vel_0(\lambda^{1/2}x), \dir_0(\lambda^{1/2}x)).
\end{equation*}
Then for Lebesgue norms we have
\begin{equation*}
\|\vel_{0,\lambda}\|_{\rL^q(\mathbb{R}^d)} = \lambda^{\frac{1}{2}-\frac{d}{2q}} \|\vel_0\|_{\rL^q(\mathbb{R}^d)}, \qquad \|\dir_{0,\lambda}\|_{\rL^r(\mathbb{R}^d)} = \lambda^{-\frac{d}{2r}} \|\dir_0\|_{\rL^r(\mathbb{R}^d)}.
\end{equation*}
Hence scale invariance requires $q=d$ and $r=\infty$, leading to the critical Lebesgue space pair
\begin{equation*}
\rL^d(\mathbb{R}^d) \times \rL^\infty(\mathbb{R}^d).
\end{equation*}

Similarly, for homogeneous Besov norms one obtains
\begin{equation*}
\|\vel_{0,\lambda}\|_{\dot{\rB}^s_{p,q}} \sim \lambda^{\frac{1+s}{2}-\frac{d}{2p}} \|\vel_0\|_{\dot{\rB}^s_{p,q}}, \qquad \|\dir_{0,\lambda}\|_{\dot{\rB}^{s'}_{p,q}} \sim \lambda^{\frac{s'}{2}-\frac{d}{2p}} \|\dir_0\|_{\dot{\rB}^{s'}_{p,q}}.
\end{equation*}
The invariant Besov pair thus satisfies
\begin{equation*}
\frac{1+s}{2}-\frac{d}{2p}=0, \quad \frac{s'}{2}-\frac{d}{2p}=0, \quad \text{that is, } \quad s=\frac{d}{p}-1,\; s'=\frac{d}{p}.
\end{equation*}
Therefore, the scaling-invariant function space for the pair $(\vel,\dir)$ is
\begin{equation*}
\dot{\rB}^{\frac{d}{p}-1}_{p,q}(\mathbb{R}^d) \times \dot{\rB}^{\frac{d}{p}}_{p,q}(\mathbb{R}^d).
\end{equation*}

\subsection{Notation}
\label{subsec:notation}
Throughout the paper, we adopt the following notation regarding stopping times. Denote by $E$ a Banach space, $T>0$ a deterministic time, and $\tau$ a stopping time with $0\leq \tau \leq T$ $\mathbb{P}$-a.s. When we write $L^p(0,\tau;E)$ or $H^{m,p}(0,\tau;E)$, we mean the corresponding Bochner and Sobolev spaces on the random interval $(0,\tau(\omega))$. Zero extension after $\tau$ is used for Bochner $L^p$-spaces and for the integrands in stopped equations. The spaces $H^{m,p}(0,\tau;E)$ and $H^{\theta,p}(0,\tau;E)$ are restriction spaces on $(0,\tau(\omega))$, with their restriction norms.

\subsection{Definition of solutions and main assumptions}
Consider the ground spaces $X_0$ and $X_1$. Next, using the notation of \cite{agresti2025nonlinear}, we denote the real interpolation space with parameters $(\theta,p) \in (0,1) \times [1,\infty]$ by $X_{\theta,p} = (X_0, X_1)_{\theta,p}$. Similarly, we denote by $X_\theta = [X_0, X_1]_{\theta}$ the complex interpolation space of parameter $\theta \in (0,1)$. At the endpoints, $X_0$ and $X_1$ denote the ground spaces themselves. In this setting, we define a few notions of solution to \ref{eq:MLC0} as follows.
We work on a filtered probability space $(\Omega,\mathscr F,(\mathscr F_t)_{t\geq0},\mathbb P)$ satisfying the usual conditions, with $W$ an $(\mathscr F_t)$-cylindrical Brownian motion. We write $w_\kappa(t)=t^\kappa$ and $L^p_\kappa(0,T;E)=L^p((0,T),w_\kappa(t)\d t;E)$.
\begin{defn}
    Let $p \in [2,\infty)$ and $\kappa \in [0, \frac{p-2}{2}) \cup \{0\}$.
    Let $Y_0$ be an $\mathscr F_0$-measurable $X_{1-\frac{1+\kappa}{p},p}$-valued random variable.
    \begin{enumerate}
        \item A pair $(Y,\sigma)$ is called an $L_{\kappa}^p$-solution of \eqref{eq: abstract cauchy problem} if $\sigma : \Omega \rightarrow [0,\infty]$ is a stopping time, and $Y : \R_+\times\Omega \rightarrow X_0$ such that the following propositions hold,
        \begin{itemize}
            \item $Y_{\sigma \wedge \cdot}$ is a strongly progressively measurable process, and, for all $T>0$, almost surely,%
            \begin{equation*}
                Y \in L_{\kappa}^p(0,T \wedge \sigma; X_1), \quad Y_{\sigma \wedge \cdot} \in C([0,T]; X_{1-\frac{1+\kappa}{p},p}).
            \end{equation*}
            \item almost surely, for all $t\geq0$, the following equality holds in $X_0$,
            \begin{equation*}
                Y_{\sigma \wedge t} - Y_0 = \int_0^{t} \mdsone_{[0,\sigma]}(s) \Big[- AY(s) + F(Y(s))\Big] ds + \int_0^{t} \mdsone_{[0,\sigma]}(s) \Big[BY(s) + G(Y(s))\Big] \d W_s.
            \end{equation*}
        \end{itemize}
        \item A pair $(Y,\sigma)$ is called an $L_{\kappa}^p$-local solution if $\sigma$ is a stopping time and there are finite stopping times $\sigma_n\leq\sigma$ with $\sigma_n\uparrow\sigma$ almost surely such that $(Y,\sigma_n)$ is an $L_{\kappa}^p$-solution for every $n$. We call $(\sigma_n)_{n\geq1}$ a localizing sequence. It is called unique if, for every other local solution $(\tilde{Y},\tilde{\sigma})$ with the same initial datum and Brownian motion, $Y=\tilde Y$ on $[0,\sigma\wedge\tilde\sigma)$ a.s., and maximal if, in addition, $\tilde\sigma\leq\sigma$ a.s.. An $L_{\kappa}^p$-local solution $(Y,\sigma)$ is said global whenever $\sigma = \infty$ a.s.
    \end{enumerate}
\end{defn}

\noindent
Throughout the next sections, we will work with the following assumptions.

\begin{asu}\label{ass:loc well-posed2}
The following propositions hold:
\begin{enumerate}\setlength\itemsep{.4em}
    \item there exists some $\nu \in (0,2)$ such that for $i=1,2$, $(b_{i,n})_{n\in\N}\in  
     \ell^2(\R^d)$ satisfies 
     $$
     \sum_{n\in\N}(b_{i,n}\cdot\xi)^2 \leq \nu  |\xi|^2 \text{ for all $\xi\in\R^d$,  \qquad\qquad \emph{(parabolicity)}}
     $$
     \item $(g_{1,n})_{n\in\N}\colon \R^d\times\R^d\to 
     \ell^2(\R^d)$ and $(g_{2,n})_{n\in\N}\colon \R^d\to 
     \ell^2(\R^d)$ are globally Lipschitz, 
    \item The map
$g_2:\R^d\to\ell^2(\R^d)$,
$g_2(y)=(g_{2,n}(y))_{n\in\N}$,
is continuously differentiable, and its derivative
$Dg_2=(\nabla g_{2,n})_{n\in\N}:
\R^d\to\ell^2(\R^{d\times d})$
is globally Lipschitz.
\end{enumerate}
\end{asu}
\begin{asu}\label{ass:loc well-posed-glob}
Assumption \ref{ass:loc well-posed2} holds, and the coefficient $a_k$ involved in the definition of $\Phi$ (equation \ref{eq:definition-Phi}) is positive.
\end{asu}
\noindent 
As far as the multiplicative noise coefficients are concerned,
Assumption \ref{ass:loc well-posed2} is sufficient for
local well-posedness in Propositions
\ref{prop:loc well-posed L2} and
\ref{prop:loc well-posed Lp}.
For $-1<s<1$, Propositions
\ref{prop:loc well-posed Lp subcritical} and
\ref{prop:loc well-posed Lp neg} require additional
regularity of the noise coefficients and a Sobolev
embedding condition, as specified in their statements. Under an additional condition on the leading order
coefficient of $\Phi$
(Assumption \ref{ass:loc well-posed-glob}),
we also show global well-posedness in the
bidimensional $L^2$-setting.

\section{$L^2$-setting in 2D}
\label{sec: L2 setting in 2D}
%\begin{enumerate}
%    \item define ground space - Antoine
%    \item coercivity of operator - Antoine
%    \item coercivity with nonlinearity - Antoine
%\end{enumerate}

\noindent
In this section we take $X_0=V'$, $X_1=V$, $p=2$ and $\kappa=0$. Thus $X_{1/2}=H$, and by interpolation
\begin{equation*}
X_{3/4}=[H,V]_{1/2}\hookrightarrow H^{1/2}(\Dom;\R^2)\times H^{3/2}(\Dom;\R^2).
\end{equation*}
The aim of this section is to prove the well-posedness of \ref{eq: abstract cauchy problem}, that is the following proposition,
\begin{prop}[2D well-posedness]\label{prop:loc well-posed L2}
    Assume $d=2$.
    \begin{enumerate}
        \item Assume that Assumption \ref{ass:loc well-posed2} holds. For all $Y_0 \in H$, there exists a unique maximal solution $(Y, \sigma)$ to \ref{eq: abstract cauchy problem}, with $\sigma > 0$, and
        \begin{align*}
            \mathbb{P}(\sigma < \infty, \sup_{[0,\sigma)} \|Y\|_H^2 + \int_0^\sigma \|Y\|_V^2 < \infty) = 0.
        \end{align*}
        \item Assume that Assumption \ref{ass:loc well-posed-glob} holds. Then, the unique maximal solution described above is global.
    \end{enumerate}
\end{prop}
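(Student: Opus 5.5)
The plan is to cast \eqref{eq: abstract cauchy problem} in the critical variational framework of Agresti--Veraar \cite{agresti2025nonlinear,agresti2024stochastic} with $X_0=V'$, $X_1=V$, $p=2$, $\kappa=0$. That theory gives local well-posedness and the blow-up criterion in part (1) once three conditions hold: the linear pair $(A,B)$ is coercive, the nonlinearities are locally Lipschitz of critical order, and the initial datum lies in $X_{1/2}=H$. For global existence in part (2), the theory only requires an a priori energy bound that rules out blow-up.

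\textbf{Step 1 (linear coercivity).} We show that there are $\theta>0$ and $M$ such that
\[
\langle Y,AY\rangle-\tfrac12\|BY\|_{\mathcal L_2(\ell^2,H)}^2\ge \theta\|Y\|_V^2-M\|Y\|_H^2 .
\]
In the $\vel$-component this follows from parabolicity with $\nu<2$ and the contractivity of $\mathbb P$. In the $\dir$-component the $H$-norm contains $\nabla\dir$, so we must bound $\sum_n\|\nabla((b_{2,n}\cdot\nabla)\dir)\|_{L^2}^2$. Since $b_{2,n}$ is constant, this equals $\sum_n\|(b_{2,n}\cdot\nabla)\nabla \dir\|^2\le\nu\|\nabla^2\dir\|^2$. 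Elliptic regularity for the Neumann Laplacian on the convex domain gives $\|\nabla^2\dir\|_{L^2}\le\|\Delta\dir\|_{L^2}$ (the Grisvard inequality: on convex domains the boundary term has a good sign). Combining these, $\nu/2<1$ gives coercivity.

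\textbf{Step 2 (critical nonlinear estimates).} We verify $\|F(Y)-F(Y')\|_{V'}+\|G(Y)-G(Y')\|_{\mathcal L_2(\ell^2,H)}\lesssim(1+\|Y\|_{X_{3/4}}+\|Y'\|_{X_{3/4}})\|Y-Y'\|_{X_{3/4}}$ plus lower-order terms, using $X_{3/4}\hookrightarrow H^{1/2}\times H^{3/2}$.
\begin{itemize}
\item For $\vel\otimes\vel$ and $\nabla\dir\odot\nabla\dir$ tested against $\nabla\vel^\sharp$, we use $H^{1/2}\hookrightarrow L^4$ in 2D.
\item For $(\vel\cdot\nabla)\dir$ tested against $\dir^\sharp-\Delta\dir^\sharp\in L^2$, we use $\|\vel\|_{L^4}\|\nabla\dir\|_{L^4}$.
\item For $\Phi(\dir)$ we need an $L^2$ bound. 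Since $H^{3/2}\hookrightarrow L^\infty$ in 2D, $\Phi$ is locally Lipschitz on $X_{3/4}$. The polynomial growth is subcritical, with $\|\dir\|_{L^\infty}$ controlled by interpolation, so the required form with $\rho$-powers holds (the abstract theory allows arbitrary polynomial local growth in subcritical components).
\item For $G$ we need $H^1$-Lipschitz bounds on $\dir\mapsto g_2(\dir)$. This is where Assumption~\ref{ass:loc well-posed2}(3) is used: $\nabla g_2(\dir)=Dg_2(\dir)\nabla\dir$, and with $Dg_2$ Lipschitz the difference is bounded by $\|\dir-\dir'\|_{L^\infty}\|\nabla\dir\|_{L^2}+\|\nabla(\dir-\dir')\|_{L^2}$, which is fine in $X_{3/4}$.
\end{itemize}
Invoking the local theorem gives the unique maximal solution. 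The blow-up alternative in the stated form is the standard criterion from that theory.

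\textbf{Step 3 (global existence, main obstacle).} We apply It\^o's formula to the Lin--Liu energy
\[
\mathcal E(Y)=\tfrac12\|\vel\|_{L^2}^2+\tfrac12\|\nabla\dir\|_{L^2}^2+\int_\Dom\Psi(\dir),
\]
where $\nabla\Psi=\Phi$. The key cancellation is between $\int(\nabla\dir\odot\nabla\dir):\nabla\vel$ and $\int(\vel\cdot\nabla)\dir\cdot\Delta\dir$; it follows from $\div\vel=0$, $\vel|_{\partial\Dom}=0$, and integration by parts. The transport noise must satisfy $\int\Phi(\dir)\cdot(b\cdot\nabla)\dir=0$ (it is a divergence of $\Psi(\dir)b$) and be handled by parabolicity as in Step 1. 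The hardest part is the It\^o correction for $\int\Psi(\dir)$: it produces $\sum_n\int D\Phi(\dir)[(b_{2,n}\cdot\nabla)\dir+g_{2,n}]^{\otimes 2}$, which has high polynomial degree. We would control it using $a_k>0$, in two pieces:
\begin{itemize}
\item The dissipative term $\int\Delta\dir\cdot\Phi(\dir)$ contains $\int|\dir|^{2k}|\nabla\dir|^2$ with a good sign, up to lower-order terms. Its size dominates the transport correction $\nu\int|\dir|^{2k}|\nabla\dir|^2$ thanks to $\nu<2$.
\item The $g_2$ part is bounded by $C(1+\int|\dir|^{2k+2})\lesssim 1+\mathcal E$, since $\Psi\ge c|\dir|^{2k+2}-C$.
\end{itemize}
A stopped Gronwall argument combined with the Burkholder--Davis--Gundy inequality then gives $\mathbb E\sup_{[0,\sigma_n\wedge T]}\|Y\|_H^2+\mathbb E\int_0^{\sigma_n\wedge T}\|Y\|_V^2\le C_T$ uniformly in $n$. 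If this direct It\^o computation is too rough, we would first estimate $L^2$ and $L^{2k+2}$ moments of $\dir$ separately, then bound $\|\nabla\dir\|$ and $\|\vel\|$. In either case the blow-up criterion from part (1) yields $\sigma=\infty$ a.s.
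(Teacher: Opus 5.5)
Your part (1) is essentially the paper's argument. Both use Theorem~6.2 of \cite{agresti2025nonlinear}, coercivity from parabolicity plus the Neumann Kadlec/Grisvard inequality, and local Lipschitz bounds at $\beta=3/4$. The paper absorbs the growth of $\Phi$ into $L_m$ via $H^1\hookrightarrow L^{8k}$ rather than through $L^\infty$-interpolation, but both work.

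For part (2) you take a genuinely different route. The paper never introduces the potential $\Psi$. It checks the nonlinear coercivity hypothesis of Theorem~6.4 of \cite{agresti2025nonlinear} directly in the pivot space $H=\mathbb L^2\times H^1$, where the director drift is tested against $\dir-\Delta\dir$. There $\Phi$ contributes $\sum_i a_i\int|\dir|^{2i+2}+\int\nabla\Phi(\dir):\nabla\dir\geq -C(1+\|\nabla\dir\|_{L^2}^2)$ as soon as $a_k>0$. The coupling cancellation is the same as yours. Because the functional is quadratic, the It\^o correction is only $\frac12\|BY+G(Y)\|_{\mathcal L_2(\mathcal U,H)}^2$, with no $D\Phi$ terms. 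The stopping, BDG and Gronwall steps are all contained in the abstract theorem.

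Your Lin--Liu energy route is also sound. The leading transport correction $\frac{\nu a_k}{2}\int|\dir|^{2k}|\nabla\dir|^2$, and its companion with $(\dir\cdot\partial_\ell\dir)^2$, is indeed dominated by the dissipated $2a_k\int|\dir|^{2k}|\nabla\dir|^2$ coming from $-\|\Delta\dir-\Phi(\dir)\|_{L^2}^2$. This gives the physically natural energy inequality, with extra control of $\|\dir\|_{L^{2k+2}}$ and $\|\Delta\dir-\Phi(\dir)\|_{L^2}$. It costs more, though. You must justify It\^o's formula for the non-quadratic functional $\int\Psi(\dir)$, which the variational framework does not supply. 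You must also handle the high-degree correction and run the stopped BDG/Gronwall argument by hand.

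One correction. On a bounded domain with constant $b_{2,n}$, the quantities $\int_\Dom\Phi(\dir)\cdot(b_{2,n}\cdot\nabla)\dir=\int_{\partial\Dom}\Psi(\dir)\,b_{2,n}\cdot\nu$ and $\int_\Dom\Delta\dir\cdot(b_{2,n}\cdot\nabla)\dir=-\frac12\int_{\partial\Dom}|\nabla\dir|^2\,b_{2,n}\cdot\nu$ do not vanish; they vanish only on $\T^d$. Since they enter only the martingale part this is harmless, but they must be estimated in BDG rather than dropped. You can bound their quadratic variation by $\nu\|\Delta\dir-\Phi(\dir)\|_{L^2}^2\|\nabla\dir\|_{L^2}^2$ and absorb it using the dissipation.
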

To prove the result above, we use some theorems from \cite{agresti2025nonlinear}. We remind first that the Helmholtz-Leray projector $\proj$ defined above is self-adjoint by classical arguments.

\subsection{Local well-posedness} \label{subsec:locWP}

We prove the first point of Proposition \ref{prop:loc well-posed L2} by using Theorem 6.2 of \cite{agresti2025nonlinear}. We check hereafter Condition \ref{cond:Assumption-6.1-first} and Condition \ref{cond:Assumption-6.1-second} -- see Assumption 6.1 of \cite{agresti2025nonlinear}.

\subsubsection{First condition of Assumption 6.1} The first condition we prove reads as follows,
\begin{con} \label{cond:Assumption-6.1-first}
    There exists $\theta > 0$ and $M \geq 0$ such that, for all $(\vel, \dir) \in V$,
    \begin{equation}
        \langle (\vel,\dir), A(\vel,\dir) \rangle_{V \times V'} - \frac{1}{2} \|B(\vel,\dir)\|_{\mathcal{L}_2(\mathcal{U},H)}^2 \geq \theta \|(\vel,\dir)\|_{V}^2 - M \|(\vel,\dir)\|_H^2.
    \end{equation}
\end{con}
\emph{Proof:} Let $(\vel,\dir)\in V$. By the definition of $A$,
\begin{equation*}
\langle(\vel,\dir),A(\vel,\dir)\rangle_{V\times V'}
=\|\nabla\vel\|_{L^2}^2+\|\nabla\dir\|_{L^2}^2+\|\Delta\dir\|_{L^2}^2
\geq\|(\vel,\dir)\|_V^2-\|(\vel,\dir)\|_H^2.
\end{equation*}
Summing over the standard orthonormal basis of $\mathcal U=\ell^2$, using the covariance bound in Assumption~\ref{ass:loc well-posed2} and Kadlec's formula from Lemma~\ref{lemma:Kadlec}, gives
\begin{align*}
\|B(\vel,\dir)\|_{\mathcal L_2(\mathcal U,H)}^2
&=\sum_{n\geq1}\Bigl(\|\proj(b_{1,n}\cdot\nabla)\vel\|_{L^2}^2+\|(b_{2,n}\cdot\nabla)\dir\|_{H^1}^2\Bigr)\\
&\leq\nu\Bigl(\|\nabla\vel\|_{L^2}^2+\|\nabla\dir\|_{L^2}^2+\|(\nabla\otimes\nabla)\dir\|_{L^2}^2\Bigr)\\
&\leq\nu\Bigl(\|\nabla\vel\|_{L^2}^2+\|\nabla\dir\|_{L^2}^2+\|\Delta\dir\|_{L^2}^2\Bigr).
\end{align*}
Here the coefficients $b_{i,n}$ are spatially constant and $\proj$ is an orthogonal projection. Consequently,
\begin{equation}\label{eq:locWP-cond1}
\langle(\vel,\dir),A(\vel,\dir)\rangle_{V\times V'}-\frac12\|B(\vel,\dir)\|_{\mathcal L_2(\mathcal U,H)}^2
\geq\left(1-\frac\nu2\right)\|(\vel,\dir)\|_V^2-C\|(\vel,\dir)\|_H^2.
\end{equation}

\subsubsection{Second condition of Assumption 6.1.} Moreover, the second condition that we prove is the following,
\begin{con} \label{cond:Assumption-6.1-second}
    For all $m \geq 1$, there exists a constant $L_m \geq 0$, a positive integer $r$, and a family of couples $\Big((\beta_j, \rho_j)\Big)_{1 \leq j \leq r} \in \Big((1/2,1) \times \R_+\Big)^r$ fulfilling $(2 \beta_j - 1) (\rho_j + 1) \leq 1, \forall j\in\{1,...,r\}$, so that, for all $(\vel,\dir), (\vel',\dir') \in V$ such that $\|(\vel,\dir)\|_H, \|(\vel',\dir')\|_H \leq m$,
    \begin{align}
        \|F(\vel,\dir) - F(\vel',\dir')\|_{V'} + \|G(\vel'&,\dir') - G(\vel,\dir)\|_{\mathcal{L}_2(\mathcal{U},H)}\\
        &\leq L_m \sum_{j=1}^r \|(\vel',\dir')-(\vel,\dir)\|_{X_{\beta_j}} ( 1 + \|(\vel', \dir')\|_{X_{\beta_j}}^{\rho_j} + \|(\vel,\dir)\|_{X_{\beta_j}}^{\rho_j}), \nonumber
    \end{align}
\end{con}
\emph{Proof:} Let $m\geq1$ and $(\vel,\dir),(\vel',\dir')\in V$ satisfy $\|(\vel,\dir)\|_H,\|(\vel',\dir')\|_H\leq m$. For every $(\vel^\sharp,\dir^\sharp)\in V$, the weak definition of $F$ gives
\begin{align*}
\langle(\vel^\sharp,\dir^\sharp),F(\vel,\dir)-F(\vel',\dir')\rangle_{V\times V'}
&=J_1+J_2,\\
J_1&=\int_\Dom\bigl(\vel\otimes\vel-\vel'\otimes\vel'+\nabla\dir\odot\nabla\dir-\nabla\dir'\odot\nabla\dir'\bigr):\nabla\vel^\sharp\,\dd\Dom,\\
J_2&=\int_\Dom\bigl((\vel'\cdot\nabla)\dir'-(\vel\cdot\nabla)\dir+\Phi(\dir')-\Phi(\dir)\bigr)\cdot(\dir^\sharp-\Delta\dir^\sharp)\,\dd\Dom.
\end{align*}
Using $\vel\otimes \vel - \vel' \otimes \vel' = (\vel-\vel')\otimes \vel + \vel'  \otimes (\vel - \vel')$ and H\"older's inequality, we obtain
\begin{align*}
|J_1|&\leq\|\vel'-\vel\|_{L^4}\|\nabla\vel^\sharp\|_{L^2}\bigl(\|\vel'\|_{L^4}+\|\vel\|_{L^4}\bigr)\\
&\quad+\|\nabla(\dir'-\dir)\|_{L^4}\|\nabla\vel^\sharp\|_{L^2}\bigl(\|\nabla\dir'\|_{L^4}+\|\nabla\dir\|_{L^4}\bigr),\\
|J_2|&\leq\Bigl(\|\vel'-\vel\|_{L^4}\|\nabla\dir'\|_{L^4}+\|\vel\|_{L^4}\|\nabla(\dir'-\dir)\|_{L^4}+\|\Phi(\dir')-\Phi(\dir)\|_{L^2}\Bigr)\\
&\qquad\times\bigl(\|\dir^\sharp\|_{L^2}+\|\Delta\dir^\sharp\|_{L^2}\bigr).
\end{align*}

%\todo{E: if you want you can use Rem \ref{rem:telescope young} and move the latter to Sec. 1 or 2 (I refer to it in Sec. 3)}
\noindent
In addition, the polynomial definition of $\Phi$ gives the pointwise inequality
\begin{equation*}
|\Phi(\dir')-\Phi(\dir)|\leq C|\dir'-\dir|\bigl(1+|\dir'|^{2k}+|\dir|^{2k}\bigr).
\end{equation*}
For $k\geq1$, H\"older's inequality gives
\begin{equation*}
\|\Phi(\dir')-\Phi(\dir)\|_{L^2}
\leq C\|\dir'-\dir\|_{L^4}\bigl(1+\|\dir'\|_{L^{8k}}^{2k}+\|\dir\|_{L^{8k}}^{2k}\bigr).
\end{equation*}
Gathering the estimates on $J_1$ and $J_2$, and using $H^{1/2}(\Dom)\hookrightarrow L^4(\Dom)$, the embeddings $H^1(\Dom)\hookrightarrow L^{8k}(\Dom)$ for $k\geq1$, and the embedding for $X_{3/4}$ above, yields
\begin{equation}\label{eq:locWP-cond2-estimateF}
\begin{split}
\|F(\vel,\dir)-F(\vel',\dir')\|_{V'}
&\leq C\|(\vel',\dir')-(\vel,\dir)\|_{X_{3/4}}\\
&\quad\times\bigl(1+m^{2k}+\|(\vel',\dir')\|_{X_{3/4}}+\|(\vel,\dir)\|_{X_{3/4}}\bigr).
\end{split}
\end{equation}
For the multiplicative noise, the Hilbert--Schmidt square sum and the contractivity of $\proj$ give
\begin{align*}
\|G(\vel',\dir')-G(\vel,\dir)\|_{\mathcal L_2(\mathcal U,H)}^2
&\leq\|(g_{1,n}(\vel',\dir')-g_{1,n}(\vel,\dir))_{n\geq1}\|_{L^2(\Dom;\ell^2)}^2\\
&\quad+\|(g_{2,n}(\dir')-g_{2,n}(\dir))_{n\geq1}\|_{H^1(\Dom;\ell^2)}^2.
\end{align*}
The $L^2$ terms are bounded by $C\|(\vel',\dir')-(\vel,\dir)\|_H^2$. Since $g_2$ is globally Lipschitz and differentiable, its derivative is uniformly bounded as an operator from $\R^2$ into $\ell^2(\R^2)$. The Lipschitz assumption on $\nabla g_2$, followed by the chain rule, therefore gives
\begin{align*}
&\|(\nabla g_{2,n}(\dir')\cdot\nabla\dir'-\nabla g_{2,n}(\dir)\cdot\nabla\dir)_{n\geq1}\|_{L^2(\Dom;\ell^2)}\\
&\qquad\leq C\|\dir'-\dir\|_{L^4}\|\nabla\dir'\|_{L^4}+C\|\nabla(\dir'-\dir)\|_{L^2}\\
&\qquad\leq C\|\dir'-\dir\|_{H^1}\bigl(1+\|\dir'\|_{H^{3/2}}\bigr).
\end{align*}
Consequently,
\begin{equation}\label{eq:locWP-cond2-estimateG}
\begin{split}
\|G(\vel',\dir')-G(\vel,\dir)\|_{\mathcal L_2(\mathcal U,H)}
\leq C\|(\vel',\dir')-(\vel,\dir)\|_{X_{3/4}}\bigl(1+\|(\vel',\dir')\|_{X_{3/4}}+\|(\vel,\dir)\|_{X_{3/4}}\bigr).
\end{split}
\end{equation}
Combining \eqref{eq:locWP-cond2-estimateF} and \eqref{eq:locWP-cond2-estimateG} proves Condition~\ref{cond:Assumption-6.1-second} with $L_m=C(1+m^{2k})$, $r=1$, $\beta_1=3/4$ and $\rho_1=1$, since $(2\beta_1-1)(\rho_1+1)=1$. The formulation with the real interpolation norm in \cite[Assumption~6.1]{agresti2025nonlinear} follows from $(V',V)_{3/4,1}\hookrightarrow[V',V]_{3/4}$.

As such, the conditions of Theorem 6.2 of \cite{agresti2025nonlinear} are fulfilled, which shows the existence of a maximal solution to the Cauchy problem.

\QED

%Using the fact that
%\begin{equation}
%    \nabla\cdot (\nabla\dir\odot\nabla\dir) = \sum_{i = 1}^2 \nabla\cdot (\nabla\dir_i \otimes \nabla\dir_i) = \sum_{i = 1}^2 \nabla \dir_i \Delta \dir_i + \nabla \sum_{1 \leq i,j \leq 2} \frac{1}{2} (\partial_j \dir_i)^2,
%\end{equation}

\subsection{Global well-posedness}

Assume now that Assumption \ref{ass:loc well-posed-glob} holds. To prove the second point of Proposition \ref{prop:loc well-posed L2}, we use Theorem 6.4 of \cite{agresti2025nonlinear}. Since we already proved the two conditions used in Theorem 6.2, we only need to prove the additional following one,
\begin{con}
    There exist $\theta' > 0$ and $M' \geq 0$, such that, for all $(\vel,\dir) \in V$, 
    \begin{equation}
        \langle (\vel,\dir), A(\vel,\dir) - F(\vel,\dir) \rangle_{V \times V'} - \frac{1}{2} \|B(\vel,\dir) + G(\vel,\dir)\|_{\mathcal{L}_2(\mathcal{U},H)}^2 \geq \theta' \|(\vel,\dir)\|_{V}^2 - M' (\|(\vel,\dir)\|_H^2 + 1).
    \end{equation}
\end{con}

\emph{Proof:} We first perform the following calculation for smooth fields satisfying the boundary conditions. Smooth solenoidal Dirichlet fields are dense in $\mathbb H^1_0$, and smooth Neumann fields are dense in the director component of $V$ by Lemma~\ref{lemma:Kadlec}. Then
\begin{equation*}
    \begin{split}
    \langle (\vel,\dir), - F(\vel,\dir) \rangle_{V \times V'} &=
    \Big(\proj[(\vel \cdot \nabla)\vel + \nabla \cdot (\nabla\dir\odot\nabla\dir)], \vel\Big)_{L^2(\Dom, \R^2)}
    +
    \Big((\vel \cdot \nabla)\dir + \Phi(\dir), \dir - \Delta \dir\Big)_{L^2(\Dom, \R^2)}
    \\
    &=
    \int_\Dom
    \Big[
        \nabla \cdot (\nabla\dir\odot\nabla\dir) \cdot \vel
        -
        (\vel \cdot \nabla)\dir \cdot \Delta \dir
    \Big]\,\dd\Dom
    +
    \int_{\Dom} \sum_{i=0}^k a_i |\dir|^{2i+2}\,\dd\Dom
    \\
    &\quad
    +
    \int_{\Dom} \sum_{i=0}^k a_i |\dir|^{2i} |\nabla \dir|^2\,\dd\Dom
    +
    \int_{\Dom} \sum_{i=1}^k 2i\,a_i |\dir|^{2i-2}
    \sum_{\ell=1}^d (\dir\cdot \partial_\ell \dir)^2\,\dd\Dom .
\end{split}
\end{equation*}
In addition, using the identity
\begin{equation}
    \nabla \cdot (\nabla\dir\odot\nabla\dir) = \sum_i \frac{1}{2} \nabla |\nabla \dir_i|^2 + \nabla \dir_i \Delta \dir_i,
\end{equation}
we infer that, since $u$ is divergence-free,
\begin{equation}
    \int_\Dom \Big[\nabla \cdot (\nabla\dir\odot\nabla\dir) \cdot \vel - (\vel \cdot \nabla)\dir \cdot \Delta \dir \Big]\d \Dom = \sum_i \int_\Dom \Big[(\nabla \dir_i \Delta \dir_i) \cdot \vel - (\vel\cdot \nabla\dir_i) \Delta \dir_i\Big]\d \Dom = 0.
    \label{eq: cancellation proof GWP}
\end{equation}
Furthermore, since $a_k>0$, the leading-order part of the polynomial controls the lower-order terms. More precisely, for every $\xi\in(0,a_k)$ there exists $C_\xi>0$ such that, pointwise,
\begin{equation}\label{eq:poly-coercivity-pointwise}
\begin{split}
& \sum_{i=0}^k a_i | \dir |^{2i+2} + \sum_{i=0}^k  a_i | \dir |^{2i} | \nabla \dir |^2 + \sum_{i=1}^k 2i \, a_i | \dir |^{2i-2} \sum_{ \ell = 1 }^d(\dir \cdot \partial_\ell \dir)^2 \\
&\qquad \geq (a_k-\xi)|\dir|^{2k+2} +
(a_k-\xi)| \dir |^{ 2k }| \nabla \dir |^2
- C_\xi \bigl( 1 + | \nabla \dir |^2 \bigr).
\end{split}
\end{equation}
Indeed, the leading-order terms are positive, while the lower-order terms are controlled by Young's inequality. Integrating \eqref{eq:poly-coercivity-pointwise} and using the cancellation in \eqref{eq: cancellation proof GWP}, we obtain
\begin{equation*}
\begin{split}
\langle (\vel,\dir), A(\vel,\dir) - F(\vel,\dir) \rangle_{V \times V'} &\geq \| ( \vel , \dir)\|_V^2
- \|(\vel,\dir)\|_H^2 + (a_k-\xi) \int_{\Dom}| \dir|^{2k+2}\,\dd\Dom\\ &\quad + (a_k-\xi ) \int_{\Dom} | \dir |^{ 2k }| \nabla \dir|^2 \, \dd \Dom - C_\xi \bigl ( 1 + \| \nabla \dir \|_{L^2}^2 \bigr).
\end{split}
\end{equation*}
In particular, discarding the nonnegative leading-order terms gives
\begin{equation}\label{eq:coercivity-A-minus-F}
    \langle (\vel,\dir), A(\vel,\dir) - F(\vel,\dir) \rangle_{V \times V'} \geq
    \|(\vel,\dir)\|_V^2 -  C_0 \bigl( 1+ \|(\vel, \dir )\|_H^2\bigr),
\end{equation}
for some constant $C_0>0$. Additionally, the global Lipschitz assumptions on $g_1,g_2$ and the bounded derivative of $g_2$ imply
\begin{equation*}
\|G(\vel,\dir)\|_{\mathcal L_2(\mathcal U,H)}^2\leq C\bigl(1+\|(\vel,\dir)\|_H^2\bigr).
\end{equation*}
Indeed, the $L^2$ terms have linear growth, while $\|(\nabla g_{2,n}(\dir)\cdot\nabla\dir)_{n\geq1}\|_{L^2(\Dom;\ell^2)}\leq C\|\nabla\dir\|_{L^2}$. Young's inequality in the Hilbert--Schmidt space and the direct bound on $B$ above show that, for every $\xi'>0$, there exists $M_{\xi'}>0$ such that
\begin{equation*}
\|B(\vel,\dir)+G(\vel,\dir)\|_{\mathcal L_2(\mathcal U,H)}^2
\leq(\nu+\xi')\|(\vel,\dir)\|_V^2+M_{\xi'}\bigl(1+\|(\vel,\dir)\|_H^2\bigr).
\end{equation*}
Combining this with \eqref{eq:coercivity-A-minus-F} and choosing $\xi'=1-\nu/2>0$, we obtain
\begin{equation}
\begin{split}
&\langle(\vel,\dir),A(\vel,\dir)-F(\vel,\dir)\rangle_{V\times V'}-\frac12\|B(\vel,\dir)+G(\vel,\dir)\|_{\mathcal L_2(\mathcal U,H)}^2\\
&\qquad\geq\frac{2-\nu}{4}\|(\vel,\dir)\|_V^2-M'\bigl(1+\|(\vel,\dir)\|_H^2\bigr)
\end{split}
\end{equation}
for a constant $M'\geq0$. Since $\nu<2$, the required coercivity constant $\theta'=(2-\nu)/4$ is strictly positive.

Consequently, the assumptions of the Theorem 6.4 of \cite{agresti2025nonlinear} are met, therefore there exists a unique global solution to the problem \ref{eq: abstract cauchy problem}.
    
\QED

\section{$L^p$-$L^q$-setting for the torus $\T^d$}

\noindent 
In this section, we study the liquid crystal model \eqref{eq:MLC0} in an $L^p$-$L^q$-setting. The spatial domain will be the $d$-dimensional torus, i.e. $\Dom\coloneqq\T^d$, and we will assume the same for the coefficients as in the $L^2$-setting, namely Assumption \ref{ass:loc well-posed2}. 

For the spatial variable, we utilise the following Banach  spaces, for  $s\in [-1,2)$ and $p,q>2$:
\begin{equation}\label{eq:def X0X1}
    X_0\coloneqq \mathbb{H}^{-s,q}(\mathbb{T}^d)\times H^{1-s,q}(\mathbb{T}^d;\R^d),\qquad X_1\coloneqq \mathbb{H}^{2-s,q}(\mathbb{T}^d)\times H^{3-s,q}(\mathbb{T}^d;\R^d).  
\end{equation}
Here, for $\delta\in\R$ and $q>1$, $\mathbb{H}^{\delta,q}(\mathbb{T}^d)$ is defined as 
$$
\mathbb{H}^{\delta,q}(\mathbb{T}^d)\coloneqq \{f\in {H}^{\delta,q}(\mathbb{T}^d;\R^d): \div f=0 \text{ in }\mathscr{D}'(\T^d) \}. 
$$
For details, we refer to \cite[\S 2.1]{AV24SNS}. 

For $\theta\in(0,1)$, the corresponding complex interpolation space is given by
\begin{equation}\label{eq:complexinterpolLp}
    X_\theta= \mathbb{H}^{-s+2\theta,q}(\mathbb{T}^d)\times H^{1-s+2\theta,q}(\mathbb{T}^d;\R^d) 
\end{equation}
and the trace space for $\kappa\geq 0$ is given by 
\begin{equation}\label{eq:tracespaceLp}
X_{1-\frac{1+\kappa}{p},p}= \mathbb{B}_{qp}^{-s+2-\frac{2(1+\kappa)}{p}}(\mathbb{T}^d;\R^d)\times B_{qp}^{-s+3-\frac{2(1+\kappa)}{p}}(\mathbb{T}^d;\R^d).
\end{equation}

As before, we study \eqref{eq:MLC0} as a stochastic evolution equation:
\begin{equation*} 
  \begin{cases}
    &\d (\vel,\dir)+A(\vel,\dir) \d t= F(\vel,\dir)\d t +\big(B(\vel,\dir)+G(\vel,\dir)\big) \d W,\\
     &(\vel,\dir)(0)=(\vel_0,\dir_0), 
  \end{cases}
\end{equation*}
where $W\coloneqq (W^n)_{n\geq 1}$ is  an  $\ell^2$-cylindrical Brownian motion   and  
\begin{align*}
&A(\vel,\dir)\coloneqq (-\proj\Delta \vel,-\Delta \dir),\\
&B(\vel,\dir)\coloneqq \big(\mathbb{P}(b_{1,n}\cdot\nabla)\vel, (b_{2,n}\cdot\nabla)\dir\big)_{n\geq 1},\\
 & F(\vel,\dir)\coloneqq (-\mathbb{P} \nabla \cdot  (\vel \otimes \vel  + \nabla \dir \odot \nabla \dir ), - (\vel \cdot \nabla)\dir-\Phi(\dir)),\\
 & G(\vel,\dir)\coloneqq \big(\mathbb{P} g_{1,n}(\vel ,\dir), g_{2,n}(\dir)\big)_{n\geq 1}.
\end{align*}

{In the current setting, $\proj$ appearing in $A$ denotes the bounded Helmholtz--Leray projection from $H^{\delta,q}(\T^d;\R^d)$ onto $\mathbb{H}^{\delta,q}(\T^d)$ (see \cite[\S 2.1, (3.7)]{AV24SNS}).}   
Note that the divergence free condition and the boundary conditions are accounted for by the Helmholtz--Leray projection and the choice of the ground spaces.  

Throughout what follows, we will be using the following identity in distributional sense: 
\begin{equation}\label{eq:div id}
  \text{if $\nabla\cdot \vel =0$, then }   (\vel  \cdot \nabla ) \veltwo = \nabla \cdot (\vel  \otimes \veltwo). 
\end{equation}
% Furthermore, for $A,B\in \R^{d\times d}$, we define $A\odot B\in \R^{d\times d}$ through
% \[
% (A\odot B)_{j,k}\coloneqq \sum_{i=1}^d A_{ij}B_{ik}, \qquad 1\leq j,k\leq d.
% \]
% In particular,  we have
% \[
% \nabla\dir\odot \nabla\dir=\sum_{i=1}^d \nabla \dir_i\otimes \nabla \dir_i.
% \]
\begin{rem}\label{rem:telescope young}
The following estimate will be frequently used for the $\Phi$-term appearing in $F$.
For $x,y\in\R^d$ and $m\in\N$, the mean-value formula gives
\[
\big||x|^m x-|y|^m y\big|
\leq
C_m |x-y|\big(|x|^m+|y|^m\big).
\]
Therefore,
\[
|\Phi(\dir)-\Phi(\dirtwo)|
\lesssim
|\dir-\dirtwo|
\sum_{i=0}^k\big(|\dir|^{2i}+|\dirtwo|^{2i}\big)
\lesssim
|\dir-\dirtwo|
\big(1+|\dir|^{2k}+|\dirtwo|^{2k}\big). 
\]
\end{rem}

\begin{prop}[Stochastic maximal regularity of the linear part in  \eqref{eq:MLC0}]\label{prop:SMR}
Let $s\in (-1,2)$ and $q\in[2,\infty)$. %,  $p,q\in(2,\infty)$ and $\kappa\in[0,\frac{p}{2}-1)$. 
Let $X_0$ and $X_1$ be defined by \eqref{eq:def X0X1} and let  Assumption \ref{ass:loc well-posed2} be satisfied. 
%Assume the hypotheses of Proposition~\ref{prop:loc well-posed Lp} (in particular $\Dom=\T^d$ and Assumption~\ref{ass:loc well-posed2}).
Then for every $r\in(2,\infty)$ and every $\alpha\in[0,\frac r2-1)$ the linear pair $(A,B)$ satisfies stochastic maximal regularity:
\[
(A,B)\in \mathcal{SMR}^{\bullet}_{r,\alpha}
\quad\text{in the sense of \cite[Definition~3.8]{agresti2025nonlinear}.}
\] 
\end{prop}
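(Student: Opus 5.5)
The proof will exploit that on the torus with spatially constant coefficients $b_{i,n}$ the linear system decouples and is diagonal with respect to Fourier modes. The Helmholtz--Leray projection $\proj$ is a Fourier multiplier. It commutes with $\Delta$ and with each $(b_{1,n}\cdot\nabla)$, and it acts as the identity on $\mathbb{H}^{\delta,q}(\T^d)$. Hence, restricted to the divergence-free component, $A$ and $B$ reduce to $-\Delta$ and $((b_{1,n}\cdot\nabla))_n$ acting on each Cartesian component of $\vel$. The pair $(A,B)$ is then block-diagonal: one block on $\mathbb{H}^{-s,q}$ for $\vel$ and one on $H^{1-s,q}(\T^d;\R^d)$ for $\dir$, with no coupling. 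Stochastic maximal regularity of a direct sum follows from that of each block, so it suffices to treat the scalar problem
\[
\d w - \Delta w\,\d t = \sum_{n\ge1}(b_n\cdot\nabla)w\,\d W^n
\]
on $H^{\gamma,q}(\T^d)$, with $\gamma\in\{-s,\,1-s\}$, $b_n=b_{i,n}$ and $\sum_n(b_n\cdot\xi)^2\le\nu|\xi|^2$, $\nu<2$.

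\textbf{Regularity shift.} I will reduce to $\gamma=0$ (or any fixed level) by the isomorphism $(1-\Delta)^{\gamma/2}\colon H^{\gamma+2,q}\to H^{2,q}$. Since all operators involved are Fourier multipliers, this map commutes with $A$ and with $B$. Consequently $\mathcal{SMR}^\bullet_{r,\alpha}$ on $(H^{\gamma,q},H^{\gamma+2,q})$ is equivalent to the same property on $(L^q,H^{2,q})$. The projection onto $\mathbb{H}$ is a bounded complemented projection commuting with everything, so it also passes through.

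\textbf{Constant-coefficient transport noise.} The standard route is a pathwise shift, in the spirit of \cite{agresti2025nonlinear} and \cite{AV24SNS}. I set $\tilde w(t,x)=w(t,x-\sum_n b_nW^n_t)$. By the Itô--Wentzell formula the transport noise is removed and the Itô correction produces the drift
\[
\tfrac12\sum_n(b_n\cdot\nabla)^2=\tfrac12\,\mathrm{div}(Q\nabla), \qquad Q=\sum_n b_n\otimes b_n\le\nu I.
\]
The transformed equation is therefore $\d\tilde w=(\Delta-\tfrac12\mathrm{div}\,Q\nabla)\tilde w\,\d t+\tilde f\,\d t+\tilde g\,\d W$, where $\tilde f,\tilde g$ are the shifted inhomogeneities. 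Its operator is uniformly elliptic with constant coefficients, since $I-\tfrac12Q\ge(1-\nu/2)I>0$. For this operator, deterministic and stochastic maximal $L^r_\alpha$-regularity on $L^q(\T^d)$ are classical: it has a bounded $H^\infty$-calculus of angle $<\pi/2$ on $L^q$, $q\ge2$. Hence the van Neerven--Veraar--Weis theorem, in its weighted version (stated in \cite{agresti2025nonlinear}), gives $\mathcal{SMR}^\bullet_{r,\alpha}$ for all $r\in(2,\infty)$ and $\alpha\in[0,r/2-1)$. The shift by a continuous path is an isometry on every $H^{\delta,q}(\T^d)$ and preserves progressive measurability, so the estimates transfer back to $w$.

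\textbf{Main obstacle.} I expect the hardest step to be the rigorous justification of the shift at the level of $\mathcal{SMR}^\bullet$. This requires applying Itô--Wentzell to the strong solution, and checking that the definition (a priori estimates on stopped random intervals $[0,\tau]$, uniformly in $\tau$) is preserved. If this becomes delicate, my fallback is to verify the estimate directly in Fourier variables. On $\T^d$ each mode $\hat w_k$ solves the scalar SDE $\d\hat w_k=-|k|^2\hat w_k\,\d t+i\sum_n(b_n\cdot k)\hat w_k\,\d W^n+\ldots$, whose explicit solution shows that the noise only produces the phase factor above. One could then conclude via the known characterization in \cite{agresti2025nonlinear}, by which constant-coefficient second-order operators with transport noise satisfying the parabolicity condition $\nu<2$ belong to $\mathcal{SMR}^\bullet_{r,\alpha}$ on $H^{\gamma,q}$, $q\in[2,\infty)$. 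In that case the proof consists of checking the hypotheses of that result blockwise.
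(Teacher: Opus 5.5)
Your argument takes a genuinely different route from the paper's and is viable, but one step fails as written.

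\textbf{Comparison with the paper.} The paper does no computation. It checks that spatially constant $b_{i,n}$ with $\nu<2$ satisfy the hypotheses of \cite[Theorem~3.2]{AV24SNS} (turbulent Stokes pair, $\delta=1-s$) and of \cite[Theorem~5.2]{AV24b} (the pair $(-\Delta,(b_{2,n}\cdot\nabla)_n)$ with $\sigma=1-s$, applied componentwise), both with $p=r$ and $\kappa=\alpha$. These theorems already contain the time-regularity part of $\mathcal{SMR}^\bullet_{r,\alpha}$. Your fallback of citing such a result blockwise is exactly this.

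Your main route is: decoupling, a Bessel lift to $L^q$, the random translation by $\xi_t=\sum_n b_nW^n_t$, and van Neerven--Veraar--Weis for $\lambda-\mathrm{div}((I-\tfrac12 Q)\nabla)$. You need $\lambda>0$ because this operator is not invertible on $\T^d$. The route is self-contained, works for every $s\in\R$, and shows transparently why $\nu<2$ is the right condition. It does not extend to $x$-dependent $b_n$, which the cited theorems allow.

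The It\^o--Wentzell step is cleanest done mode by mode, via the scalar It\^o product rule for $e^{-ik\cdot\xi_t}\hat w_k(t)$; this is rigorous for strong solutions. You must also keep the cross-variation of $g\,\d W$ with $\d\xi$. It adds the drift $-\sum_n(b_n\cdot\nabla)g_n(\cdot-\xi_t)$ to $\tilde f$. This term is harmless, since its $X_0$-norm is bounded by a multiple of $(\sum_n|b_n|^2)^{1/2}\|g\|_{\gamma(\ell^2,X_{1/2})}$, but it must be included.

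\textbf{The failing step.} Your claim that ``the estimates transfer back to $w$'' is false for the weighted bounds of $w$ in $L^r(\Omega;H^{\theta,r}(0,\tau;X_{1-\theta}))$, $\theta\in[0,\frac12)$, which are part of $\mathcal{SMR}^\bullet_{r,\alpha}$. Translation at a fixed time is an isometry. Hence the $L^r_\alpha(X_1)$ bound, continuity in the trace space and uniqueness do transfer. But $t\mapsto\xi_t$ is only $C^{1/2-}$. Write $S_hv=v(\cdot-h)$ and $w(t)=S_{-\xi_t}\tilde w(t)$. The term $(S_{-\xi_t}-S_{-\xi_s})\tilde w(s)$ only satisfies
$\|(S_{-\xi_t}-S_{-\xi_s})\tilde w(s)\|_{X_{1-\theta}}\lesssim|\xi_t-\xi_s|^{2\theta}\|\tilde w(s)\|_{X_1}$.
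The right-hand side is of size $|t-s|^{\theta-\varepsilon}$ times a random H\"older constant. This yields at best $H^{\theta-\varepsilon,r}$ in time, and the random constant cannot be controlled in $L^r(\Omega)$ by the data.

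\textbf{Repair.} First establish the $L^r(\Omega;L^r_\alpha(0,\tau;X_1))$ bound for $w$. Then regard $w$ as the strong solution of the problem for $(A,0)$ with noise $Bw+g\in L^r_\alpha(\gamma(\ell^2,X_{1/2}))$, which is also controlled by the data. Finally use $(A,0)\in\mathcal{SMR}^\bullet_{r,\alpha}$. This holds because $\lambda+A$ has a bounded $H^\infty$-calculus of angle $<\pi/2$ on $X_0$, and $X_0$ is isomorphic to a closed subspace of $L^q$ with $q\geq2$. With this repair your proof is complete.
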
%\todo{ E: I included $\alpha=0$, $q=2$}

\begin{proof} 
Fix $s \in (-1,2)$, $r\in(2,\infty)$ and $\alpha\in[0,\frac r2-1)$.

\noindent \textit{Velocity component.}
Set $\delta:=1-s$. Then $\delta \in (-1,2)$ and
$$
H^{-1+\delta, q} = H^{-s,q}, \qquad H^{1+\delta, q} = H^{2-s,q}.
$$
In our setting on $\T^d$, the linear velocity equation is the turbulent Stokes system with
$$
A_S=-P\Delta,   \qquad (B_{S,n}u)_{n \geq 1}=(P(b_{1,n}\cdot\nabla u))_{n \geq 1},
$$
where $P$ is the Helmholtz projection and $(b_{1,n})_{n\ge1}$ are constant divergence-free vectors
(Assumption \ref{ass:loc well-posed2}(1)).
With the choices $a^{ij}=\delta_{ij}$ and $c=h=0$, Assumption \ref{ass:loc well-posed2}(1) implies
\cite[Assumption~3.1(1)--(3)]{AV24SNS}; indeed, the stochastic parabolicity bound in
Assumption~\ref{ass:loc well-posed2}(1) is equivalent to \cite[Assumption~3.1(3)]{AV24SNS} (with $a^{ij}=\delta_{ij}$). Moreover, since the coefficients $b_{1,n}$ are constant, the coefficient-regularity assumptions in
\cite[Assumption 3.1]{AV24SNS} are satisfied for the above value of $\delta$. Hence \cite[Theorem~3.2]{AV24SNS} (with $p=r$, $\kappa=\alpha$, and this $\delta$) yields stochastic maximal
$L^r$-regularity (including the time-regularity part) for the Stokes couple in the scale
$$
X_0^u=\mathbb{H}^{-s,q},\qquad X_1^u=\mathbb{H}^{2-s,q}.
$$

\noindent\textit{Director component.} Set $\sigma:=1-s$. Then $\sigma \in (-1,2)$ and
$$
H^{\sigma, q} = H^{1-s,q}, \qquad H^{2+\sigma, q} = H^{3-s,q}.
$$
Consider the scalar operator pair
$$
A_d=-\Delta,\qquad (B_{d,n}\varphi)_{n \geq 1}=(b_{2,n} \cdot \nabla \varphi)_{n \geq 1}.
$$
%\todo{E: I replaced ref   \cite[Theorem~1.2]{AV24b} by \cite[Theorem~5.2]{AV24b}, the latter needed for the time reg part of max reg  }
By Assumption~\ref{ass:loc well-posed2}(1), and since the coefficients are constant,  the conditions in  \cite[Assumption~5.1]{AV24b} are satisfied 
(with $a^{ij}=\delta_{ij}$ and $b_n^j=b_{2,n}^j$). 
Therefore \cite[Theorem~5.2]{AV24b} (with $p=r$, $\kappa=\alpha$, and $\sigma=1-s$) yields stochastic maximal
$L^r$-regularity (including the time-regularity part) for $(-\Delta,(b_{2,n}\cdot\nabla)_n)$ on the scale
$$
X_0^d=H^{1-s,q},  \qquad   X_1^d=H^{3-s,q}.
$$

% Since the coefficients are constant, Assumption \ref{ass:loc well-posed2}(i) implies \cite[Assumption~1.1]{AV24b}
% (with $a^{ij}=\delta_{ij}$ and $b_n^j=b_{j,n}$), and in particular the stochastic parabolicity condition
% \cite[(1.2)]{AV24b} holds (again by Assumption~\ref{ass:loc well-posed2}(i)).
% Therefore \cite[Theorem~1.2]{AV24b} (with $p=r$, $\kappa=\alpha$, and $\sigma=1-s$) yields stochastic maximal
% $L^r$-regularity for $(-\Delta,(b_n\cdot\nabla)_n)$ on the scale
% $$
% X_0^d=H^{1-s,q},  \qquad   X_1^d=H^{3-s,q}.
% $$
Applying the scalar result componentwise gives the same conclusion for $\R^d$-valued directors.
Combining the velocity and director components, we obtain $(A,B)\in \mathcal{SMR}^{\bullet}_{r,\alpha}$
in the sense of \cite[Definition~3.8]{agresti2025nonlinear}.
\end{proof}

In the statements of the next three local well-posedness results,  we will distinguish between the ranges $s\in [1,2)$ and $s\in (-1,1)$, since the techniques used to handle these ranges differ substantially. 

\begin{prop}[Local well-posedness, $1\leq s<2$]\label{prop:loc well-posed Lp}
Let $s\in [ 1,2)$,  $p,q\in(2,\infty)$ and $\kappa\in[0,\frac{p}{2}-1)$. Let $X_0$ and $X_1$ be defined by \eqref{eq:def X0X1} and let  Assumption \ref{ass:loc well-posed2} be satisfied.   %\kappa range is only dictated by the (sub)criticality condition! 
Suppose that 
\begin{align}
&  1-\frac{1+\kappa}{p}< \frac{d-(s-1)q}{4q}+\frac{s}{2}\leq 1-\frac{1+\kappa}{2p} \label{eq:subcrit1}\\
&  \frac{1}{k}>s-3+\frac{d}{q}+2\big(\frac{1+\kappa}{p}\big),\label{eq:subcrit2}\\
&    (s-1)q\leq d.%\in[\frac{d}{2},\infty]. 
\end{align} 
  
% {I think here we need strict ineq. as we can not allow $q=\infty$. In that case we consider the operator on the ground space $\rL^\infty$ and this does not allow the use of any abstract tools such as maximal regularity, i think (Tarek) }

Then, for all $(\vel_0,\dir_0)\in L^0_{\mathscr F_0}(\Omega;X_{1-\frac{1+\kappa}{p},p})$, \eqref{eq:MLC0} has a unique maximal solution $(\vel,\dir,\sigma)$ with $\sigma>0$ a.s. Moreover,  a.s.: 
\begin{align*} 
(\vel,\dir)\in H_\loc^{\theta,p}(0,\sigma,w_\kappa;X_{1-\theta})\cap C([0,\sigma);X_{1-\frac{1+\kappa}{p},p})  \quad \text{ for all } \theta\in [0,1/2),
\end{align*}
where $X_{1-\theta}$ and $X_{1-\frac{1+\kappa}{p},p}$ are given by \eqref{eq:complexinterpolLp} and \eqref{eq:tracespaceLp}, respectively. 
\end{prop}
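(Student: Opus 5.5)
The plan is to apply the local well-posedness theorem for critical semilinear SPDEs in \cite{agresti2025nonlinear} (Theorem 4.8 there, under Assumption 4.1/the $(\mathrm{H}\mathrm{f})$--$(\mathrm{H}\mathrm{G})$ hypotheses). Its conclusion gives a unique maximal $L^p_\kappa$-solution with $\sigma>0$ a.s. and the stated regularity $H^{\theta,p}_{\loc}(0,\sigma,w_\kappa;X_{1-\theta})\cap C([0,\sigma);X_{1-\frac{1+\kappa}{p},p})$ for $\theta\in[0,1/2)$. The linear part is already handled: by Proposition~\ref{prop:SMR} with $r=p$, $\alpha=\kappa$, we have $(A,B)\in\mathcal{SMR}^\bullet_{p,\kappa}$. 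The remaining work is to verify local Lipschitz bounds for $F$ in $X_0$ and $G$ in $\gamma(\ell^2,X_{1/2})$. Each bound must hold on balls of the trace space, with right-hand sides in the form
\[
\|F(Y)-F(Y')\|_{X_0}\lesssim_n \sum_j (1+\|Y\|_{X_{\beta_j}}^{\rho_j}+\|Y'\|_{X_{\beta_j}}^{\rho_j})\|Y-Y'\|_{X_{\beta_j}} + \|Y-Y'\|_{X_{1-\frac{1+\kappa}{p},p}},
\]
and with (sub)criticality $\rho_j^{-1}\frac{1+\kappa}{p}\le \frac{\rho_j+1}{\rho_j}(1-\beta_j)$ and $\beta_j\in(1-\frac{1+\kappa}{p},1)$.

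\textbf{Step 1: the bilinear terms.} These are $\mathbb P\nabla\cdot(\vel\otimes\vel)$, $\mathbb P\nabla\cdot(\nabla\dir\odot\nabla\dir)$ and $(\vel\cdot\nabla)\dir=\nabla\cdot(\vel\otimes\dir)$ by \eqref{eq:div id}. In $X_0$ the velocity component has regularity $-s\le -1$, so $\|\nabla\cdot(\vel\otimes\vel)\|_{H^{-s,q}}\lesssim\|\vel\otimes\vel\|_{H^{1-s,q}}$, and since $1-s\le0$, Sobolev embedding gives $L^{r}\hookrightarrow H^{1-s,q}$ with $\frac{d}{r}=\frac dq+s-1$, which is admissible because $(s-1)q\le d$. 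Hölder then reduces this to $\|\vel\|_{L^{2r}}^2$, and $H^{-s+2\beta,q}\hookrightarrow L^{2r}$ dictates $\beta_1$ via $-s+2\beta_1-\frac dq=-\frac{d}{2r}$. This gives $\beta_1=\frac{d-(s-1)q}{4q}+\frac s2$, precisely the quantity in \eqref{eq:subcrit1}, with $\rho_1=1$. The $\nabla\dir\odot\nabla\dir$ term and the term $\nabla\cdot(\vel\otimes\dir)$ in $H^{1-s,q}$ work identically, because $\nabla\dir$ lives one derivative higher in the scale, matching $\vel$ exactly (criticality of $(\alpha,\beta)=(1/2,0)$). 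For the director term, $1-s\le 0$ again permits the same negative-order product estimate. Condition \eqref{eq:subcrit1}, with its two inequalities, is exactly $\beta_1>1-\frac{1+\kappa}{p}$ together with the criticality constraint $2\beta_1-1\le 1-\frac{1+\kappa}{p}\cdot$ for $\rho=1$, i.e. $\frac{1+\kappa}{p}\le 2(1-\beta_1)$.

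\textbf{Step 2: the polynomial term $\Phi$.} Using Remark~\ref{rem:telescope young}, I would bound $\|\Phi(\dir)-\Phi(\dirtwo)\|_{H^{1-s,q}}$ by its $L^{\tilde r}$ norm, again using $1-s\le0$, and then apply Hölder with exponent $2k+1$. The factors are placed in $H^{1-s+2\beta_2,q}$, with $\rho_2=2k$. If the trace space $B^{3-s-\frac{2(1+\kappa)}p}_{qp}$ already embeds in $L^\infty$, one can instead treat $\Phi$ as a lower-order locally Lipschitz map on trace-space balls. I expect that \eqref{eq:subcrit2} is exactly the subcriticality condition $\frac{1+\kappa}{p}<\frac{\rho_2+1}{\rho_2}(1-\beta_2)$ after optimizing $\beta_2$. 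Checking this exponent bookkeeping is the main obstacle, particularly when $k$ is large and the inequality is strict. My first attempt is to choose $\beta_2$ so that $H^{1-s+2\beta_2,q}\hookrightarrow L^{(2k+1)\tilde r}$ sharply and then verify algebraically that the resulting inequality is \eqref{eq:subcrit2}.

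\textbf{Step 3: the noise $G$.} Here one needs $G$ locally Lipschitz into $\gamma(\ell^2,\mathbb H^{1-s,q}\times H^{2-s,q})$. The velocity part has regularity $1-s\le0$, so global Lipschitzness of $g_1$ with $L^q(\ell^2)\hookrightarrow H^{1-s,q}(\ell^2)$ suffices. For the director part, $2-s\in(0,1]$, so I would use the $C^1$ regularity of $g_2$ with Lipschitz $Dg_2$ from Assumption~\ref{ass:loc well-posed2}(3). This gives Lipschitz bounds in $H^{1,q}(\ell^2)$ via the chain rule, and hence in $H^{2-s,q}$. The difference $Dg_2(\dir)\nabla\dir-Dg_2(\dirtwo)\nabla\dirtwo$ is controlled by $\|\dir-\dirtwo\|_{L^\infty}$ times $\|\nabla\dir\|_{L^q}$, which is bounded on trace-space balls, because $B^{3-s-\frac{2(1+\kappa)}p}_{qp}\hookrightarrow L^\infty\cap H^{1,q}$ under \eqref{eq:subcrit1}. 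Collecting Steps 1–3 verifies the hypotheses of \cite[Theorem 4.8]{agresti2025nonlinear} and proves the proposition.
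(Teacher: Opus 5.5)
Your treatment of the drift matches the paper's proof. It uses the embedding $L^{\lambda}\hookrightarrow H^{1-s,q}$ with $\frac d\lambda=\frac dq+s-1$, H\"older into $L^{2\lambda}$, the pair $(\beta,1)$ with $\beta=\frac{d-(s-1)q}{4q}+\frac s2$, and the pair $(\hat\beta,2k)$ for $\Phi$. The bookkeeping you left open in Step 2 does close. The embedding into $L^{\lambda(1+2k)}$ needs $\hat\beta\ge\frac{k}{1+2k}(s-1+\frac dq)$, and \eqref{eq:subcrit2} says exactly that this threshold lies strictly below $1-\frac{2k}{1+2k}\frac{1+\kappa}{p}$; the latter exceeds $1-\frac{1+\kappa}{p}$, so an admissible $\hat\beta$ exists. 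One small point: $(\vel\cdot\nabla)\dir$ must be estimated directly as a product in $L^\lambda$. The divergence form $\nabla\cdot(\vel\otimes\dir)$ does not help, since it would require $\vel\otimes\dir\in H^{2-s,q}$, which has positive order.

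The genuine gap is in Step 3, for the director noise. You claim that $B^{3-s-\frac{2(1+\kappa)}{p}}_{q,p}\hookrightarrow L^\infty\cap H^{1,q}$ under \eqref{eq:subcrit1}. You then bound $\|\nabla\dir\|_{L^q}$ on trace-space balls and treat $(Dg_2(\dir)-Dg_2(\dirtwo))\nabla\dir$ as a trace-Lipschitz term. That embedding does not follow from the hypotheses.

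\begin{itemize}
\item Equality in the upper bound of \eqref{eq:subcrit1} is allowed (the critical case). It is equivalent to $3-s-\frac{2(1+\kappa)}{p}=\frac dq$, and $B^{d/q}_{q,p}\not\hookrightarrow L^\infty$ for $p>2$.
\item Even strictly subcritically, the director trace regularity can be below $1$. For example, $d=3$, $s=1.9$, $q=\frac{300}{91}$, $\kappa=0$, $p=\frac{100}{9}$ satisfy all the hypotheses, yet $3-s-\frac2p=0.92<1$, so $\nabla\dir\notin L^q$ is possible on trace balls.
\end{itemize}

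The fix is the paper's device: put this cross term into the quadratic pair $(\beta,1)$ already used for the bilinear terms, rather than into the trace term. Estimate in $H^{1,\lambda}(\T^d;\ell^2)\hookrightarrow H^{2-s,q}(\T^d;\ell^2)$, which holds because $1-\frac d\lambda=2-s-\frac dq$ and $\lambda\le q$. Then bound the cross term in $L^\lambda$ by $L\|\dir-\dirtwo\|_{L^{2\lambda}}\|\nabla\dir\|_{L^{2\lambda}}\lesssim\|\dir-\dirtwo\|_{X_\beta}\|\dir\|_{X_\beta}$, which is admissible by \eqref{eq:subcrit1}. Equivalently, you can keep your $L^q$ route with $X_\beta$ norms instead of trace norms. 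Here $1-s+2\beta=\frac{3-s}{2}+\frac{d}{2q}$ is $\ge1$ because $(s-1)q\le d$, and is $>\frac dq$ by the upper bound in \eqref{eq:subcrit1}. Hence $H^{1-s+2\beta,q}\hookrightarrow L^\infty\cap H^{1,q}$.
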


%In addition, we will also consider local well-posedness for the following range of $s$. 

\begin{prop}[Local well-posedness, $0<s<1$]\label{prop:loc well-posed Lp subcritical}
Let $s\in (0,1)$, $p,q\in(2,\infty)$ and $\kappa\in[0,\frac{p}{2}-1)$. 
Let $X_0$ and $X_1$ be defined by \eqref{eq:def X0X1} and let Assumption \ref{ass:loc well-posed2} be satisfied. Assume additionally that
\[
(g_{1,n})_{n\geq1}\in C^3(\R^d\times\R^d;\ell^2(\R^d)),
\qquad
(g_{2,n})_{n\geq1}\in C^4(\R^d;\ell^2(\R^d)),
\]
and that
\[
\frac dq<2-s-\frac{2(1+\kappa)}p.
\]

Suppose that
\begin{align}
& \frac{1+s}{2}\leq 1-\frac{2k}{1+2k}\frac{1+\kappa}{p}, \label{eq:subcrit01-1}\\
& q\geq \frac{d}{\,3-s-\frac{8k}{1+2k}\frac{1+\kappa}{p}\,}. \label{eq:subcrit01-2}
\end{align}
Then, for all $(\vel_0,\dir_0)\in L^0_{\mathscr F_0}(\Omega;X_{1-\frac{1+\kappa}{p},p})$, \eqref{eq:MLC0} has a unique maximal solution $(\vel,\dir,\sigma)$ with $\sigma>0$ a.s. Moreover, a.s.
\begin{align*}
(\vel,\dir)\in H_\loc^{\theta,p}(0,\sigma,w_\kappa;X_{1-\theta})\cap C([0,\sigma);X_{1-\frac{1+\kappa}{p},p})
\qquad \text{for all } \theta\in [0,1/2),
\end{align*}
where $X_{1-\theta}$ and $X_{1-\frac{1+\kappa}{p},p}$ are given by \eqref{eq:complexinterpolLp} and \eqref{eq:tracespaceLp}, respectively.
\end{prop}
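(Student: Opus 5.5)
The plan is to apply the critical local well-posedness theorem of \cite{agresti2025nonlinear} (Theorem 4.8 there, for $(A,B)\in\mathcal{SMR}^\bullet_{p,\kappa}$ with locally Lipschitz nonlinearities satisfying (sub)criticality). The linear part is already handled: Proposition~\ref{prop:SMR}, applied with $r=p$ and $\alpha=\kappa$, gives $(A,B)\in\mathcal{SMR}^\bullet_{p,\kappa}$ on $X_0,X_1$ from \eqref{eq:def X0X1} for $s\in(0,1)$. The regularity statement $H^{\theta,p}_\loc(0,\sigma,w_\kappa;X_{1-\theta})\cap C([0,\sigma);X_{1-\frac{1+\kappa}{p},p})$ then follows from the same abstract theory. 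What remains is to verify the hypothesis on $F$ and $G$. We must find $\beta_j\in(1-\frac{1+\kappa}{p},1)$ and $\rho_j\ge0$ with
\[
\rho_j\Big(\beta_j-1+\tfrac{1+\kappa}{p}\Big)+\beta_j\le 1,
\]
together with local Lipschitz estimates of the form $\|F(Y)-F(Y')\|_{X_0}+\|G(Y)-G(Y')\|_{\gamma(\ell^2,X_{1/2})}\lesssim_n \sum_j(1+\|Y\|_{X_{\beta_j}}^{\rho_j}+\|Y'\|_{X_{\beta_j}}^{\rho_j})\|Y-Y'\|_{X_{\beta_j}}$ on balls of the trace space.

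For the quadratic terms I would split $F$ into the bilinear part and the polynomial part $\Phi$. The bilinear part is $\mathbb P\nabla\cdot(\vel\otimes\vel+\nabla\dir\odot\nabla\dir)$ and $(\vel\cdot\nabla)\dir=\nabla\cdot(\vel\otimes\dir)$ by \eqref{eq:div id}. For these I take $\rho=1$ and estimate them in $H^{-s,q}\times H^{1-s,q}$. Since $s>0$ the velocity component is a negative-order space, so Sobolev embedding lets us place products of $X_\beta$-functions there. For the director component we have order $1-s\in(0,1)$, and I would use the fractional Leibniz/Kato--Ponce rule together with the assumption $\frac dq<2-s-\frac{2(1+\kappa)}{p}$. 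That assumption gives $B^{1-s+2-\frac{2(1+\kappa)}p}_{qp}\hookrightarrow W^{1,\infty}$-type control, so the $\dir$-component of the trace space embeds into $C^1$. Lower-order factors can therefore be absorbed into the constant $C_n$ via the ball in the trace space. The choice $\beta=\frac{1+s}{2}$ is natural here, and \eqref{eq:subcrit01-1} is at least as strong as the criticality condition $\frac{1+s}{2}\le 1-\frac12\frac{1+\kappa}{p}$ for $\rho=1$.

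The main obstacle is the polynomial nonlinearity $\Phi(\dir)$, of degree $2k+1$, which must be estimated in $H^{1-s,q}$. I would take $\rho=2k$ and seek $\beta$ with $2k(\beta-1+\frac{1+\kappa}{p})+\beta\le1$, that is $\beta\le 1-\frac{2k}{1+2k}\frac{1+\kappa}{p}$, which matches \eqref{eq:subcrit01-1}. One route uses the boundedness of the trace-space ball in $L^\infty$, and I would try this first, since it turns $\Phi$ into a locally Lipschitz map with $\rho=0$ plus a derivative. If that is insufficient, I would estimate $\||\dir|^{2i}\dir\|_{H^{1-s,q}}$ by fractional chain and product rules, using Sobolev embeddings of $H^{1-s+2\beta,q}$ into $L^{r}$ and $H^{1-s,r'}$, with the exponents balanced by \eqref{eq:subcrit01-2}. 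The telescoping estimate of Remark~\ref{rem:telescope young} handles the differences.

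Finally, $G$ must map into $\gamma(\ell^2,X_{1/2})=\gamma(\ell^2,\mathbb H^{1-s,q}\times H^{2-s,q})$. Here the director component requires up to $2-s\in(1,2)$ derivatives of $g_2(\dir)$, hence the $C^4$ (respectively $C^3$ for $g_1$) hypotheses. I would use the fractional chain rule in $H^{2-s,q}(\ell^2)$ with $\ell^2$-valued square-function norms, and the $C^1$-boundedness of $\dir$ on trace-space balls, to get local Lipschitz bounds with $\beta=\frac{1+s}{2}$ or smaller and $\rho=1$. With all hypotheses verified, uniqueness, maximality, $\sigma>0$, and the stated regularity follow from the abstract theorem.
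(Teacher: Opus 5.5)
Your plan is sound, but for the drift $F$ it takes a different route from the paper.

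\textbf{The paper's route.} The paper estimates $F$ purely in $X_\beta$-norms, with no $L^\infty$ information. It uses fractional Leibniz rules in $H^{1-s,q}$, with exponents $q_1,q_2$ for the bilinear terms and $p_0,p_1$ for $\Phi(\dir)-\Phi(\dirtwo)=\Psi(\dir,\dirtwo)(\dir-\dirtwo)$. This gives a single drift pair $(\beta,2k)$ with $\max\{1-\frac{1+\kappa}{p},\frac{1+s}{2},\frac14(1+s+\frac dq)\}\le\beta\le 1-\frac{2k}{1+2k}\frac{1+\kappa}{p}$. Nonemptiness of this window is exactly what \eqref{eq:subcrit01-1}--\eqref{eq:subcrit01-2} encode. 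The hypothesis $\frac dq<2-s-\frac{2(1+\kappa)}{p}$ is used there only for $G$. The trace ball is then bounded in $L^\infty$, which gives $\|G(Y)-G(\widetilde Y)\|_{\gamma(\ell^2,X_{1/2})}\le C_m\|Y-\widetilde Y\|_{X_{1-\frac{1+\kappa}{p},p}}$ and the pair $(\beta,0)$.

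\textbf{Your route.} You bring that hypothesis in already for $F$ (trace space $\hookrightarrow L^\infty\times C^1$) and absorb factors into $C_n$. This is legitimate and shorter, and pushed through it gives more than you claim. The velocity trace lies in $H^{1-s,q}\cap L^\infty$, and so does $\nabla\dir$. By Kato--Ponce with $L^\infty$ endpoints, every term of $F$, including $\Phi$, is then Lipschitz on trace balls with respect to the trace norm itself. One pair $(\beta,0)$ with any $\beta\in(1-\frac{1+\kappa}{p},1)$ suffices, and neither your $\rho=1$, $\rho=2k$ bookkeeping nor \eqref{eq:subcrit01-1}--\eqref{eq:subcrit01-2} is needed.

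\textbf{What the paper's route buys.} Its $F$-estimates do not rely on this strongly subcritical $L^\infty$ regime. For fixed $s,p,\kappa$ they only need \eqref{eq:subcrit01-2}, which permits smaller $q$ than $\frac dq<2-s-\frac{2(1+\kappa)}{p}$. The two thresholds for $\frac dq$ differ by $1-\frac{1+\kappa}{p}\frac{4k-2}{2k+1}>0$. So in the paper the extra restriction on $q$ is visibly due only to the multiplicative noise.

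\textbf{Details to adjust.}
\begin{itemize}
\item The value $\beta=\frac{1+s}{2}$, let alone a smaller one as you suggest for $G$, is in general not admissible. It can be $\le 1-\frac{1+\kappa}{p}$ (small $s$, large $p$). You must raise it into $(1-\frac{1+\kappa}{p},\,1-\frac{\rho}{1+\rho}\frac{1+\kappa}{p}]$. This is harmless because $X_{\beta'}\hookrightarrow X_\beta$ for $\beta'>\beta$, and the interval still contains values $\ge\frac{1+s}{2}$ by \eqref{eq:subcrit01-1}.
\item For $G$, bound differences in the trace norm, since an $X_{(1+s)/2}$-norm does not control $\|\vel-\veltwo\|_{L^\infty}$. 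Then use $X_{\beta,1}\hookrightarrow X_{1-\frac{1+\kappa}{p},p}$.
\item In divergence form the velocity products must lie in the positive-order space $H^{1-s,q}$. Embedding into the negative-order space $H^{-s,q}$ only helps after expanding into $(\vel\cdot\nabla)\vel$ and second derivatives of $\dir$ times $\nabla\dir$. Otherwise treat these products with Kato--Ponce exactly like the director terms.
\end{itemize}
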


\begin{prop}[Local well-posedness, $-1<s\leq 0$]\label{prop:loc well-posed Lp neg}
Let $s\in (-1,0]$, $p,q\in(2,\infty)$ and $\kappa\in[0,\frac{p}{2}-1)$. 
Let $X_0$ and $X_1$ be defined by \eqref{eq:def X0X1} and let Assumption \ref{ass:loc well-posed2} be satisfied. Assume additionally that
\[
(g_{1,n})_{n\geq1}\in C^3(\R^d\times\R^d;\ell^2(\R^d)),
\qquad
(g_{2,n})_{n\geq1}\in C^4(\R^d;\ell^2(\R^d)),
\]
and that
\[
\frac dq<2-s-\frac{2(1+\kappa)}p.
\]

Suppose that
\begin{align}
& 1-\frac{1+\kappa}{p}\leq 1-\frac{2k}{1+2k}\frac{1+\kappa}{p}, \label{eq:subcritneg1}\\
& q\geq \frac{d}{\,3-s-\frac{8k}{1+2k}\frac{1+\kappa}{p}\,}. \label{eq:subcritneg2}
\end{align}
Then, for all $(\vel_0,\dir_0)\in L^0_{\mathscr F_0}(\Omega;X_{1-\frac{1+\kappa}{p},p})$, \eqref{eq:MLC0} has a unique maximal solution $(\vel,\dir,\sigma)$ with $\sigma>0$ a.s. Moreover, a.s.
\begin{align*}
(\vel,\dir)\in H_\loc^{\theta,p}(0,\sigma,w_\kappa;X_{1-\theta})\cap C([0,\sigma);X_{1-\frac{1+\kappa}{p},p})
\qquad \text{for all } \theta\in [0,1/2),
\end{align*}
where $X_{1-\theta}$ and $X_{1-\frac{1+\kappa}{p},p}$ are given by \eqref{eq:complexinterpolLp} and \eqref{eq:tracespaceLp}, respectively.
\end{prop}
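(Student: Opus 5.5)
We verify the hypotheses of the local well-posedness theorem for critical/subcritical semilinear SPDEs in \cite{agresti2025nonlinear} (their Theorem 4.8 in the $L^p_\kappa$-framework; we use it in the form requiring stochastic maximal regularity plus locally Lipschitz estimates of $F$ and $G$ with respect to complex interpolation spaces $X_{\beta_j}$). The linear part is handled by Proposition~\ref{prop:SMR}, which gives $(A,B)\in\mathcal{SMR}^\bullet_{p,\kappa}$ for $s\in(-1,0]$. The main task is then to find exponents $\beta_j\in(1-\frac{1+\kappa}{p},1)$ and $\rho_j\ge0$ with the (sub)criticality condition $\rho_j(\beta_j-1+\frac{1+\kappa}{p})+\beta_j\le 1$ such that
\[
\|F(Y)-F(Y')\|_{X_0}+\|G(Y)-G(Y')\|_{\mathcal L_2(\ell^2,X_{1/2})}\le L_m\sum_j\|Y-Y'\|_{X_{\beta_j}}\bigl(1+\|Y\|_{X_{\beta_j}}^{\rho_j}+\|Y'\|_{X_{\beta_j}}^{\rho_j}\bigr)
\]
for $\|Y\|_{X_{1-\frac{1+\kappa}{p},p}},\|Y'\|_{X_{1-\frac{1+\kappa}{p},p}}\le m$. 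The condition $\frac dq<2-s-\frac{2(1+\kappa)}p$ is used exactly here: it gives $B^{3-s-2(1+\kappa)/p}_{q,p}\hookrightarrow C^1(\T^d)$ for the director trace space, so $\dir,\nabla\dir$ are bounded by $m$ in $L^\infty$.

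For the drift we split into terms. The bilinear terms $\mathbb P\nabla\cdot(\vel\otimes\vel)$, $\mathbb P\nabla\cdot(\nabla\dir\odot\nabla\dir)$ and $(\vel\cdot\nabla)\dir$ (written as $\nabla\cdot(\vel\otimes\dir)$ via \eqref{eq:div id}) are treated with the paraproduct/multiplication estimates in negative Sobolev spaces $H^{-s,q}$, $H^{1-s,q}$, choosing $\beta=\frac{1+s}{2}$-type exponents with $\rho=1$ as in the Navier--Stokes analysis of \cite{AV24SNS}; since $-s\ge0$ here, $H^{-s,q}$ is not a negative space and the products can be bounded by Sobolev embedding and Hölder, placing $\vel$ and $\nabla\dir$ in $X_\beta$ with $2\beta-1$ absorbing half a derivative. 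For $\Phi(\dir)$ we use Remark~\ref{rem:telescope young} and, since $1-s\ge1$, we must estimate $\Phi(\dir)-\Phi(\dirtwo)$ in $H^{1-s,q}$ with up to two derivatives (as $s>-1$); here we use the chain rule and the fractional Leibniz rule, treating $|\dir|^{2k}\dir$ as a polynomial of degree $2k+1$. Estimating the difference by $\|\dir-\dirtwo\|_{X_\beta}(1+\|\dir\|_{X_\beta}^{2k}+\|\dirtwo\|_{X_\beta}^{2k})$ with $\rho=2k$ and the subcriticality condition $\rho(\beta-1+\frac{1+\kappa}p)+\beta\le1$ yields the optimal choice $\beta=1-\frac{2k}{1+2k}\frac{1+\kappa}p$; hypothesis \eqref{eq:subcritneg1} guarantees this $\beta$ is admissible (at least the trace exponent), and \eqref{eq:subcritneg2} is precisely the Sobolev embedding $H^{1-s+2\beta,q}\hookrightarrow L^{\infty}$-type/ $W^{1,(2k+1)q}$ embedding needed to control the products of $2k+1$ factors after differentiation. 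Alternatively one may simply use the $C^1$-bound from the trace space to reduce the $\Phi$ term to $\rho=0$, which I would try first.

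For the noise, $G$ must map into $\mathcal L_2(\ell^2,X_{1/2})$ with $X_{1/2}=\mathbb H^{1-s,q}\times H^{2-s,q}$, which for $s\in(-1,0]$ requires up to three (velocity) and four (director) derivatives of compositions; this is why $g_1\in C^3$, $g_2\in C^4$ are assumed. We apply the chain rule for Nemytskii operators on $H^{\gamma,q}(\T^d;\ell^2)$, bounding $\|g(Y)-g(Y')\|_{H^{\gamma,q}}$ by $\|Y-Y'\|_{H^{\gamma,q}}$ times polynomial terms of lower-order norms, which are controlled by $m$ via the trace-space embedding. This noise estimate at the top regularity level, together with keeping the exponents compatible with the trace bound, is the main obstacle; I would first do the integer cases by direct Faà di Bruno expansions and interpolate. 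With both estimates, Theorem 4.8 of \cite{agresti2025nonlinear} gives the unique maximal solution, $\sigma>0$ a.s., and the stated regularity $H^{\theta,p}_\loc(0,\sigma,w_\kappa;X_{1-\theta})\cap C([0,\sigma);X_{1-\frac{1+\kappa}p,p})$ follows from the maximal regularity embeddings.
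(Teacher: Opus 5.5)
Your overall architecture is the paper's. You use stochastic maximal regularity from Proposition~\ref{prop:SMR} and fractional Leibniz/paraproduct estimates plus Sobolev embeddings for $F$, with a mean-value factorisation and $\rho=2k$ for $\Phi$. For $G$ you use the trace embeddings supplied by $\frac dq<2-s-\frac{2(1+\kappa)}{p}$ and $C^3$/$C^4$ composition estimates with constants depending on the trace-ball radius $m$. Then \cite[Theorem~4.7]{agresti2025nonlinear} concludes. The $G$-part and the $\Phi$-part are fine. The bilinear step, however, does not work as you set it up, for two reasons.

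First, rewriting $(\vel\cdot\nabla)\dir=\nabla\cdot(\vel\otimes\dir)$ hurts in the director equation. Its target is $H^{1-s,q}$, so bounding the divergence by $\|\vel\otimes\dir\|_{H^{2-s,q}}$ needs $\vel\in H^{2-s,\cdot}$. That is the regularity of $X_1$ ($\beta=1$), which is never admissible; \eqref{eq:div id} only helps for $(\vel\cdot\nabla)\vel$ in $H^{-s,q}$. You must keep the product $\vel\cdot\nabla\dir$ and apply \eqref{eq:paraproduct-Td} with $r=1-s$, placing $\vel$ and $\nabla\dir$ alternately in $L^{q_1}$ and $H^{1-s,q_2}$, as the paper does. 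H\"older alone is not enough, since $1-s\in[1,2)$.

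Second, exponents ``of size $\frac{1+s}{2}$'' are $\le\frac12<1-\frac{1+\kappa}{p}$ for $s\le 0$, hence inadmissible. The embeddings $H^{-s+2\beta,q}\hookrightarrow L^{q_1}\cap H^{1-s,q_2}$ with $\frac1q=\frac1{q_1}+\frac1{q_2}$ force $2\beta\ge1$ and $4\beta\ge 1+s+\frac dq$. This lower bound must be reconciled with $\beta>1-\frac{1+\kappa}{p}$ and the subcriticality upper bound, and that is exactly what \eqref{eq:subcritneg2} encodes. It is equivalent to $\frac14(1+s+\frac dq)\le 1-\frac{2k}{1+2k}\frac{1+\kappa}{p}$, because the paper uses a single $\beta$ with $\rho=2k$ for the whole drift. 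It is not an $L^\infty$- or $W^{1,(2k+1)q}$-embedding for $\Phi$; the polynomial term's own threshold $\beta\ge\frac{k}{2k+1}(s-1+\frac dq)$ is weaker and automatic.

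Your fallback for $\Phi$ is sound and points to a genuinely simpler route. On a trace ball of radius $m$, the condition $\frac dq<2-s-\frac{2(1+\kappa)}{p}$ gives $\|\vel\|_{L^\infty}+\|\vel\|_{H^{1-s,q}}+\|\dir\|_{C^1}+\|\dir\|_{H^{2-s,q}}\lesssim m$. The Kato--Ponce inequality with $L^\infty$ endpoints then yields $\|\Phi(\dir)-\Phi(\dirtwo)\|_{H^{1-s,q}}\le C_m\|\dir-\dirtwo\|_{B^{3-s-2(1+\kappa)/p}_{q,p}}$. The embedding $X_{\beta,1}\hookrightarrow X_{1-\frac{1+\kappa}{p},p}$ for $\beta>1-\frac{1+\kappa}{p}$ turns this into a $\rho=0$ pair, exactly as the paper treats $G$ in \eqref{eq:noise-trace-local-lipschitz}.

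The same argument applies verbatim to $\vel\otimes\vel$, $\nabla\dir\odot\nabla\dir$ and $\vel\cdot\nabla\dir$ in $H^{1-s,q}$, because $\vel$ and $\nabla\dir$ lie in $L^\infty\cap H^{1-s,q}$ with norms $\lesssim m$. So under the standing hypotheses the whole drift is locally Lipschitz in the trace norm, which repairs your bilinear step with no exponent bookkeeping. It also shows that \eqref{eq:subcritneg1}--\eqref{eq:subcritneg2} are redundant here. The first is trivial, and the second follows from $\frac dq<2-s-\frac{2(1+\kappa)}{p}$ because $\frac{4k-2}{2k+1}\cdot\frac{1+\kappa}{p}<1$. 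What the paper's $X_\beta$-polynomial estimates buy in exchange is that the drift bound never uses the $L^\infty$ condition, which enters only through $G$.
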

\noindent 
We start with the proof of Proposition \ref{prop:loc well-posed Lp}, in which we have $s\in[1,2)$, and classical techniques can be used.  

\begin{proof}[Proof of Proposition \ref{prop:loc well-posed Lp}]
 We prove that the abstract conditions in \cite[Assumption~4.1]{agresti2025nonlinear} are satisfied, so that the result follows from \cite[Theorem~4.7]{agresti2025nonlinear}. 

We note that $A\in \L(X_1,X_0)$ and $B\in\L(X_1,\gamma(\ell^2,X_{1/2}))$. %(recall that $\Dom=\mathbb{T}^d$), see \cite[\S 3.6.1]{agresti2025nonlinear} and \cite{AV24SNS}. %(3.15) in SNS paper, for \Dom=\R^d see Rem. 3.3 SNS paper
 %In particular, $\lambda_0+A$ is also sectorial on $X_0$ for some $\lambda_0\geq 0$.  %by  {\color{purple}   [add refs],  see also \cite[p.\ 43]{AV24SNS}: $1+\mathbb{S}$ has bdd $H^\infty$-calculus of angle 0}. 
 Moreover,   $\lambda+A$ is sectorial for a $\lambda\geq 0$ (see the proof of \cite[Theorem 3.2]{AV24SNS}) and $(A,B)\in \mathcal{SMR}_{p,\kappa}^{\bullet}$ by Proposition \ref{prop:SMR}. 
 It remains to prove local Lipschitz estimates for $F$ and $G$. 

We estimate $F$ term by term. We have for all $(\vel,\dir),(\veltwo,\dirtwo)\in X_1$:
\begin{align*}
     \| (\vel \cdot \nabla)\dir- (\veltwo \cdot \nabla)\dirtwo\|_{H^{1-s,q}}&\leq \|((\vel-\veltwo)\cdot \nabla) \dir\|_{H^{1-s,q}}+ \|(\veltwo \cdot \nabla)(\dir-\dirtwo)\|_{H^{1-s,q}}\\
     &\lesssim \|((\vel-\veltwo)\cdot \nabla) \dir\|_{L^{\lambda}}+ \|(\veltwo \cdot \nabla)(\dir-\dirtwo)\|_{L^{\lambda}}\\
     &\lesssim \| \vel-\veltwo\|_{L^{2\lambda}} \|\nabla \dir\|_{L^{2\lambda}}+ \| \veltwo\|_{L^{2\lambda}} \|\nabla(\dir-\dirtwo)\|_{L^{2\lambda}}\\
     &\lesssim \| \vel-\veltwo\|_{\mathbb{H}^{-s+2\beta,q}} \| \dir\|_{H^{1-s+2\beta,q}}+ \| \veltwo\|_{\mathbb{H}^{-s+2\beta,q}} \| \dir-\dirtwo \|_{H^{1-s+2\beta,q}}, %\\     &\lesssim \| (\vel-\veltwo,\dir-\dirtwo)\|_{X_\beta} \big(\| (\vel,\dir)\|_{X_\beta}+\| (\veltwo,\dirtwo)\|_{X_\beta}\big)
\end{align*}
where $\lambda \coloneqq \frac{dq}{(s-1)q+d}$   and   $\beta\coloneqq \frac{d-(s-1)q}{4q}+\frac{s}{2}\in[\frac12,1)$ satisfy
\begin{align}\label{eq:conditions Sob emb Lp case}
-\frac{d}{\lambda} =1-s-\frac{d}{q},\qquad 
-s+2\beta-\frac{d}{q}=-\frac{d}{2\lambda}.  
%-s+2\beta-\frac{d}{q}\geq-\frac{d}{2\lambda}
\end{align}
Since $s\geq1$ and $1/\lambda=1/q+(s-1)/d$, the Sobolev embedding $L^{\lambda}\hookrightarrow H^{1-s,q}$ holds. Moreover, $(s-1)q\leq d$ implies $\lambda\geq q/2>1$, justifying the earlier application of H\"older's inequality. 
Furthermore, by the definition of $\beta$ and $\lambda$ and using that  $(s-1)q\leq d$:
\[
-s+2\beta=-\frac{d}{2\lambda}+\frac{d}{q}=  \frac{1-s}{2}+\frac{d}{2q} \geq 0,
\]  
thus $H^{-s+2\beta,q}\hookrightarrow L^{2\lambda}$. For the (sub)criticality condition to hold, we further impose 
\[
 1-\frac{1+\kappa}{p}<\beta\leq 1-\frac{1+\kappa}{2p}, %>3/4 since kappa<p/2-1 and <1 trivially
%\frac{1+\kappa}{p}\leq 2(1-\beta).
\] 
which can be rewritten as \eqref{eq:subcrit1} using the definition of $\lambda$ and $\beta$. With the same $\beta$ and $\lambda$ as above, and recalling \eqref{eq:div id}, we find similarly
\begin{align*}
     \| (\vel \cdot \nabla)\vel-  (\veltwo \cdot \nabla)\veltwo\|_{H^{-s,q}}&\leq \| (\vel-\veltwo)  \otimes \vel \|_{H^{1-s,q}}+ \| \veltwo\otimes (\vel-\veltwo)  \|_{H^{1-s,q}}\\ 
     &\lesssim \| \vel-\veltwo\|_{L^{2\lambda}} \|\vel\|_{L^{2\lambda}}+ \| \veltwo\|_{L^{2\lambda}} \|\vel-\veltwo\|_{L^{2\lambda}}\\
     &\lesssim \| \vel-\veltwo\|_{\mathbb{H}^{-s+2\beta,q}}(\| \vel\|_{\mathbb{H}^{-s+2\beta,q}}+ \| \veltwo\|_{\mathbb{H}^{-s+2\beta,q}}). %\\     &\lesssim \| (\vel-\veltwo,\dir-\dirtwo)\|_{X_\beta} \big(\| (\vel,\dir)\|_{X_\beta}+\| (\veltwo,\dirtwo)\|_{X_\beta}\big),
\end{align*}
and 
%- (\vel \cdot \nabla)\vel -\nabla\cdot (\nabla\dir\odot\nabla\dir)
\begin{align*}
     \| \nabla\cdot (\nabla\dir\odot\nabla\dir)-  \nabla\cdot (\nabla\dirtwo\odot\nabla\dirtwo)\|_{H^{-s,q}} 
     &\lesssim \| \nabla\dir-\nabla\dirtwo\|_{L^{2\lambda}} \|\nabla\dir\|_{L^{2\lambda}}+ \| \nabla\dirtwo\|_{L^{2\lambda}} \|\nabla\dir-\nabla\dirtwo\|_{L^{2\lambda}}\\
     &\lesssim \| \dir-\dirtwo\|_{{H}^{1-s+2\beta,q}}(\| \dir\|_{{H}^{1-s+2\beta,q}}+ \| \dirtwo\|_{{H}^{1-s+2\beta,q}}).
     % \|((\vel-\veltwo)\cdot \nabla) \dir\|_{H^{1-s,q}}+ \|(\veltwo \cdot \nabla)(\dir-\dirtwo)\|_{H^{1-s,q}}\\
     % &\lesssim \|((\vel-\veltwo)\cdot \nabla) \dir\|_{L^{\lambda}}+ \|(\veltwo \cdot \nabla)(\dir-\dirtwo)\|_{L^{\lambda}}\\
     % &\lesssim \| \vel-\veltwo\|_{L^{2\lambda}} \|\nabla \dir\|_{L^{2\lambda}}+ \| \veltwo\|_{L^{2\lambda}} \|\nabla(\dir-\dirtwo)\|_{L^{2\lambda}}\\
     % &\lesssim \| \vel-\veltwo\|_{\mathbb{H}^{-s+2\beta,q}} \| \dir\|_{H^{1-s+2\beta,q}}+ \| \veltwo\|_{\mathbb{H}^{-s+2\beta,q}} \| \dir-\dirtwo \|_{H^{1-s+2\beta,q}}\\
     % &\lesssim \| (\vel-\veltwo,\dir-\dirtwo)\|_{X_\beta} \big(\| (\vel,\dir)\|_{X_\beta}+\| (\veltwo,\dirtwo)\|_{X_\beta}\big)
\end{align*}
Lastly, using Remark \ref{rem:telescope young}, Sobolev embeddings and H\"older's inequality, we have  
\begin{align*}
    \|\Phi(\dir)-\Phi(\dirtwo)\|_{H^{1-s,q}}&\lesssim \| | \dir - \dirtwo |(1+|\dir|^{2k}+|\dirtwo|^{2k})\|_{L^{\lambda}}\\   %\| |\dir|^{2k}\dir-|\dirtwo|^{2k}\dirtwo \|_{L^{\lambda}}\\
    %&\lesssim\| \dir - \dirtwo\|_{L^{\lambda_1}} \|1+|\dir|^{2k}+|\dirtwo|^{2k}\|_{L^{\lambda_2}}\\
    &\lesssim\| \dir - \dirtwo\|_{L^{\lambda(1+2k)}} \big(1+\|\dir\|_{L^{\lambda(1+2k)}}^{2k}+\|\dirtwo\|_{L^{\lambda(1+2k)}}^{2k}\big)\\
    &\lesssim \| \dir - \dirtwo\|_{{H}^{1-s+2\hat{\beta},q}} \big(1+\|\dir\|_{{H}^{1-s+2\hat{\beta},q}}^{2k}+\|\dirtwo\|_{{H}^{1-s+2\hat{\beta},q}}^{2k}\big),
\end{align*}
provided that
$1-s+2\hat{\beta}-\frac{d}{q}\geq -\frac{d}{\lambda(1+2k)}.$ 
Recalling \eqref{eq:conditions Sob emb Lp case}, the latter is equivalent to 
\begin{equation}\label{eq:beta1}
\hat\beta\geq\frac{k}{1+2k}\Big(s-1+\frac dq\Big).
\end{equation}
Moreover, the (sub)criticality conditions for the polynomial term ($\hat\rho=2k$) require
\begin{equation}\label{eq:beta2}
    1-\frac{1+\kappa}{p}<\hat\beta\leq1-\frac{2k}{1+2k}\frac{1+\kappa}{p}. 
\end{equation}
The upper bound above is strictly greater than $1-(1+\kappa)/p$. Furthermore, Assumption~\eqref{eq:subcrit2} is equivalent to
\[
1-\frac{2k}{1+2k}\frac{1+\kappa}{p}
>\frac{k}{1+2k}\Bigl(s-1+\frac dq\Bigr),
\]
so there exists a $\hat\beta$ that satisfies both \eqref{eq:beta1} and \eqref{eq:beta2}. 

Combining the estimates above and recalling \eqref{eq:complexinterpolLp}, we conclude that 
\begin{equation*}
    \begin{split}
        \|F ( \vel, \dir )- F ( \veltwo, \dirtwo ) \|_{X_0} &\lesssim \| ( \vel - \veltwo, \dir-\dirtwo )\|_{X_\beta}
\left( 1+\| ( \vel, \dir )\|_{X_\beta} + \|(\veltwo , \dirtwo)\|_{X_\beta}\right)\\
&\quad+ \|( \vel - \veltwo,\dir -\dirtwo) \|_ {X_{ \hat \beta}}
\left( 1 + \|( \vel, \dir)\|_{X_{\hat\beta}}^{2k}
+ \| ( \veltwo , \dirtwo) \|_{X_{\hat\beta}}^{2k}
\right).
    \end{split}
\end{equation*}

Next, we estimate $G$. We include the Helmholtz projection in the first component and use its boundedness. For the same $\lambda$ and $\beta$ as above, we have 
\begin{align*} %\gamma(\ell^2;X_{1/2})
   \| \big(\mathbb{P}[g_{1,n}(\vel,\dir) - g_{1,n}(\veltwo,\dirtwo)]\big)_{n\geq 1}\|_{\gamma(\ell^2,\mathbb{H}^{-s+1,q})}&\lesssim \| \big(g_{1,n}(\vel,\dir) - g_{1,n}(\veltwo,\dirtwo)\big)_{n\geq 1}\|_{\gamma(\ell^2,L^\lambda)}\\
   &\lesssim \| \big(g_{1,n}(\vel,\dir) - g_{1,n}(\veltwo,\dirtwo)\big)_{n\geq 1}\|_{ L^\lambda(\mathbb{T}^d;\ell^2)}\\
   &\lesssim \| (\vel,\dir) -  (\veltwo,\dirtwo) \|_{ L^\lambda} \\%L^\lambda(\mathbb{T}^d;\R^d\times\R^d)
  % &\lesssim \| (\vel,\dir) -  (\veltwo,\dirtwo) \|_{ L^{2\lambda}} \\
   &\lesssim \| (\vel,\dir) -  (\veltwo,\dirtwo) \|_{X_\beta},
\end{align*}
where we used \cite[Proposition~9.3.2]{HNVWvolume2}  in the second line, Assumption \ref{ass:loc well-posed2}(2) in  the third line,   and in the last line we used that ${H}^{1-s+2\beta,q}\hookrightarrow{H}^{-s+2\beta,q}\hookrightarrow L^{2\lambda}\hookrightarrow L^\lambda$. 

Similarly, for the second component of $G$, we have by the chain rule and Assumption \ref{ass:loc well-posed2}(2)(3): 
\begin{align*}
    \|\big(g_{2,n}(\dir) - g_{2,n}(\dirtwo)\big)_{n\geq 1}\|_{\gamma(\ell^2,{H}^{-s+2,q})}
     &\lesssim 
    \| \big(g_{2,n}(\dir) - g_{2,n}(\dirtwo)\big)_{n\geq 1}\|_{ H^{1,\lambda}(\mathbb{T}^d;\ell^2)}\\
   &\lesssim \| \dir-  \dirtwo\|_{ L^\lambda} +\|  \nabla g_{2,n}(\dir) \cdot
    \nabla\dir  -   \nabla g_{2,n}(\dirtwo)
   \cdot\nabla\dirtwo\|_{L^\lambda(\mathbb{T}^d;\ell^2)} \\
%     &\lesssim \| (\vel,\dir) -  (\veltwo,\dirtwo) \|_{X_\beta} +\Big\| \begin{pmatrix}
%    \nabla_x g_{2,n}(\vel,\dir)-\nabla_x g_{2,n}(\veltwo,\dirtwo)\\
%    \nabla g_{2,n}(\vel,\dir)-\nabla g_{2,n}(\veltwo,\dirtwo)
%    \end{pmatrix}\cdot
%    (\nabla\vel,\nabla\dir) \Big\|_{ L^\lambda(\mathbb{T}^d;\ell^2)} 
% \\
%    &\qquad+\Big\|
%    \begin{pmatrix}
%    \nabla_x g_{2,n}(\veltwo,\dirtwo)\\
%    \nabla g_{2,n}(\veltwo,\dirtwo)
%    \end{pmatrix}\cdot
%    (\nabla\vel-\nabla\veltwo,\nabla\dir-\nabla\dirtwo) \Big\|_{ L^\lambda(\mathbb{T}^d;\ell^2)} \\
   &\lesssim \|\dir - \dirtwo  \|_{L^{\lambda}} +\|
   \nabla g_{2,n}(\dir)-\nabla g_{2,n}(\dirtwo)
   \|_{ L^{2\lambda}(\mathbb{T}^d;\ell^2)}
   \| \nabla\dir\|_{ L^{2\lambda}} 
\\
   &\qquad+\|
   \nabla g_{2,n}(\dirtwo)\|_{ L^{2\lambda}(\mathbb{T}^d;\ell^2)} 
   \|\nabla\dir-\nabla\dirtwo\|_{ L^{2\lambda}} \\
   &\lesssim \|\dir-\dirtwo\|_{L^\lambda}
   +\|\dir-\dirtwo\|_{L^{2\lambda}}
   \|\nabla\dir\|_{L^{2\lambda}}\\
   &\qquad+\|1+|\dirtwo|\|_{L^{2\lambda}}
   \|\nabla\dir-\nabla\dirtwo\|_{L^{2\lambda}}\\
   &\lesssim \| \dir - \dirtwo \|_{{H}^{1-s+2\beta,q}} \big(1+ 
   \|\dir\|_{ {H}^{1-s+2\beta,q}} 
 +\|\dirtwo\|_{{H}^{1-s+2\beta,q} } \big).
\end{align*}
Recalling \eqref{eq:complexinterpolLp},the preceding bounds yield
\begin{equation}\label{eq:lp-local-lipschitz-G}
\|G(\vel,\dir)-G(\veltwo,\dirtwo)\|_{\gamma(\ell^2,X_{1/2})}
\lesssim \| (\vel-\veltwo,\dir-\dirtwo)\|_{X_\beta}
\big(1+\| (\vel,\dir)\|_{X_\beta}+\| (\veltwo,\dirtwo)\|_{X_\beta}\big).
\end{equation}
Finally, note that $X_{\beta,1}\hookrightarrow X_{\beta}$ and $X_{\hat\beta,1}\hookrightarrow X_{\hat\beta}$, concluding the proof.
\end{proof}

\begin{rem}[Local Lipschitz and growth properties]\label{rem:lipgrowth}
Under the assumptions of  Proposition~\ref{prop:loc well-posed Lp}, the   termwise Sobolev product estimates in the proof of Proposition~\ref{prop:loc well-posed Lp} validate 
  the local Lipschitz estimate of \cite[Assumption~4.1]{agresti2025nonlinear} with $ 
(\beta_1,\rho_1)=(\beta,1)$ and $ (\beta_2,\rho_2)=(\hat\beta,2k)$, 
where $\beta$ and $\hat\beta$ are the exponents introduced in that proof.
{Moreover, let $r\in(2,\infty)$, $\alpha\in[0,r/2-1)$ and $\frac{1+\alpha}{r}\leq\frac{1+\kappa}{p}$. The upper admissibility bounds in \cite[(4.3)]{agresti2025nonlinear} remain valid, but the interpolation exponents may have to be increased. For $j=1,2$, choose $\beta'_j\geq\beta_j$ such that
$$
1-\frac{1+\alpha}{r}<\beta'_j\leq 1-\frac{\rho_j}{1+\rho_j}\frac{1+\alpha}{r}.
$$
Such choices exist because the right endpoint is no smaller than the original upper bound and is strictly larger than $1-\frac{1+\alpha}{r}$. The embeddings $X_{\beta'_j,1}\hookrightarrow X_{\beta_j,1}$ preserve the nonlinear estimates. Hence \cite[Assumption~4.1]{agresti2025nonlinear} holds in the $(r,\alpha)$-framework with these exponents. This is relevant for Section \ref{sec:regularisation}.}
Finally, there exists a constant $C>0$ such that for all $x\in X_1$ one has
\begin{equation}\label{eq:kappa0-growth_rem}
\|F(x)\|_{X_0} + \|G(x)\|_{\gamma(\mathcal U,X_{1/2})}
\le C\Bigl(1+\|x\|_{X_{\beta,1}}^{2}+\|x\|_{X_{\hat\beta,1}}^{2k+1}\Bigr),
\end{equation}
where $X_{\theta,1}=(X_0,X_1)_{\theta,1}$ denotes real interpolation. 
% The local Lipschitz bound required in \cite[Assumption~4.1]{agresti2025nonlinear}
% is exactly what is proved term-by-term in the proof of Proposition~\ref{prop:loc well-posed Lp}.
% More precisely, it holds with the two pairs
% $$
% (\beta_1, \rho_1 )=(\beta,1),\qquad    (\beta_2,\rho_2 )=(\hat \beta, 2k).
% $$
% The admissibility restriction on these pairs is precisely \cite[(4.3)]{agresti2025nonlinear};
% for fixed pairs it depends on $(r,\alpha)$ only through $(1+\alpha)/r$, hence it remains valid whenever
% $$
% \frac{1+\alpha}{r} \leq \frac {1+\kappa}{p}.
% $$
To prove this, let $x=(u,d)\in X_1$.
Applying the local Lipschitz estimates from the proof of Proposition~\ref{prop:loc well-posed Lp} with $(\veltwo,\dirtwo)=(0,0)$,  and observing that $F(0)=0$ and $G(0)\in \gamma(U,X_{1/2})$ are constant, we obtain \eqref{eq:kappa0-growth_rem}.  
%and using Assumption~\ref{ass:loc well-posed2} to control the constant terms $F(0)$ and $G(0)$, yields
% $$
% \|F(x)\|_{X_0}+\|G(x)\|_{\gamma(\mathcal U,X_{1/2})}
% \lesssim 1+\|x\|_{X_{\beta}}^{2}+\|x\|_{X_{\hat\beta}}^{2k+1}.
% $$
% Finally, $X_{\beta,1}\hookrightarrow X_{\beta}$ and $X_{\hat\beta,1}\hookrightarrow X_{\hat\beta}$, so \eqref{eq:kappa0-growth} follows.
\end{rem}

 \begin{comment}
\todo[inline]{
Paraproduct estimates ref: [Taylor Tools for PDE Chap 2 Proposition 1.1, p.\ 105]: If $s>0$, $q,q_2\in(1,\infty)$, $q_1\in(1,\infty]$ and $1/q=1/q_1+1/q_2$, then $\|fg\|_{H^{s,q}}\lesssim \|f\|_{H^{s,q_1}}\|g\|_{L^{ q_2}}+\|f\|_{L^{q_1}}\|g\|_{H^{s,q_2}}$.}
\end{comment}
\begin{proof}[Proof of Proposition \ref{prop:loc well-posed Lp subcritical}] 
Throughout the proof we repeatedly use the fractional Leibniz rule
\[
\|fg\|_{H^{1-s,q}}
\lesssim
\|f\|_{H^{1-s,a_1}}\|g\|_{L^{a_2}}
+
\|f\|_{L^{b_1}}\|g\|_{H^{1-s,b_2}},
\qquad
\frac{1}{q}=\frac{1}{a_1}+\frac{1}{a_2}
=\frac{1}{b_1}+\frac{1}{b_2},
\]
for suitable exponents $a_1,a_2,b_1,b_2\in(1,\infty)$.
We estimate $F$ term by term. Let $q_1,q_2\in(1,\infty)$ be exponents to be specified below such that
\[
\frac{1}{q}=\frac{1}{q_1}+\frac{1}{q_2}.
\]
Then, for all $(\vel,\dir),(\veltwo,\dirtwo)\in X_1$, the fractional Leibniz rule yields
\begin{align*}
     \| (\vel \cdot \nabla)\dir- (\veltwo \cdot \nabla)\dirtwo\|_{H^{1-s,q}}
     &\leq \|((\vel-\veltwo)\cdot \nabla) \dir\|_{H^{1-s,q}}+ \|(\veltwo \cdot \nabla)(\dir-\dirtwo)\|_{H^{1-s,q}}\\
     &\lesssim \| \vel-\veltwo\|_{\mathbb{H}^{1-s,q_1}} \|\nabla \dir\|_{L^{q_2}}
     + \| \vel-\veltwo\|_{L^{q_2}} \|\nabla \dir\|_{H^{1-s,q_1}}\\
     &\quad + \| \veltwo\|_{\mathbb{H}^{1-s,q_1}} \|\nabla(\dir-\dirtwo)\|_{L^{q_2}}
     + \| \veltwo\|_{L^{q_2}} \|\nabla(\dir-\dirtwo)\|_{H^{1-s,q_1}}\\
     &\lesssim \| \vel-\veltwo\|_{\mathbb{H}^{-s+2\beta,q}} \| \dir\|_{H^{1-s+2\beta,q}}
     + \| \veltwo\|_{\mathbb{H}^{-s+2\beta,q}} \| \dir-\dirtwo \|_{H^{1-s+2\beta,q}},
\end{align*}
and recalling \eqref{eq:div id}, we find similarly
\begin{align*}
     \| (\vel \cdot \nabla)\vel-  (\veltwo \cdot \nabla)\veltwo\|_{H^{-s,q}}
     &\leq \| (\vel-\veltwo)  \otimes \vel \|_{H^{1-s,q}}+ \| \veltwo\otimes (\vel-\veltwo)  \|_{H^{1-s,q}}\\
     &\lesssim \| \vel-\veltwo\|_{\mathbb{H}^{1-s,q_1}} \|\vel\|_{L^{q_2}}
     + \| \vel-\veltwo\|_{L^{q_2}} \|\vel\|_{\mathbb{H}^{1-s,q_1}}\\
     &\qquad + \| \veltwo\|_{\mathbb{H}^{1-s,q_1}} \|\vel-\veltwo\|_{L^{q_2}}
     + \| \veltwo\|_{L^{q_2}} \|\vel-\veltwo\|_{\mathbb{H}^{1-s,q_1}}\\
     &\lesssim \| \vel-\veltwo\|_{\mathbb{H}^{-s+2\beta,q}}
     \big(\| \vel\|_{\mathbb{H}^{-s+2\beta,q}}+ \| \veltwo\|_{\mathbb{H}^{-s+2\beta,q}}\big),
\end{align*}
and
\begin{align*}
     \| \nabla\cdot (\nabla\dir\odot\nabla\dir)-  \nabla\cdot (\nabla\dirtwo\odot\nabla\dirtwo)\|_{H^{-s,q}} 
     &\leq \| (\nabla\dir-\nabla\dirtwo)\odot\nabla\dir\|_{H^{1-s,q}}
     + \| \nabla\dirtwo\odot(\nabla\dir-\nabla\dirtwo)\|_{H^{1-s,q}}\\
     &\lesssim \| \nabla\dir-\nabla\dirtwo\|_{H^{1-s,q_1}} \|\nabla\dir\|_{L^{q_2}}
     + \| \nabla\dir-\nabla\dirtwo\|_{L^{q_2}} \|\nabla\dir\|_{H^{1-s,q_1}}\\
     &\quad + \| \nabla\dirtwo\|_{H^{1-s,q_1}} \|\nabla\dir-\nabla\dirtwo\|_{L^{q_2}}
     + \| \nabla\dirtwo\|_{L^{q_2}} \|\nabla\dir-\nabla\dirtwo\|_{H^{1-s,q_1}}\\
     &\lesssim \| \dir-\dirtwo\|_{{H}^{1-s+2\beta,q}}
     \big(\| \dir\|_{{H}^{1-s+2\beta,q}}+ \| \dirtwo\|_{{H}^{1-s+2\beta,q}}\big).
\end{align*}
Lastly, using Remark \ref{rem:telescope young}, define the matrix-valued polynomial
\[
\Psi(\dir,\dirtwo):=\int_0^1 D\Phi\big(\dirtwo+\theta(\dir-\dirtwo)\big)\,\mathrm d\theta.
\]
Then
\[
\Phi(\dir)-\Phi(\dirtwo)=\Psi(\dir,\dirtwo)(\dir-\dirtwo),
\qquad
|\Psi(\dir,\dirtwo)|\lesssim 1+|\dir|^{2k}+|\dirtwo|^{2k}.
\]
Let $p_0,p_1\in(1,\infty)$ be exponents to be specified below, and set
\[
\tilde q_1:=p_1,
\qquad
\tilde q_2:=\frac{p_0}{2k},
\qquad
\tilde r_1:=p_0,
\qquad
\frac{1}{\tilde r_2}:=\frac{1}{p_1}+\frac{2k-1}{p_0}.
\]
Then
\[
\frac{1}{q}=\frac{1}{\tilde q_1}+\frac{1}{\tilde q_2}
=\frac{1}{\tilde r_1}+\frac{1}{\tilde r_2}
\]
and the fractional Leibniz rule yields
\begin{align*}
    \|\Phi(\dir)-\Phi(\dirtwo)\|_{H^{1-s,q}}
    &\lesssim \| {\Psi(\dir,\dirtwo)(\dir - \dirtwo)} \|_{H^{1-s,q}}\\
    &\lesssim \|\dir - \dirtwo\|_{H^{1-s,\tilde q_1}} \|\Psi(\dir,\dirtwo)\|_{L^{\tilde q_2}}
    + \|\dir - \dirtwo\|_{L^{\tilde r_1}} \|\Psi(\dir,\dirtwo)\|_{H^{1-s,\tilde r_2}}\\
    &\lesssim \|\dir - \dirtwo\|_{H^{1-s,p_1}}
    \big(1+\|\dir\|_{L^{p_0}}^{2k}+\|\dirtwo\|_{L^{p_0}}^{2k}\big)\\
    &\quad + \|\dir - \dirtwo\|_{L^{p_0}} \|\Psi(\dir,\dirtwo)\|_{H^{1-s,\tilde r_2}}.
\end{align*}
To estimate the last term, we apply the fractional Leibniz rule repeatedly to each monomial in $\Psi(\dir,\dirtwo)$, placing one factor in $H^{1-s,p_1}$ and all remaining factors in $L^{p_0}$. Since every monomial in $\Psi$ has degree at most $2k$, we obtain
\begin{align*}
    \|\Psi(\dir,\dirtwo)\|_{H^{1-s,\tilde r_2}}
    &\lesssim 1+\big(\|\dir\|_{H^{1-s,p_1}}+\|\dirtwo\|_{H^{1-s,p_1}}\big)
    \big(1+\|\dir\|_{L^{p_0}}^{2k-1}+\|\dirtwo\|_{L^{p_0}}^{2k-1}\big).
\end{align*}
Consequently,
\begin{align*}
    \|\Phi(\dir)-\Phi(\dirtwo)\|_{H^{1-s,q}}
    &\lesssim \|\dir - \dirtwo\|_{H^{1-s,p_1}}
    \big(1+\|\dir\|_{L^{p_0}}^{2k}+\|\dirtwo\|_{L^{p_0}}^{2k}\big)\\
    &\quad + \|\dir - \dirtwo\|_{L^{p_0}}\Big[1+
    \big(\|\dir\|_{H^{1-s,p_1}}+\|\dirtwo\|_{H^{1-s,p_1}}\big)
    \big(1+\|\dir\|_{L^{p_0}}^{2k-1}+\|\dirtwo\|_{L^{p_0}}^{2k-1}\big)\Big]\\
    &\lesssim \|\dir - \dirtwo\|_{{H}^{1-s+2\beta,q}}
    \big(1+\|\dir\|_{{H}^{1-s+2\beta,q}}^{2k}+\|\dirtwo\|_{{H}^{1-s+2\beta,q}}^{2k}\big).
\end{align*}
We now specify the parameters. Choose $\beta\in[0,1)$ such that
\begin{equation}\label{eq:beta-range-s01}
\max\Big\{1-\frac{1+\kappa}{p},\,\frac{1+s}{2},\,\frac{1}{4}\big(1+s+\tfrac{d}{q}\big)\Big\}
\leq \beta\leq 1-\frac{2k}{1+2k}\frac{1+\kappa}{p}.
\end{equation}
This is possible by \eqref{eq:subcrit01-1} and \eqref{eq:subcrit01-2}.
We additionally choose $\beta>1-\frac{1+\kappa}{p}$. This is always possible because
\[
1-\frac{2k}{1+2k}\frac{1+\kappa}{p}
-\left(1-\frac{1+\kappa}{p}\right)
=\frac{1+\kappa}{p(1+2k)}>0.
\]
For the bilinear terms, we choose $q_1,q_2$ such that
\[
\frac{1}{q}=\frac{1}{q_1}+\frac{1}{q_2},
\qquad
H^{-s+2\beta,q}\hookrightarrow H^{1-s,q_1},
\qquad
H^{-s+2\beta,q}\hookrightarrow L^{q_2}.
\]
Such exponents exist precisely when
\begin{equation}\label{eq:conditions-Sob-emb-bilinear}
4\beta\geq 1+s+\frac{d}{q},
\end{equation}
that is, whenever
\[
\beta\geq \frac{1}{4}\Bigl(1+s+\frac{d}{q}\Bigr).
\]
Moreover, since $\beta\geq \frac{1+s}{2}$, we have
\[
-s+2\beta-1\geq 0,
\]
so that
\[
\|\nabla \dir\|_{L^{q_2}}+\|\nabla \dir\|_{H^{1-s,q_1}}
\lesssim
\|\dir\|_{H^{1-s+2\beta,q}}.
\]
This justifies the preceding bounds for $(\vel\cdot\nabla)\dir$, $(\vel\cdot\nabla)\vel$, and $\nabla\cdot(\nabla\dir\odot\nabla\dir)$.
For the potential term, we choose $p_0,p_1$ such that
\[
\frac{1}{q}=\frac{1}{p_1}+\frac{2k}{p_0},
\qquad
H^{1-s+2\beta,q}\hookrightarrow H^{1-s,p_1},
\qquad
H^{1-s+2\beta,q}\hookrightarrow L^{p_0}.
\]
Such exponents exist whenever
\[
\frac{k}{2k+1}\Bigl(s-1+\frac{d}{q}\Bigr)\leq \beta.
\]
This condition is weaker than the lower bound
\[
\beta\geq \frac14\Bigl(1+s+\frac{d}{q}\Bigr)
\]
appearing in \eqref{eq:beta-range-s01}, and is therefore automatically satisfied. Indeed, \eqref{eq:beta-range-s01} implies
\[
\frac14\Bigl(1+s+\frac{d}{q}\Bigr)<1,
\]
hence $s-1+\frac{d}{q}<2$. If $s-1+\frac{d}{q}\leq0$, the potential threshold is automatic because $\beta>0$. Otherwise,
\[
\frac{k}{2k+1}\Bigl(s-1+\frac{d}{q}\Bigr)
\leq \frac12\Bigl(s-1+\frac{d}{q}\Bigr)
< \frac12+\frac14\Bigl(s-1+\frac{d}{q}\Bigr)
= \frac14\Bigl(1+s+\frac{d}{q}\Bigr).
\]
The strict lower bound $\beta>1-\frac{1+\kappa}{p}$ is the one required by the Agresti--Veraar framework, whereas the upper bound
\[
\beta\leq 1-\frac{2k}{1+2k}\frac{1+\kappa}{p}
\]
is exactly the {(sub)criticality} condition for the polynomial nonlinearity $\Phi$, corresponding to $\rho=2k$.
Combining the estimates above and recalling \eqref{eq:complexinterpolLp}, we conclude that
\begin{equation}\label{eq:frac-local-lipschitz-F}
\|F(\vel,\dir)-F(\veltwo,\dirtwo)\|_{X_0}
\lesssim \| (\vel-\veltwo,\dir-\dirtwo)\|_{X_\beta}
\big(1+\| (\vel,\dir)\|_{X_\beta}^{2k}+\| (\veltwo,\dirtwo)\|_{X_\beta}^{2k}\big).
\end{equation}
For $G$, the additional restriction $d/q<2-s-2(1+\kappa)/p$ and the inequality $(1+\kappa)/p<1/2$ imply
\begin{align*}
\mathbb B^{2-s-\frac{2(1+\kappa)}p}_{q,p}
&\hookrightarrow \mathbb H^{1-s,q}\cap L^\infty(\T^d;\R^d),\\
B^{3-s-\frac{2(1+\kappa)}p}_{q,p}
&\hookrightarrow H^{2-s,q}(\T^d;\R^d)\cap L^\infty(\T^d;\R^d).
\end{align*}
The Sobolev smoothness gap in both embeddings is $1-2(1+\kappa)/p>0$. Fix $m\geq1$ and $Y=(\vel,\dir),\widetilde Y=(\veltwo,\dirtwo)\in X_1$ with trace norms at most $m$. Their values lie in a fixed bounded subset of the coefficient domains. The assumed $C^3$ and $C^4$ regularity therefore bounds the derivatives of the entire $\ell^2$-valued coefficient maps on this set. The mean-value formula and Sobolev product and composition estimates, as in the proof of \cite[Lemma~4.3]{AV24SNS}, give
\begin{align*}
&\|(g_{1,n}(\vel,\dir)-g_{1,n}(\veltwo,\dirtwo))_n\|_{H^{1-s,q}(\T^d;\ell^2)}\\
&\qquad+\|(g_{2,n}(\dir)-g_{2,n}(\dirtwo))_n\|_{H^{2-s,q}(\T^d;\ell^2)}
\leq C_m\|Y-\widetilde Y\|_{X_{1-\frac{1+\kappa}{p},p}}.
\end{align*}
By the $\gamma$-Fubini isomorphism, boundedness of $\mathbb P$ and $X_{\beta,1}\hookrightarrow X_{1-\frac{1+\kappa}{p},p}$, it follows that
\begin{equation}\label{eq:noise-trace-local-lipschitz}
\|G(Y)-G(\widetilde Y)\|_{\gamma(\ell^2,X_{1/2})}
\leq C_m\|Y-\widetilde Y\|_{X_{1-\frac{1+\kappa}{p},p}}
\lesssim C_m\|Y-\widetilde Y\|_{X_{\beta,1}}.
\end{equation}
Moreover, $G(0)\in\gamma(\ell^2,X_{1/2})$ because its coefficients are spatially constant and square summable. Together with \eqref{eq:frac-local-lipschitz-F} and $X_{\beta,1}\hookrightarrow X_\beta$, this verifies \cite[Assumption~4.1]{agresti2025nonlinear}, using the drift pair $(\beta,2k)$ and the noise pair $(\beta,0)$. The drift admissibility follows from \eqref{eq:beta-range-s01}, while the noise pair is admissible since $1-(1+\kappa)/p<\beta<1$. Proposition~\ref{prop:SMR} and \cite[Theorem~4.7]{agresti2025nonlinear} now give the unique maximal solution and its stated regularity.
\end{proof}

\begin{proof}[Proof of Proposition \ref{prop:loc well-posed Lp neg}]  
Throughout the proof we use the paraproduct estimate on $\mathbb{T}^d$ in the form
\begin{equation}\label{eq:paraproduct-Td}
\|fg\|_{H^{r,p}(\mathbb{T}^d)}
\lesssim
\|f\|_{L^{p_1}(\mathbb{T}^d)}\|g\|_{H^{r,p_2}(\mathbb{T}^d)}
+
\|g\|_{L^{r_1}(\mathbb{T}^d)}\|f\|_{H^{r,r_2}(\mathbb{T}^d)},
\end{equation}
valid for every $r>0$, $p\in(1,\infty)$, $p_1,p_2,r_1,r_2\in{(1,\infty)}$ with
\[
\frac1p=\frac1{p_1}+\frac1{p_2}=\frac1{r_1}+\frac1{r_2},
\]
see \cite{Chae:02}. We estimate $F$ term by term.
Let $q_1,q_2\in(1,\infty)$ be exponents to be specified below such that
\[
\frac1q=\frac1{q_1}+\frac1{q_2}.
\]
Then, for all $(\vel,\dir),(\veltwo,\dirtwo)\in X_1$, applying \eqref{eq:paraproduct-Td} with $r=1-s$ yields
\begin{align*}
     \| (\vel \cdot \nabla)\dir- (\veltwo \cdot \nabla)\dirtwo\|_{H^{1-s,q}}
     &\leq \|((\vel-\veltwo)\cdot \nabla) \dir\|_{H^{1-s,q}}+ \|(\veltwo \cdot \nabla)(\dir-\dirtwo)\|_{H^{1-s,q}}\\
     &\lesssim \| \vel-\veltwo\|_{\mathbb{L}^{q_1}} \|\nabla \dir\|_{H^{1-s,q_2}}
     + \|\nabla \dir\|_{L^{q_1}} \|\vel-\veltwo\|_{\mathbb{H}^{1-s,q_2}}\\
     &\quad + \| \veltwo\|_{\mathbb{L}^{q_1}} \|\nabla(\dir-\dirtwo)\|_{H^{1-s,q_2}}
     + \|\nabla(\dir-\dirtwo)\|_{L^{q_1}} \|\veltwo\|_{\mathbb{H}^{1-s,q_2}}\\
     &\lesssim \| \vel-\veltwo\|_{\mathbb{H}^{-s+2\beta,q}} \| \dir\|_{H^{1-s+2\beta,q}}
     + \| \veltwo\|_{\mathbb{H}^{-s+2\beta,q}} \| \dir-\dirtwo \|_{H^{1-s+2\beta,q}}.
\end{align*}
Recalling \eqref{eq:div id}, we find similarly
\begin{align*}
     \| (\vel \cdot \nabla)\vel-  (\veltwo \cdot \nabla)\veltwo\|_{H^{-s,q}}
     &\leq \| (\vel-\veltwo)  \otimes \vel \|_{H^{1-s,q}}+ \| \veltwo\otimes (\vel-\veltwo)  \|_{H^{1-s,q}}\\
     &\lesssim \| \vel-\veltwo\|_{\mathbb{L}^{q_1}} \|\vel\|_{\mathbb{H}^{1-s,q_2}}
     + \|\vel\|_{\mathbb{L}^{q_1}} \|\vel-\veltwo\|_{\mathbb{H}^{1-s,q_2}}\\
     &\quad + \| \veltwo\|_{\mathbb{L}^{q_1}} \|\vel-\veltwo\|_{\mathbb{H}^{1-s,q_2}}
     + \|\vel-\veltwo\|_{\mathbb{L}^{q_1}} \|\veltwo\|_{\mathbb{H}^{1-s,q_2}}\\
     &\lesssim \| \vel-\veltwo\|_{\mathbb{H}^{-s+2\beta,q}}
     \big(\| \vel\|_{\mathbb{H}^{-s+2\beta,q}}+ \| \veltwo\|_{\mathbb{H}^{-s+2\beta,q}}\big),
\end{align*}
and
\begin{align*}
     \| \nabla\cdot (\nabla\dir\odot\nabla\dir)-  \nabla\cdot (\nabla\dirtwo\odot\nabla\dirtwo)\|_{H^{-s,q}} 
     &\leq \| (\nabla\dir-\nabla\dirtwo)\odot\nabla\dir\|_{H^{1-s,q}}
     + \| \nabla\dirtwo\odot(\nabla\dir-\nabla\dirtwo)\|_{H^{1-s,q}}\\
     &\lesssim \| \nabla\dir-\nabla\dirtwo\|_{L^{q_1}} \|\nabla\dir\|_{H^{1-s,q_2}}
     + \|\nabla\dir\|_{L^{q_1}} \|\nabla\dir-\nabla\dirtwo\|_{H^{1-s,q_2}}\\
     &\quad + \| \nabla\dirtwo\|_{L^{q_1}} \|\nabla\dir-\nabla\dirtwo\|_{H^{1-s,q_2}}
     + \|\nabla\dir-\nabla\dirtwo\|_{L^{q_1}} \|\nabla\dirtwo\|_{H^{1-s,q_2}}\\
     &\lesssim \| \dir-\dirtwo\|_{{H}^{1-s+2\beta,q}}
     \big(\| \dir\|_{{H}^{1-s+2\beta,q}}+ \| \dirtwo\|_{{H}^{1-s+2\beta,q}}\big).
\end{align*}
Lastly, using Remark \ref{rem:telescope young}, define the matrix-valued polynomial
\[
\Psi(\dir,\dirtwo):=\int_0^1 D\Phi\big(\dirtwo+\theta(\dir-\dirtwo)\big)\,\mathrm d\theta.
\]
Then
\[
\Phi(\dir)-\Phi(\dirtwo)=\Psi(\dir,\dirtwo)(\dir-\dirtwo),
\qquad
|\Psi(\dir,\dirtwo)|\lesssim 1+|\dir|^{2k}+|\dirtwo|^{2k}.
\]
Let $p_0,p_1\in(1,\infty)$ be exponents to be specified below such that
\[
\frac1q=\frac1{p_1}+\frac{2k}{p_0}.
\]
Then, applying \eqref{eq:paraproduct-Td} with $r=1-s$ and estimating $\Psi$ term by term, we obtain
\begin{align*}
    \|\Phi(\dir)-\Phi(\dirtwo)\|_{H^{1-s,q}}
    &\lesssim \| {\Psi(\dir,\dirtwo)(\dir - \dirtwo)} \|_{H^{1-s,q}}\\
    &\lesssim \|\dir - \dirtwo\|_{H^{1-s,p_1}} \|\Psi(\dir,\dirtwo)\|_{L^{\frac{p_0}{2k}}}
    + \|\dir - \dirtwo\|_{L^{p_0}} \|\Psi(\dir,\dirtwo)\|_{H^{1-s,\tilde p}}\\
    &\lesssim \|\dir - \dirtwo\|_{H^{1-s,p_1}}
    \bigl(1+\|\dir\|_{L^{p_0}}^{2k}+\|\dirtwo\|_{L^{p_0}}^{2k}\bigr)\\
    &\qquad + \|\dir - \dirtwo\|_{L^{p_0}}\Big[1+
    \bigl(\|\dir\|_{H^{1-s,p_1}}+\|\dirtwo\|_{H^{1-s,p_1}}\bigr)
    \bigl(1+\|\dir\|_{L^{p_0}}^{2k-1}+\|\dirtwo\|_{L^{p_0}}^{2k-1}\bigr)\Big]\\
    &\lesssim \|\dir - \dirtwo\|_{{H}^{1-s+2\beta,q}}
    \bigl(1+\|\dir\|_{{H}^{1-s+2\beta,q}}^{2k}+\|\dirtwo\|_{{H}^{1-s+2\beta,q}}^{2k}\bigr),
\end{align*}
where $\tilde p$ is determined by
\[
\frac1{\tilde p}=\frac1{p_1}+\frac{2k-1}{p_0}.
\]
We now specify the parameters. Choose $\beta\in[0,1)$ such that
\begin{equation}\label{eq:beta-range-neg-proof}
\max\Bigl\{1-\frac{1+\kappa}{p},\,\frac14\Bigl(1+s+\frac{d}{q}\Bigr)\Bigr\}
\leq \beta\leq 1-\frac{2k}{1+2k}\frac{1+\kappa}{p}.
\end{equation}
Choose additionally $\beta>1-\frac{1+\kappa}{p}$. The interval is nonempty by \eqref{eq:subcritneg2}, and this strict choice is possible because its upper endpoint exceeds the trace index by $\frac{1+\kappa}{p(1+2k)}>0$. For the bilinear terms, we choose $q_1,q_2$ such that
\[
\frac1q=\frac1{q_1}+\frac1{q_2},
\qquad
H^{-s+2\beta,q}\hookrightarrow L^{q_1},
\qquad
H^{-s+2\beta,q}\hookrightarrow H^{1-s,q_2}.
\]
These embeddings are possible whenever
\[
2\beta\geq 1
\qquad\text{and}\qquad
4\beta\geq 1+s+\frac{d}{q}.
\]
Since $\beta\geq 1-\frac{1+\kappa}{p}>\frac12$, the first condition is automatic. Hence the bilinear terms are controlled under the lower bound
\[
\beta\geq \frac14\Bigl(1+s+\frac{d}{q}\Bigr).
\]
For the potential term, we choose $p_0,p_1$ such that
\[
\frac1q=\frac1{p_1}+\frac{2k}{p_0},
\qquad
H^{1-s+2\beta,q}\hookrightarrow H^{1-s,p_1},
\qquad
H^{1-s+2\beta,q}\hookrightarrow L^{p_0}.
\]
Such exponents exist whenever
\[
\beta\geq \frac{k}{2k+1}\Bigl(s-1+\frac{d}{q}\Bigr).
\]
This condition is weaker than the lower bound
\[
\beta\geq \frac14\Bigl(1+s+\frac{d}{q}\Bigr)
\]
appearing in \eqref{eq:beta-range-neg-proof}, and is therefore automatically satisfied. Indeed, if $s-1+d/q\leq0$ the potential threshold is immediate; otherwise use $0<s-1+d/q<2$ and the same comparison as in the preceding proof.

The strict lower bound $\beta>1-\frac{1+\kappa}{p}$ is the one required by the Agresti--Veraar framework, whereas the upper bound
\[
\beta\leq 1-\frac{2k}{1+2k}\frac{1+\kappa}{p}
\]
is exactly the {(sub)criticality} condition for the polynomial nonlinearity $\Phi$, corresponding to $\rho=2k$.
Combining the estimates above and recalling \eqref{eq:complexinterpolLp}, we conclude that
\begin{equation}\label{eq:F-local-lipschitz-neg-proof}
\|F(\vel,\dir)-F(\veltwo,\dirtwo)\|_{X_0}
\lesssim
\|(\vel-\veltwo,\dir-\dirtwo)\|_{X_\beta}
\bigl(1+\|(\vel,\dir)\|_{X_\beta}^{2k}+\|(\veltwo,\dirtwo)\|_{X_\beta}^{2k}\bigr).
\end{equation}
For $G$, the trace-space embeddings from the preceding proof remain valid under the additional restriction $d/q<2-s-2(1+\kappa)/p$. Here $1-s\in[1,2)$ and $2-s\in[2,3)$. The $C^3$ regularity of $(g_{1,n})_n$ and the $C^4$ regularity of $(g_{2,n})_n$, together with the trace-space $L^\infty$ bounds, therefore give the same composition estimate \eqref{eq:noise-trace-local-lipschitz}. Also $G(0)\in\gamma(\ell^2,X_{1/2})$. Together with \eqref{eq:F-local-lipschitz-neg-proof} and $X_{\beta,1}\hookrightarrow X_\beta$, this verifies \cite[Assumption~4.1]{agresti2025nonlinear}, using the drift pair $(\beta,2k)$ and the noise pair $(\beta,0)$. The drift admissibility follows from \eqref{eq:beta-range-neg-proof}, while the noise pair is admissible since $1-(1+\kappa)/p<\beta<1$. Proposition~\ref{prop:SMR} and \cite[Theorem~4.7]{agresti2025nonlinear} now give the unique maximal solution and its stated regularity.
\end{proof}

\section{Blow-up criteria}
\label{sec: blow-up criteria}

\noindent 
Throughout this section we work on $\Dom=\T^d$ and under Assumption~\ref{ass:loc well-posed2}. Fix $s\in[1,2)$, $p,q\in(2,\infty)$ and $\kappa\in\big[0,\frac p2-1\big)$, and assume the hypotheses of Proposition \ref{prop:loc well-posed Lp}. Let $Y=(\vel,\dir)$ and let $(Y,\sigma)$ be the corresponding maximal local solution with
$$
Y\in H^{\theta,p}_{\loc}(0,\sigma,w_\kappa;X_{1-\theta})\cap C([0,\sigma);X_{1-\frac{1+\kappa}{p},p})
\qquad\text{for all }\theta\in[0,1/2).
$$
Recall that the trace space is given in \eqref{eq:tracespaceLp}, i.e. 
$$
X_{1-\frac{1+\kappa}{p},p}
=\mathbb{B}^{-s+2-\frac{2(1+\kappa)}{p}}_{q, p}(\T^d;\R^d)\times
\rB^{-s+3-\frac{2(1+\kappa)}{p}}_{q, p}(\T^d;\R^d),
$$
and the complex interpolation spaces are given by \eqref{eq:complexinterpolLp}, i.e.
$$
X_{1-\frac{\kappa}{p}}
=\mathbb{H}^{-s+2-\frac{2\kappa}{p},q}(\T^d)\times H^{3-s-\frac{2\kappa}{p},q}(\T^d;\R^d).
$$

\begin{thm}[Blow-up criteria for \eqref{eq:MLC0} in the $L^p$--$L^q$ setting]
Let $(Y,\sigma)$ be the maximal solution from Proposition~\ref{prop:loc well-posed Lp}. Then the following hold.
\begin{enumerate}
\item[(i)] (No trace limit at the explosion time.)
For every $T>0$,
$$
\mathbb{P}\left(\sigma<T \, \text{ and }\, \lim_{t\uparrow\sigma}Y(t)\ \text{exists in }X_{1-\frac{1+\kappa}{p},p}\right)=0.
$$
\item[(ii)] (Critical blow-up criterion.)
For every $T>0$,
$$
\mathbb{P}\left(\sigma<T \, \text{ and }\, 
\sup_{t\in[0,\sigma)} \|Y(t)   \|_{X_{1-\frac{1+\kappa}{p},p}}
+\| Y  \|_{L^p(0,\sigma;X_{1-\frac{\kappa}{p}})}<\infty\right)=0.
$$
Equivalently, on $\{\sigma<\infty\}$ we have almost surely
$$
\sup_{t\in[0,\sigma)} \|Y(t)\|_{X_{1-\frac{1+\kappa}{p},p}}
+\|Y\|_{L^p(0,\sigma;X_{1-\frac{\kappa}{p}})}=+\infty.
$$
\item[(iii)] (Subcritical blow-up criterion.)
Assume $(p, \kappa) $ is subcritical (in the sense that the criticality condition (4.3) in \cite{agresti2025nonlinear}) holds with strict inequality for every pair $(\beta_j, \rho_j)$ appearing in the non-linear estimate for $F$ and $G$). Then for every $T>0$,
$$
\mathbb{P}\left(\sigma<T  \text{ and } 
\sup_{t\in[0,\sigma)} \|Y(t) \|_{X_{1-\frac{1+\kappa}{p},p}}<\infty\right)=0,
$$
i.e. on $\{\sigma<\infty\}$ we have almost surely
$$
\sup_{t\in[0,\sigma)}\|Y(t)\|_{X_{1-\frac{1+\kappa}{p},p}}=+\infty.
$$
\end{enumerate}
\label{thm:blowup}
\end{thm}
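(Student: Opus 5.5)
The plan is to derive all three statements from the abstract blow-up criteria of \cite{agresti2025nonlinear}, namely \cite[Theorem~4.10]{agresti2025nonlinear} and its variants. These apply to any maximal solution obtained through \cite[Theorem~4.7]{agresti2025nonlinear} under \cite[Assumption~4.1]{agresti2025nonlinear} together with stochastic maximal regularity. So the first step is bookkeeping. We recall that the proof of Proposition~\ref{prop:loc well-posed Lp} verified \cite[Assumption~4.1]{agresti2025nonlinear} with the pairs $(\beta_1,\rho_1)=(\beta,1)$ and $(\beta_2,\rho_2)=(\hat\beta,2k)$, as recorded in Remark~\ref{rem:lipgrowth}. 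Proposition~\ref{prop:SMR} gives $(A,B)\in\mathcal{SMR}^\bullet_{r,\alpha}$ for every $r\in(2,\infty)$ and $\alpha\in[0,\frac r2-1)$, not only for $(p,\kappa)$. This uniformity in $(r,\alpha)$ is exactly what the abstract criteria require, since their proofs pass through a regularisation or weight-shifting argument in other $(r,\alpha)$-frameworks.

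For (i), we apply the first part of the abstract theorem directly. The maximal solution cannot have $\sigma<T$ while $\lim_{t\uparrow\sigma}Y(t)$ exists in the trace space. The argument restarts the equation at the limit point using local existence in $X_{1-\frac{1+\kappa}{p},p}$, which would contradict maximality. No model-specific input is needed beyond the hypotheses already verified.

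For (ii), we use the critical criterion. It requires the setting to be critical or subcritical, i.e.\ (4.3) in \cite{agresti2025nonlinear}, $\frac{1+\kappa}{p}\le\frac{1+\rho_j}{\rho_j}(1-\beta_j)$, possibly with equality. This is precisely the upper bound in \eqref{eq:subcrit1} and \eqref{eq:beta2} for $\beta$ and $\hat\beta$. The abstract statement is phrased with the norm $\|Y\|_{L^p(0,\sigma;X_{1-\frac{\kappa}{p}})}$, unweighted, and identifying $X_{1-\frac\kappa p}$ via \eqref{eq:complexinterpolLp} gives the displayed form. The main subtlety, and the expected obstacle, is that the abstract critical criterion for $\kappa>0$ is proved by passing to an unweighted or differently weighted framework $(r,\alpha)$ with $\frac{1+\alpha}{r}\le\frac{1+\kappa}{p}$. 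There the exponents $\beta_j$ may have to be raised while keeping the nonlinear estimates and admissibility. This is exactly what Remark~\ref{rem:lipgrowth} supplies: it chooses $\beta_j'\ge\beta_j$ in the admissible window and uses $X_{\beta_j',1}\hookrightarrow X_{\beta_j,1}$. I would check carefully that these raised exponents are allowed by the hypotheses of the abstract instantaneous-regularisation result. I would also check that the growth bound \eqref{eq:kappa0-growth_rem}, needed for the $\kappa=0$ sub-case of the abstract proof, holds in the form required.

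For (iii), we assume strict inequality in (4.3) for every pair $(\beta_j,\rho_j)$, and we invoke the subcritical part of the abstract theorem. In that case boundedness of the trace norm alone forces $\sigma=\infty$ a.s.\ on the relevant event. Heuristically, strict subcriticality lets one bound $\|Y\|_{L^p(X_{1-\kappa/p})}$ by the trace norm through a Gronwall-type estimate on short intervals. Both equivalent formulations follow by taking $T\to\infty$ along integers and using a countable union of null sets.
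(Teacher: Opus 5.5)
Your proposal is correct and takes the same route as the paper. The paper's proof says only that the statement is a direct reformulation of the abstract blow-up criteria of \cite{agresti2025nonlinear}. Those criteria apply because the proof of Proposition~\ref{prop:loc well-posed Lp} already verified Assumption~4.1 with the pairs $(\beta,1)$ and $(\hat\beta,2k)$, and Proposition~\ref{prop:SMR} supplies stochastic maximal regularity.

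Two small corrections. The paper cites \cite[Theorems~5.1 and~5.2]{agresti2025nonlinear}, not a Theorem~4.10. The extra checks you flag, namely raising the $\beta_j$ in other $(r,\alpha)$-frameworks and the growth bound \eqref{eq:kappa0-growth_rem}, are not needed here; the paper uses them only for the regularisation results in Section~\ref{sec:regularisation}.
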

\begin{proof}[Proof of Theorem~\ref{thm:blowup}]
This is a direct reformulation, in the present setting, of the abstract blow-up criteria in \cite[Theorems 5.1 and Theorem 5.2]{agresti2025nonlinear}.
\end{proof}
\begin{rem}
The criterion in Theorem~\ref{thm:blowup}(ii)-(iii) is typically used as follows: if one can
prove \emph{a priori} that on $\{\sigma<T\}$ the quantity
$$
\sup_{t\in[0,\sigma)}\|Y(t)\|_{X_{1-\frac{1+\kappa}{p},p}}
\quad\text{(and in the critical case also }\|Y\|_{L^p(0,\sigma;X_{1-\frac{\kappa}{p}})}\text{)}
$$
is finite almost surely, then $\mathbb{P}(\sigma<T)=0$ for every $T>0$, hence $\sigma=\infty$ a.s.
\label{rem:blowup_use}
\end{rem}

\section{Regularisation in time}\label{sec:regularisation}

\noindent 
In Section \ref{sec: blow-up criteria} we proved blow-up criteria for the maximal solution
$(Y,\sigma)$ from Proposition~\ref{prop:loc well-posed Lp}.
In particular, Theorem~\ref{thm:blowup}(i) states that at the maximal existence time $\sigma$
the process $Y$ cannot have a limit in the critical trace space $X_{1-\frac{1+\kappa}{p},p}$.
We now show that this ``no-trace-limit'' property forces \emph{instantaneous time regularisation}:
for every $t>0$ the solution enjoys improved time integrability and fractional time regularity.
This will be the starting point of the bootstrap arguments in the next section.

\begin{prop}[Instantaneous time regularisation for $\kappa>0$]
\label{prop:time-regularisation-kappa-pos}
Let $s\in[1,2)$, $p,q\in(2,\infty)$, and let
$\kappa\in(0,\frac p2-1)$. Assume the hypotheses of
Proposition~\ref{prop:loc well-posed Lp}. Let $(Y,\sigma)$ be the
maximal local solution from Proposition~\ref{prop:loc well-posed Lp}. Then, almost surely, for every $r\in(2,\infty)$, every
$\theta\in[0,\frac12)$, and every $\varepsilon\in(0,\theta)$,
\begin{align}
Y &\in H^{\theta,r}_{\loc}((0,\sigma);X_{1-\theta}),
\label{eq: regularity Hteta r}\\
Y &\in C^{\theta-\varepsilon}_{\loc}((0,\sigma);X_{1-\theta}).
\label{eq:regularity_holder}
\end{align}
\end{prop}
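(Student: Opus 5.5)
The plan is to invoke the abstract instantaneous-regularisation result of \cite[Theorem~6.3 / Section 6]{agresti2025nonlinear} (the ``$(p,\kappa)\to(r,\alpha)$'' bootstrap for time weights), which states that if $(A,B)\in\mathcal{SMR}^\bullet_{r,\alpha}$ for the new exponents, $F,G$ satisfy Assumption~4.1 in both the $(p,\kappa)$ and the $(r,\alpha)$ frameworks, and $\frac{1+\alpha}{r}\le\frac{1+\kappa}{p}$ (so that $X^{\mathrm{Tr}}_{\kappa,p}\hookrightarrow X^{\mathrm{Tr}}_{\alpha,r}$ or at least the relevant embedding after a small shift in time), then the maximal $L^p_\kappa$-solution coincides with the maximal $L^r_\alpha$-solution started at any positive time, and hence inherits the $H^{\theta,r}_{\loc}$ regularity on $(0,\sigma)$. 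The structural inputs are already available: Proposition~\ref{prop:SMR} gives $(A,B)\in\mathcal{SMR}^\bullet_{r,\alpha}$ for every $r\in(2,\infty)$, $\alpha\in[0,\frac r2-1)$, and Remark~\ref{rem:lipgrowth} gives Assumption~4.1 in the $(r,\alpha)$ framework whenever $\frac{1+\alpha}{r}\le\frac{1+\kappa}{p}$, with enlarged exponents $\beta_j'$.

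Concretely, I would proceed in two steps. First, for fixed $r$, use that $\kappa>0$: since $Y\in L^p_\kappa(0,\sigma;X_1)$ and $Y\in C((0,\sigma);X_{1-\frac1p,p})$ (the unweighted trace space, obtained from the weighted solution regularity on $[\delta,\sigma)$ where the weight is harmless), one gains trace regularity $X_{1-\frac{1}{p},p}$ at positive times, which is strictly better than $X_{1-\frac{1+\kappa}{p},p}$. This is the point where $\kappa>0$ is used: it produces slack. Second, choose $\alpha\in[0,\frac r2-1)$ with $\frac{1+\alpha}{r}\le\frac{1}{p}$ (for $r\le p$ take $\alpha=0$; for $r>p$ choose $\alpha=\frac{r}{p}-1<\frac r2-1$), restart the equation at a small time $t_0>0$ with datum $Y(t_0)\in X_{1-\frac1p,p}\hookrightarrow X_{1-\frac{1+\alpha}{r},r}$ (via the standard Besov embedding $B^{a}_{q,p}\hookrightarrow B^{a-\epsilon}_{q,r}$ if needed, absorbing $\epsilon$ into the choice of $\alpha$), obtain a maximal $L^r_\alpha$ solution on $[t_0,\tau)$, and argue $\tau=\sigma$ by uniqueness in the larger class plus Theorem~\ref{thm:blowup}(i): if $\tau<\sigma$ the $L^r_\alpha$ solution would have a trace limit at $\tau$ (since $Y$ is continuous there in the $p$-trace space, which embeds), contradicting the $(r,\alpha)$ no-trace-limit criterion. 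Letting $t_0\downarrow0$ along rationals gives \eqref{eq: regularity Hteta r} almost surely simultaneously for a countable dense set of $r$, and then all $r$ by monotonicity of $H^{\theta,r}_{\loc}$ in $r$.

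Finally \eqref{eq:regularity_holder} follows from \eqref{eq: regularity Hteta r} by the Sobolev embedding $H^{\theta,r}(I;E)\hookrightarrow C^{\theta-\frac1r}(\bar I;E)$ on compact $I\subset(0,\sigma)$, choosing $r>1/\varepsilon$. The main obstacle I expect is the rigorous identification step: showing the restarted $(r,\alpha)$-solution agrees with $Y$ on its whole lifespan and that its lifetime equals $\sigma$, which requires the stopping-time localisation (working on $\{t_0<\sigma\}$ and with $\mathscr F_{t_0}$-measurable data truncated to bounded sets) and careful matching of trace spaces; I would handle it by appealing directly to the abstract statement in \cite[Section 6]{agresti2025nonlinear}, which packages exactly this argument, after verifying its hypotheses as above.
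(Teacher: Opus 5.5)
Your plan is essentially the paper's proof: the paper invokes the abstract regularisation theorem \cite[Theorem~5.6]{agresti2025nonlinear} (not a Section~6 result, since Section~6 of that reference is the variational setting). Its hypotheses are supplied exactly as you propose by Proposition~\ref{prop:loc well-posed Lp}, Proposition~\ref{prop:SMR} and Remark~\ref{rem:lipgrowth}, and the H\"older bound comes from that theorem's conclusion. Your restart-and-blow-up sketch is a sound unpacking of what the abstract theorem packages; the one slip is that $\alpha=0$ does not give $\frac{1+\alpha}{r}\le\frac1p$ when $r<p$, but that range is already covered by your monotonicity-in-$r$ remark.
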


\begin{proof}
This is a direct application of the abstract parabolic regularisation
result \cite[Theorem~5.6]{agresti2025nonlinear} to the equation
\eqref{eq:MLC0}. Indeed, the proof of Proposition \ref{prop:loc well-posed Lp} verifies
the hypotheses of \cite[Theorem~4.7]{agresti2025nonlinear} in the
$(p,\kappa)$-framework. Moreover, Proposition~\ref{prop:SMR} gives
$$
(A,B) \in \mathcal{SMR}^{\bullet}_{r,\alpha}
\qquad \text{for every } r\in(2,\infty),\quad
\alpha\in\Big[0,\frac{r}{2}-1\Big),
$$
and Remark \ref{rem:lipgrowth} records that the nonlinear estimates
required in \cite[Assumption 4.1]{agresti2025nonlinear} are satisfied
for the present equation and are stable under the change of parameters
used in the proof of \cite[Theorem~5.6]{agresti2025nonlinear}.

Therefore all assumptions of \cite[Theorem 5.6]{agresti2025nonlinear}
are satisfied, and the conclusion of that theorem gives exactly
\eqref{eq: regularity Hteta r} and \eqref{eq:regularity_holder}.
\end{proof}

\begin{prop}[Instantaneous time regularisation for $\kappa=0$]
\label{prop:time regularisation kappa0}
Let $s\in[1,2)$ and $p,q\in(2,\infty)$. Assume the hypotheses of
Proposition~\ref{prop:loc well-posed Lp} with $\kappa=0$, and let
$(Y,\sigma)$ be the corresponding maximal local solution. Then, almost surely,
for every $r\in(2,\infty)$, every $\theta\in[0,\frac12)$, and every
$\varepsilon\in(0,\theta)$,
\begin{align}
Y &\in H^{\theta,r}_{\loc}((0,\sigma);X_{1-\theta}),
\label{eq:reg kappa 0 H theta}\\
Y &\in C^{\theta-\varepsilon}_{\loc}((0,\sigma);X_{1-\theta}).
\label{eq:reg kappa 0 holder}
\end{align}
\end{prop}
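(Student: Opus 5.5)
The plan is to reduce the case $\kappa=0$ to Proposition~\ref{prop:time-regularisation-kappa-pos}. The mechanism is that the solution, once it has started, lives in a better space at any positive time. From there it can be restarted with a positive weight. Alternatively, one can invoke directly the $\kappa=0$ version of \cite[Theorem~5.6]{agresti2025nonlinear} (or its corollary for unweighted data). That version requires a slight increase of the time integrability exponent $p$ to some $p_1>p$, after which the weighted result applies. I will follow the first route, since it uses only results already stated in the paper.

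\textbf{Step 1 (integrability gain).} Fix $p_1\in(p,\infty)$ and choose $\kappa_1\in(0,\frac{p_1}{2}-1)$ with $\frac{1+\kappa_1}{p_1}=\frac{1}{p}$. This is possible because $\frac{1}{p_1}<\frac1p<\frac12$. With this choice the trace spaces coincide: $X_{1-\frac{1}{p},p}$ and $X_{1-\frac{1+\kappa_1}{p_1},p_1}$ have the same smoothness index. They differ only in the microscopic Besov parameter ($p$ versus $p_1$). Since $p<p_1$, the embedding $\mathrm{B}^{\cdot}_{q,p}\hookrightarrow \mathrm{B}^{\cdot}_{q,p_1}$ holds. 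Hence the initial datum $Y_0$ lies in the new trace space $X_{1-\frac{1+\kappa_1}{p_1},p_1}$.

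\textbf{Step 2 (checking the new framework).} The conditions \eqref{eq:subcrit1}, \eqref{eq:subcrit2} involve $(p,\kappa)$ only through $\frac{1+\kappa}{p}$, so they remain valid for $(p_1,\kappa_1)$. Remark~\ref{rem:lipgrowth} confirms that \cite[Assumption~4.1]{agresti2025nonlinear} persists whenever $\frac{1+\alpha}{r}\le\frac{1+\kappa}{p}$. Proposition~\ref{prop:SMR} gives $(A,B)\in\mathcal{SMR}^\bullet_{p_1,\kappa_1}$. Proposition~\ref{prop:loc well-posed Lp} therefore produces a maximal $L^{p_1}_{\kappa_1}$-solution $(\tilde Y,\tilde\sigma)$ with the same datum.

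\textbf{Step 3 (identifying the two solutions).} This is the main obstacle: one must show $\tilde Y=Y$ and $\tilde\sigma=\sigma$.

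For $\tilde Y=Y$ on $[0,\sigma\wedge\tilde\sigma)$, both processes are local solutions in the common weaker setting. The embedding $L^{p}_{\mathrm{loc}}([0,T);X_1)\hookrightarrow L^{p_1}_{\kappa_1,\mathrm{loc}}$ fails in general, but the reverse embedding holds by H\"older. Indeed, $\frac{\kappa_1}{p_1}\cdot\frac{p}{p_1-p}\cdot$ integrability of $t^{-\kappa_1 p/(p_1-p)}$ is guaranteed exactly by $\frac{1+\kappa_1}{p_1}=\frac1p$ up to the endpoint. Since the relation is an endpoint, I would perturb slightly, taking $\frac{1+\kappa_1}{p_1}<\frac1p$ with the trace inclusion $X_{1-\frac1p,p}\hookrightarrow X_{1-\frac{1+\kappa_1}{p_1},p_1}$ still valid. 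The $L^{p_1}_{\kappa_1}$-solution is then an $L^p$-solution, and uniqueness in Proposition~\ref{prop:loc well-posed Lp} gives agreement.

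For $\tilde\sigma=\sigma$, I would use the blow-up criterion Theorem~\ref{thm:blowup}(i) in both frameworks. On $\{\tilde\sigma<\sigma\}$, the path $Y$ is continuous at $\tilde\sigma$ in $X_{1-\frac1p,p}\hookrightarrow X_{1-\frac{1+\kappa_1}{p_1},p_1}$, so $\tilde Y$ has a trace limit, which contradicts (i). The converse uses instantaneous regularisation: for $t>0$ the weighted solution lies in $C((0,\tilde\sigma);X_{1-\frac{1}{p_1},p_1})$, a stronger space than needed. If instead one only obtains $\tilde\sigma\le\sigma$, this already suffices: the regularity can be transferred on $(0,\tilde\sigma)$ and then propagated past it by restarting at positive times from smooth data. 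Either way, the conclusion of Proposition~\ref{prop:time-regularisation-kappa-pos} holds for $\tilde Y$.

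\textbf{Step 4 (conclusion).} Apply Proposition~\ref{prop:time-regularisation-kappa-pos} to $(\tilde Y,\tilde\sigma)=(Y,\sigma)$. This yields \eqref{eq:reg kappa 0 H theta} and \eqref{eq:reg kappa 0 holder} for all $r$, $\theta$, $\varepsilon$, since that proposition is stated for arbitrary $r$ independently of $p_1$.
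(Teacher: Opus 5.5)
Your Step 3 has a genuine gap, and it is exactly where the unweighted case differs from the weighted one. To identify $Y$ with the $L^{p_1}_{\kappa_1}$-solution $\tilde Y$ by uniqueness, one of the two solution classes must contain the other. At $\frac{1+\kappa_1}{p_1}=\frac1p$ neither does:
\begin{itemize}
\item $L^p$ does not embed into $L^{p_1}_{\kappa_1}$;
\item the H\"older argument for $L^{p_1}_{\kappa_1}(0,T;X_1)\hookrightarrow L^p(0,T;X_1)$ needs $t^{-\kappa_1 p/(p_1-p)}=t^{-1}$ to be integrable near $0$, and it is not.
\end{itemize}
Your fix, taking $\frac{1+\kappa_1}{p_1}<\frac1p$, breaks Step 1. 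The trace space $X_{1-\frac{1+\kappa_1}{p_1},p_1}$ then has smoothness index $1-\frac{1+\kappa_1}{p_1}>1-\frac1p$. It is therefore strictly smaller than $X_{1-\frac1p,p}$, and $Y_0$ is in general not an admissible initial datum. Restarting at a positive time does not help either: for $\kappa=0$ nothing better than $Y(\varepsilon)\in X_{1-\frac1p,p}$ is known a priori, and that is the whole difficulty. The H\"older embedding and the trace inclusion pull in opposite directions. Your comparison of $\sigma$ and $\tilde\sigma$ via Theorem~\ref{thm:blowup}(i) presupposes $Y=\tilde Y$, so it inherits the gap.

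What is missing is a way to upgrade the unweighted solution itself to a local $L^{p_1}_{\kappa_1}$-solution. Uniqueness and trace-space comparisons cannot supply this. In the abstract theory the upgrade comes from the nonlinear structure. A growth bound on $F$ and $G$ in interpolation norms is combined with the time regularity $Y\in H^{\theta,p}_{\loc}(0,\sigma;X_{1-\theta})$ and stochastic maximal regularity. This improves the time integrability of the right-hand side, and hence of $Y$. That is what \cite[Theorem~5.7]{agresti2025nonlinear} provides, and the paper simply applies it. Beyond the $(p,0)$ local theory and Proposition~\ref{prop:SMR}, its only extra hypothesis is the growth condition \cite[(5.21)]{agresti2025nonlinear}. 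This follows from estimate \eqref{eq:kappa0-growth_rem} of Remark~\ref{rem:lipgrowth}, with a constant independent of the trace norm. The alternative you mention in passing is therefore the right route, but it is not a formality. Your proposal never uses a growth bound of this kind, which is the clearest sign that the key step is missing.
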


\begin{proof}
This is a direct application of \cite[Theorem 5.7]{agresti2025nonlinear}. Indeed, Proposition \ref{prop:loc well-posed Lp} verifies the hypotheses of
\cite[Theorem 4.7]{agresti2025nonlinear} in the $(p,0)$-framework and yields
the maximal $L^p_0$-solution $(Y,\sigma)$. Moreover,
Proposition~\ref{prop:SMR} gives
$$
(A,B) \in \mathcal{SMR}^{\bullet}_{r,\alpha} \qquad \text{for all } r \in (2,\infty), \quad
\alpha \in [0, \frac{r}{2}-1).
$$
It remains only to check the additional growth assumption
\cite[(5.21)]{agresti2025nonlinear}. In the notation of
\cite[Assumption~4.1]{agresti2025nonlinear}, Remark~\ref{rem:lipgrowth}
identifies the two pairs
$$
(\beta_1,\rho_1) = (  \beta , 1 ), \qquad
(\beta_2 , \rho_2 ) = (\hat\beta,2k),
$$
and proves the estimate
$$
\|F(x)\|_{X_0}
+ \|G(x)\|_{\gamma(\mathcal{U},X_{1/2})}
\leq C ( 1+\|x\|_{X_{\beta,1}}^2 +\|x\|_{X_{\hat\beta,1}}^{2k+1}),
\qquad x\in X_1.
$$
This is stronger than \cite[(5.21)]{agresti2025nonlinear}: one can choose
$C_n=C$ for every $n$, since the right-hand side is independent of any
bound on the trace norm
$\|x\|_{X_{1-\frac{1}{p},\infty}}$.

Therefore all assumptions of \cite[Theorem 5.7]{agresti2025nonlinear}
are satisfied, and its conclusion gives
\eqref{eq:reg kappa 0 H theta} and \eqref{eq:reg kappa 0 holder}.
\end{proof}

\section{Bootstrap in space}\label{sec:bootstrap-space}
\noindent 
In this section we first keep the Sobolev index $s$ fixed and improve only the spatial
integrability exponent $q$. The argument is based on the positive-time regularisation
proved in Section \ref{sec:regularisation}. Indeed, for every $\varepsilon>0$, on the event $\{\sigma>\varepsilon\}$ the random value $Y(\varepsilon)$ belongs to a better space than the original trace space, and this
allows us to restart the equation in an $L^p$-$L^{q_*}$ framework with $q_*>q$.
By pathwise uniqueness, the restarted solution coincides with the original one.

Let $(Y,\sigma)$ be the maximal local solution from Proposition \ref{prop:loc well-posed Lp},
with
$$
Y=( \vel,  \dir).
$$
Fix $q_*\in ( q, \infty)$ and define
$$
X_0^{(q_*)} : = \mathbb{H}^{ -s, q_*}( \T ^d) \times H^{1-s,q_*} (\T ^d ; \R ^d),
\qquad
X_1^{ (q_*) }:= \mathbb{ H }^{ 2-s, q_*}( \T^d)\times H^{3- s, q_*}( \T^d; \R^d),
$$
and, for $\theta \in [0,1]$,
$$
X_\theta^{ (q_* )}: = \mathbb{H}^{-s+2 \theta, q_*}(\T^d)\times H^{1-s+2 \theta, q_* }( \T^d ; \R^d ).
$$
Recall that the trace space is given in \eqref{eq:tracespaceLp}, i.e. 
$$
X_{1- \frac {1+\kappa} {p} ,p}^{(q_*)}
=
\mathbb{B}^{-s + 2 - \frac{2(1+ \kappa)}{p}}_{q_*,p}(\T^d; \R ^d) \times \rB^{-s+ 3-\frac{ 2( 1+\kappa)}{p}}_{ q_* , p }(\T^d ; \R^d).
$$

\begin{prop}[Bootstrap in $L^q$]\label{prop: bootstrap in Lq}
Let $s\in[1,2)$, $p,q\in(2,\infty)$, and
$\kappa\in[0,\frac p2-1)$.
Assume the hypotheses of Proposition~\ref{prop:loc well-posed Lp}.
Let $(Y,\sigma)$ be the maximal local solution. Let $ q_* > q$ be such that Proposition \ref{prop:loc well-posed Lp} remains valid with $q$ replaced by $q_*$. In particular, assume
\begin{align}
1-\frac{1+\kappa}{p} & \leq \frac{d-(s-1)q_*}{4q_*}+ \frac{s}{2} \leq 1 - \frac{1+\kappa}{2p}, \label{eq:bootstrap-q-subcrit1}\\
\frac{1}{k} & > s - 3+ \frac{d}{q_*}+2\left(\frac{1+\kappa}{p}\right), \label{eq:bootstrap-q-subcrit2}\\
q_* & \leq \frac{d}{s-1}, \quad \text{ if } s >1. \label{eq:bootstrap-q-subcrit3}
\end{align}
Assume moreover that there exists $\beta \in(0, \min \{\frac{1}{2}, \frac{1+\kappa}{p}\})$ such that
\begin{equation}\label{eq:bootstrap-q-window}
0<\frac{d}{q}-\frac{d}{q_*}<2 \left( \frac{1+\kappa}{p}- \beta \right).
\end{equation}
Then almost surely,
$$
Y \in L^p_{\loc}(0,\sigma;X_1^{(q_*)})
\cap C((0,\sigma);X_{1-\frac{1+\kappa}{p} , p}^{ (q_*)}).
$$
In particular,
$$
\vel \in L^p_{\loc}(0,\sigma;  \mathbb{H}^{2-s,q_*}( \T^d)), \qquad \dir \in L^p_{\loc}(0, \sigma; H^{3-s,q_*}( \T^d; \R^d)).
$$
\end{prop}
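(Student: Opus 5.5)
The plan is to restart the equation at a positive time in the $q_*$-framework and then identify the restarted solution with $Y$ by uniqueness. I proceed in four steps.

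\emph{Step 1: regularity of $Y(t_0)$ in the $q_*$-trace space.} Fix $\beta$ as in \eqref{eq:bootstrap-q-window}. By Propositions~\ref{prop:time-regularisation-kappa-pos} and \ref{prop:time regularisation kappa0} (with $\theta=\beta$ and any $r$), almost surely $Y\in C((0,\sigma);X_{1-\beta})$. Here
$$X_{1-\beta}=\mathbb H^{2-s-2\beta,q}\times H^{3-s-2\beta,q}.$$
By Sobolev embedding on $\T^d$,
$$H^{2-s-2\beta,q}\hookrightarrow B^{2-s-2\beta-\frac dq+\frac d{q_*}}_{q_*,p}\hookrightarrow B^{2-s-\frac{2(1+\kappa)}p}_{q_*,p}.$$
The second embedding is exactly the right inequality in \eqref{eq:bootstrap-q-window}, which gives a strict gain of smoothness, and the same holds for the director component. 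Hence
$$X_{1-\beta}\hookrightarrow X^{(q_*)}_{1-\frac{1+\kappa}p,p}.$$
Consequently, for any deterministic $t_0>0$, on $\{\sigma>t_0\}$ the random variable $Y(t_0)$ is $\mathscr F_{t_0}$-measurable with values in the $q_*$-trace space. The same then holds for $\kappa=0$ as well, which lets us restart with the unweighted ($\kappa=0$) or the weighted framework as convenient. I would restart with the same $\kappa$, since the hypotheses \eqref{eq:bootstrap-q-subcrit1}--\eqref{eq:bootstrap-q-subcrit3} are stated for that $\kappa$.

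\emph{Step 2: restart.} Set $Y_0^{t_0}:=\mathbf 1_{\{\sigma>t_0\}}Y(t_0)$ and the shifted Brownian motion $W^{t_0}_t:=W_{t_0+t}-W_{t_0}$ with filtration $\mathscr F_{t_0+t}$. Proposition~\ref{prop:loc well-posed Lp} with $q$ replaced by $q_*$ is assumed valid; in particular \eqref{eq:bootstrap-q-subcrit1} and \eqref{eq:bootstrap-q-subcrit2} are its subcriticality conditions. It yields a maximal solution $(Z,\tau)$ with
$$Z\in L^p_{\loc}(0,\tau,w_\kappa;X_1^{(q_*)})\cap C([0,\tau);X^{(q_*)}_{1-\frac{1+\kappa}p,p}).$$
The weight is harmless away from $0$. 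Moreover, the constant coefficients are unchanged by the shift.

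\emph{Step 3: identification (main obstacle).} We want $Z(t)=Y(t_0+t)$ on $[0,\tau\wedge(\sigma-t_0))$ and $\tau= \sigma-t_0$ on $\{\sigma>t_0\}$. Since $q_*>q$, we have $X^{(q_*)}_\theta\hookrightarrow X_\theta$ on the torus, so $Z$ is also an $L^p_\kappa$-solution in the $q$-framework started from $Y(t_0)$. I expect two difficulties here. First, the trace space $X^{(q_*)}_{1-\frac{1+\kappa}p,p}$ embeds into the $q$-trace space, but the weighted $L^p_\kappa$ setting for $Y(t_0+\cdot)$ must be checked. This is fine, because $Y(t_0+\cdot)\in L^p(0,T;X_1)$ and $L^p\subset L^p_\kappa$ locally, and the required continuity holds. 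Second, I apply uniqueness of the maximal $q$-solution restarted at $t_0$, which is the shifted $Y$ by the maximality/localization property in \cite{agresti2025nonlinear}. This gives $Z=Y(t_0+\cdot)$ on the common interval and $\tau\le\sigma-t_0$. For the reverse inequality, I use the blow-up criterion Theorem~\ref{thm:blowup}(i) in the $q_*$-setting: if $\tau<\sigma-t_0$, then $Y(t_0+\cdot)$ is continuous in $X_{1-\beta}$ near $\tau$. So the limit of $Z$ as $t\uparrow\tau$ exists in $X^{(q_*)}_{1-\frac{1+\kappa}p,p}$ by Step~1, which contradicts the theorem. The delicate part is handling the random times, and I would do it on the events $\{\tau<\sigma-t_0<T\}$, intersected with a localizing sequence.

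\emph{Step 4: conclusion.} Let $t_0$ run through $t_0=1/n$, $n\in\N$. Almost surely, every compact subinterval of $(0,\sigma)$ lies in some $[1/n,\sigma)$ with $Y$ equal there to the restarted solution. This gives
$$Y\in L^p_{\loc}(0,\sigma;X_1^{(q_*)})\cap C((0,\sigma);X^{(q_*)}_{1-\frac{1+\kappa}p,p}).$$
The componentwise statements for $\vel$ and $\dir$ then follow by reading off the two factors of $X_1^{(q_*)}$.
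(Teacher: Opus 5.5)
Your proposal is correct and follows the paper's proof essentially step by step. The shared steps are: positive-time regularity $Y(t_0)\in X_{1-\beta}\hookrightarrow X^{(q_*)}_{1-\frac{1+\kappa}{p},p}$ from \eqref{eq:bootstrap-q-window}, a restart in the $q_*$-framework, identification with $Y$ by uniqueness in the $q$-framework, and exclusion of $\tau<\sigma-t_0$ via Theorem~\ref{thm:blowup}(i) in the $q_*$-setting. There are two minor differences: you obtain $\tau\le\sigma-t_0$ from maximality (a concatenation argument, legitimate once $Z=Y(t_0+\cdot)$ near $t_0$), where the paper applies Theorem~\ref{thm:blowup}(i) in the $q$-setting, and your unused aside about also restarting with $\kappa=0$ is not justified, because \eqref{eq:bootstrap-q-window} does not give the embedding into the smaller $\kappa=0$ trace space.
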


\begin{proof}
Fix $\varepsilon > 0$ and consider the event
$$
V_\varepsilon := \{ \sigma > \varepsilon\}.
$$
By Proposition~\ref{prop:time-regularisation-kappa-pos} if $\kappa>0$,
and by Proposition~\ref{prop:time regularisation kappa0} if $\kappa=0$, for every sufficiently small
$\eta>0$, we have
$$
Y\in C_{\loc}^{ \beta - \eta}((0,\sigma);X_{1 - \beta})
\qquad  \text{a.s.}
$$
Hence
$$
1_{ V_\varepsilon } Y( \varepsilon) \in X_{1-\beta} = \mathbb{H}^{-s+2-2\beta , q}( \T^d) \times H^{3-s-2\beta, q}( \T^d; \R^d)  \qquad  \text{a.s.}
$$
Next, \eqref{eq:bootstrap-q-window} guarantees the embedding
\begin{equation}
X_{1-\beta} \hookrightarrow X_{1-\frac{1+\kappa}{p},p}^{(q_*)}.
    \label{eq: embedding for X 1 beta}
\end{equation}
Indeed, for the velocity component one has
$$
-s+2-2\beta-\frac{d}{q} > -s+ 2- \frac{2(1+\kappa)}{p}-  \frac{d}{q_*},
$$
and the director component satisfies the same estimate, shifted by one derivative.
Therefore,
$$
1_ {V_\varepsilon} Y(\varepsilon) \in X_ {1-\frac{1+\kappa}{p}, p }^{ (q_*) }
\qquad \text{a.s.}
$$
We now restart \eqref{eq:MLC0} at time $\varepsilon$ in the $(p,\kappa,q_*)$-framework.
By Proposition \ref{prop:loc well-posed Lp} applied with $q_*$ in place of $q$, there exists a unique maximal local solution $(Z,\tau_*)$ on $[\varepsilon,\infty)$ with
initial value
$$
Z(\varepsilon) = 1_{V_\varepsilon} Y( \varepsilon),
$$
such that almost surely
$$
Z \in L^p_{\loc}( \varepsilon, \tau_*; X_1^{(q_*) })
\cap \rC ([\varepsilon,\tau_*); X_{ 1-\frac{1+\kappa}{p},p}^{(q_*)}).
$$
Since $q_*>q$, we have the embeddings
$$
X_1^{(q_*)}\hookrightarrow X_1,  \qquad X_{1-\frac{1+\kappa}{p},p}^{(q_*)}  \hookrightarrow X_{1-\frac{1+\kappa}{p} , p}.
$$
Hence $Z$ is also a local solution in the original $(p,\kappa,q)$-framework started at $\varepsilon$. By maximality of the maximal local solution, we have
$$
Y=Z \qquad\text{a.s. on }V_\varepsilon \times [ \varepsilon, \sigma \wedge \tau_*).
$$
It remains to prove that $\tau_*=\sigma$ a.s. on $V_\varepsilon$.

First, assume that $\mathbb{P}(V_\varepsilon \cap \{\sigma<\tau_*\}) > 0 $. Then the continuity of $Z$ in
$X_{1-\frac{1+\kappa}{p},p}^{(q_*)}$ yields a limit of $Y$ at time $\sigma$ in
$X_{1-\frac{1+\kappa}{p},p}$, contradicting Theorem \ref{thm:blowup}(i). Hence $\sigma<\tau_*$ is impossible. Conversely, assume that $\mathbb{P}(V_\varepsilon\cap\{\tau_*<\sigma \} )>0$. Since $\tau_*<\sigma$, Proposition \ref{prop:time-regularisation-kappa-pos}
if $\kappa>0$, and Proposition \ref{prop:time regularisation kappa0}
if $\kappa=0$, imply that
\begin{equation}
Y(\tau_*) \in  X_{1-\beta}
\qquad \text{a.s. on }V_\varepsilon\cap\{\tau_*<\sigma\} .
    \label{eq: Y tau star in X 1 minus beta}
\end{equation}
By the embedding in \eqref{eq: embedding for X 1 beta}, we have
$$
Y( \tau_* ) \in X_{1 - \frac{1+\kappa}{p},p}^{ ( q_* )}
\qquad  \text{a.s. on } V_\varepsilon \cap \{\tau_*<\sigma\}.
$$
Using the continuity of $Y$ in $X_{1-\beta}$ at $\tau_*<\sigma$, the embedding \eqref{eq: embedding for X 1 beta}, and the equality $Y=Z$ before $\tau_*$, we obtain
$$
\lim_{ t \uparrow \tau_*}  Z(t) =Y(\tau_*)
 \quad \text{in } X_{1 - \frac{1+\kappa}{p},p}^{(q_*)}
$$
a.s. on $V_\varepsilon \cap \{\tau_*<\sigma\}$.
This contradicts Theorem \ref{thm:blowup}(i), applied with $q$ replaced by $q_*$.
Hence $\tau_*<\sigma$ is impossible as well.

Therefore $\tau_*=\sigma$ a.s. on $V_\varepsilon$. Since $\varepsilon>0$ was arbitrary,
the claim follows.
\end{proof}

\begin{rem}[How to choose the exponent $q_*$]\label{rem:choose-qstar}
Fix $\beta \in (0, \min\{ \frac{1}{2}, \frac{ 1+ \kappa}{p} \} )$. If
$$
1-\frac{1+\kappa}{p} < \frac{d-(s-1) q}{ 4q } + \frac{s}{2}
\qquad  \text{and}  \qquad
 q < \frac{d}{s-1}\quad\text{if }s>1,
$$
with no additional upper restriction when $s=1$, then one may choose $q_*>q$ sufficiently close to $q$ so that
\eqref{eq:bootstrap-q-subcrit1}-\eqref{eq:bootstrap-q-subcrit3} remain valid and
$$
0 < \frac{d}{q} - \frac{d}{q_*}< 2\left(\frac{1+\kappa}{p}-\beta \right).
$$
Hence, if the lower inequality in \eqref{eq:subcrit1} is strict and $q<\frac{d}{s-1}$ when $s>1$, then this argument gives some gain in spatial integrability. If $q=\frac{d}{s-1}$ with $s>1$, or if the lower inequality in \eqref{eq:subcrit1} is an equality, the present argument does not give a larger exponent $q_*$. Equality in the upper inequality is not an obstruction.
\end{rem}

\begin{rem}\label{rem:bootstrap-Lq-kappa0}
In the case $ \kappa = 0$, the admissible gain is smaller: one must choose $q_*>q$ sufficiently
close to $q$ so that
$$
0 < \frac{d}{q} - \frac{d}{q_*} < \frac{2}{p}.
$$
Since $\beta$ may be chosen arbitrarily small in Proposition \ref{prop:time regularisation kappa0},
this still yields a nontrivial bootstrap step in spatial integrability whenever a choice of $q_*>q$
compatible with the admissibility conditions is available.
\end{rem}

We now discuss the Sobolev parameter. A genuine gain in Sobolev regularity corresponds to
\emph{decreasing} the parameter $s$: if $s_*<s$, then $\mathbb{H}^{2-s_*,q}$ and $H^{3-s_*,q}$
are strictly more regular than $\mathbb{H}^{2-s,q}$ and $H^{3-s,q}$. 

Fix $s_*\in[1,s)$ and define
$$
X_0^{(s_*)} := \mathbb{H}^{-s_*,q}( \T^d) \times H^{1-s_*,q}(\T^d;\R^d), \qquad X_1^{(s_*)} := \mathbb{H}^{2-s_*,q}(\T^d) \times  H^{3-s_*,q}(\T^d;\R^d),
$$
and, for $\theta \in [0,1]$
$$
X_ \theta^{ (s_*) } := \mathbb{H}^{-s_*+2\theta,q} ( \T^d ) \times H^{1-s_*+2\theta,q} ( \T^d;\R^d).
$$
From \eqref{eq:tracespaceLp}, the corresponding trace space is
$$
\rX_{1-\frac{1+\kappa}{p},p}^{(s_*)} =
\mathbb{B} ^ {- s_* +  2- \frac{ 2 (1+\kappa)}{p}}_{q,p}( \T^d; \R^d)
\times \rB^{ -s_*+3-\frac{2(1+\kappa)}{p}}_{q,p}( \T^d;\R^d).
$$

\begin{prop}[Bootstrap in Sobolev regularity]
\label{prop:bootstrap s case}
Assume the hypotheses of Proposition \ref{prop:loc well-posed Lp}, and assume in addition that $\kappa\in[0,\frac{p}{2}-1)$. Let $(Y, \sigma)$ be the maximal local solution from Proposition \ref{prop:loc well-posed Lp}. Let $s_*\in[1,s)$ be such that the hypotheses of Proposition \ref{prop:loc well-posed Lp} remain valid with $s$ replaced by $s_*$.
In particular, assume
\begin{align}
1-\frac{1+\kappa}{p} &\leq \frac{d-(s_*-1)q}{4q}+\frac{s_*}{2}
\leq 1-\frac{1+\kappa}{2p}, \label{eq:bootstrap-s-subcrit1}\\
\frac{1}{k} &> s_*-3+\frac{d}{q}+2\Big(\frac{1+\kappa}{p}\Big), \label{eq:bootstrap-s-subcrit2}\\
q &\leq \frac{d}{s_*-1}, \quad \text{ if } s_* >1.
\label{eq:bootstrap-s-subcrit3}
\end{align}
Assume moreover that there exists $\beta\in ( 0 , \min\{\frac{1}{2},\frac{1+\kappa}{p}\})$ such that
\begin{equation}\label{eq:bootstrap-s-window}
0 < s-s_* < 2 \left(\frac{1+\kappa}{p}-\beta\right).
\end{equation}
Then almost surely,
$$
Y \in L^p_{\loc} (0, \sigma;X_1^{(s_*)})
\cap \rC ((0,\sigma); X_{1-\frac{1+\kappa}{p},p}^{ (s_*)}).
$$
Equivalently,
$$
\vel \in L^p_{\loc} (0,\sigma;\mathbb{H}^{2-s_*,q} ( \T^d) ),
\qquad \dir \in L^p_{\loc}(0,\sigma;H^{3-s_*,q}(\T^d;\R^d) ).
$$
\end{prop}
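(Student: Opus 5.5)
The argument copies the proof of Proposition~\ref{prop: bootstrap in Lq}, with the integrability exponent $q$ held fixed and the Sobolev index $s$ lowered to $s_*$. I would restart the equation at positive times in the $(p,\kappa,s_*)$-framework, identify the restarted solution with $Y$ by uniqueness, and use the no-trace-limit criterion of Theorem~\ref{thm:blowup}(i) to show the two solutions live equally long.

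\textbf{Step 1: regularity at positive times.} Fix $\varepsilon>0$ and set $V_\varepsilon=\{\sigma>\varepsilon\}$. By Proposition~\ref{prop:time-regularisation-kappa-pos} for $\kappa>0$, or Proposition~\ref{prop:time regularisation kappa0} for $\kappa=0$, we have $Y\in C^{\beta-\eta}_{\loc}((0,\sigma);X_{1-\beta})$ for small $\eta>0$. Hence $\mdsone_{V_\varepsilon}Y(\varepsilon)\in X_{1-\beta}$, and $Y(t)\in X_{1-\beta}$ for all $t\in(0,\sigma)$. Here $X_{1-\beta}=\mathbb H^{2-s-2\beta,q}\times H^{3-s-2\beta,q}$.

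\textbf{Step 2: the key embedding.} I would show
\[
X_{1-\beta}\hookrightarrow X^{(s_*)}_{1-\frac{1+\kappa}{p},p}.
\]
Both spaces have the same integrability $q$. For the velocity component it suffices that $2-s-2\beta>2-s_*-\frac{2(1+\kappa)}{p}$, which is exactly \eqref{eq:bootstrap-s-window}. The strict inequality gives $H^{a,q}\hookrightarrow B^{b}_{q,p}$ for $a>b$ on $\T^d$, with any $p$. The director component is the same statement shifted by one derivative. The Helmholtz projection commutes with these embeddings, so the divergence-free constraint is preserved.

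\textbf{Step 3: restart and identification.} By Proposition~\ref{prop:loc well-posed Lp} with $s$ replaced by $s_*$, which is admissible by \eqref{eq:bootstrap-s-subcrit1}--\eqref{eq:bootstrap-s-subcrit3}, there is a unique maximal solution $(Z,\tau_*)$ on $[\varepsilon,\infty)$ with $Z(\varepsilon)=\mdsone_{V_\varepsilon}Y(\varepsilon)$. It satisfies
\[
Z\in L^p_{\loc}(\varepsilon,\tau_*;X_1^{(s_*)})\cap C([\varepsilon,\tau_*);X^{(s_*)}_{1-\frac{1+\kappa}{p},p}).
\]
Since $s_*<s$, we have $X^{(s_*)}_1\hookrightarrow X_1$ and $X^{(s_*)}_{1-\frac{1+\kappa}{p},p}\hookrightarrow X_{1-\frac{1+\kappa}{p},p}$. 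Thus $Z$ is also a local solution in the original framework, and uniqueness of the maximal solution gives $Y=Z$ on $V_\varepsilon\times[\varepsilon,\sigma\wedge\tau_*)$.

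\textbf{Step 4: equal lifetimes.} I expect this to be the main obstacle, because it needs care with stopping times and with which blow-up criterion is applied.

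On $\{\sigma<\tau_*\}\cap V_\varepsilon$, continuity of $Z$ in $X^{(s_*)}_{1-\frac{1+\kappa}{p},p}$ at $\sigma$ gives a limit of $Y$ at $\sigma$ in the coarser trace space. This contradicts Theorem~\ref{thm:blowup}(i) for the $s$-framework.

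On $\{\tau_*<\sigma\}\cap V_\varepsilon$, continuity of $Y$ in $X_{1-\beta}$ at $\tau_*$ together with Step~2 gives $\lim_{t\uparrow\tau_*}Z(t)=Y(\tau_*)$ in $X^{(s_*)}_{1-\frac{1+\kappa}{p},p}$. This contradicts Theorem~\ref{thm:blowup}(i) applied in the $s_*$-framework. That theorem holds there because all its hypotheses were verified with $s_*$ in place of $s$.

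Theorem~\ref{thm:blowup}(i) is stated for deterministic horizons $T$. To apply it to a solution restarted at the deterministic time $\varepsilon$, I would use time-shift invariance of the Brownian motion with respect to the shifted filtration. One also has to check that Theorem~\ref{thm:blowup}(i) is used only with the $\mathscr F_\varepsilon$-measurable initial datum $\mdsone_{V_\varepsilon}Y(\varepsilon)$.

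Hence $\tau_*=\sigma$ a.s. on $V_\varepsilon$. Letting $\varepsilon\downarrow0$ along a countable sequence yields $Y\in L^p_{\loc}(0,\sigma;X_1^{(s_*)})\cap C((0,\sigma);X^{(s_*)}_{1-\frac{1+\kappa}{p},p})$ almost surely.
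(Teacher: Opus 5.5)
Your proposal is correct and follows the paper's proof essentially step for step. Both arguments use the same ingredients in the same order: positive-time regularity $Y(t)\in X_{1-\beta}$, the embedding $X_{1-\beta}\hookrightarrow X^{(s_*)}_{1-\frac{1+\kappa}{p},p}$ obtained from the window condition \eqref{eq:bootstrap-s-window}, a restart at $\varepsilon$ in the $s_*$-framework identified with $Y$ by uniqueness, and Theorem~\ref{thm:blowup}(i) applied in both frameworks to rule out $\sigma<\tau_*$ and $\tau_*<\sigma$ on $V_\varepsilon$. Your remarks on the shifted filtration and on taking a countable sequence $\varepsilon\downarrow0$ only make explicit details that the paper leaves implicit.
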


\begin{proof}
Fix $\varepsilon>0$ and work on the event $
V_\varepsilon := \{\sigma>\varepsilon\}.$ By Proposition \ref{prop:time-regularisation-kappa-pos} (if $\kappa >0$) and by Proposition \ref{prop:time regularisation kappa0} (if $\kappa = 0$), for every sufficiently small
$\eta>0$ one has
$$
Y\in \rC_{\loc}^{ \beta - \eta}  ((0,\sigma) ; X_{1-\beta})   \qquad  \text{a.s.}
$$
Hence
$$
1_{V_\varepsilon} Y (\varepsilon) \in X_{1-\beta}
= \mathbb{H}^{-s+2-2\beta,q}(\T^d) \times H^{ 3-s-2\beta, q}(\T^d;\R^d)
\qquad\text{a.s.}
$$
Next, \eqref{eq:bootstrap-s-window} ensures the embedding
$$
X_{1-\beta} \hookrightarrow X_{1-\frac{1+\kappa}{p},p}^{(s_*)}.
$$
Indeed, for the velocity component one has
$$
-s+2-2\beta>-s_*+2-\frac{2(1+\kappa)}{p},
$$
and the director component satisfies the same estimate, shifted by one derivative. The embedding $H^{\rho_1,q}(\T^d)\hookrightarrow \rB^{\rho_2}_{q,p}(\T^d)$ holds whenever $\rho_1>\rho_2$ and $1<p,q<\infty$. Applying it to the preceding strict inequality and to the director component, we obtain
$$
1_{V_\varepsilon}Y(\varepsilon) \in X_{1-\frac{1+\kappa}{p},p}^{(s_*)}
 \qquad\text{a.s.}
$$
We now restart \eqref{eq:MLC0} at time $\varepsilon$ in the $(p,\kappa,s_*)$-framework.
By Proposition \ref{prop:loc well-posed Lp} applied with $s_*$ in place of $s$,
there exists a unique maximal local solution $(Z,\tau_*)$ on $[\varepsilon,\infty)$ with
initial value
$$
Z(\varepsilon)=1_{V_\varepsilon}Y(\varepsilon),
$$
such that almost surely
$$
Z\in L^p_{\loc}(\varepsilon,\tau_*;X_1^{(s_*)})
\cap C([\varepsilon,\tau_*);X_{1-\frac{1+\kappa}{p},p}^{(s_*)}).
$$
Since $s_*<s$, we have the continuous embeddings
$$
X_1^{(s_*)}\hookrightarrow X_1,
\qquad
  X_{1-\frac{1+\kappa}{p},p}^{(s_*)}  \hookrightarrow X_{1-\frac{1+\kappa}{p},p}.
$$
Hence $Z$ is also a local solution in the original $(p,\kappa,s)$-framework started at $\varepsilon$.
By pathwise uniqueness in Proposition \ref{prop:loc well-posed Lp},
$$
Y=Z
  \qquad\text{a.s. on }V_\varepsilon\times[\varepsilon,\sigma\wedge\tau_*).
$$
It remains to prove that $\tau_*=\sigma$ a.s. on $V_\varepsilon$. First, assume that $\mathbb{P}(V_\varepsilon\cap\{\sigma<\tau_*\})>0$.
Then the continuity of $Z$ in
$X_{1-\frac{1+\kappa}{p},p}^{(s_*)}$ yields a limit of $Y$ at time $\sigma$ in
$X_{1-\frac{1+\kappa}{p},p}$, contradicting Theorem \ref{thm:blowup}(i).
Hence $\sigma<\tau_*$ is impossible.

Conversely, assume that $\mathbb{P}(V_\varepsilon\cap\{\tau_*<\sigma\})>0$.
Since $\tau_*<\sigma$, Proposition \ref{prop:time-regularisation-kappa-pos} (if $\kappa >0$) and Proposition \ref{prop:time regularisation kappa0} (if $\kappa = 0$), implies that
$$
Y(\tau_*)\in X_{1-\beta}
\qquad\text{a.s. on }V_\varepsilon\cap\{\tau_*<\sigma\}.
$$
By the embedding above,
\begin{equation}
X_{1-\beta}\hookrightarrow X_{1-\frac{1+\kappa}{p},p}^{(s_*)},
\label{eq: aux embedding in trace space}
\end{equation}
and therefore
$$
Y(\tau_*)\in X_{1-\frac{1+\kappa}{p},p}^{(s_*)}
\qquad\text{a.s. on }V_\varepsilon\cap\{\tau_*<\sigma\}.
$$
Using
$$
Y=Z \qquad\text{a.s. on }V_\varepsilon\times[\varepsilon,\tau_*),
$$
together with the continuity of $Y$ in $X_{1-\beta}$ at $\tau_*<\sigma$ and the embedding \eqref{eq: aux embedding in trace space}, we obtain
$$
\lim_{t\uparrow\tau_*} Z(t)=Y(\tau_*)
  \quad\text{in }X_{1-\frac{1+\kappa}{p},p}^{(s_*)}
$$
a.s. on $V_\varepsilon\cap\{\tau_*<\sigma\}$.
This contradicts Theorem \ref{thm:blowup}(i), applied with $s$ replaced by $s_*$. Hence $\tau_*<\sigma$ is impossible as well. Therefore $\tau_*=\sigma$ a.s. on $V_\varepsilon$. Since $\varepsilon>0$ was arbitrary,
the claim follows.
\end{proof}

\begin{rem}[How to choose the exponent $s_*$]\label{rem:choose-sstar}
Fix $\beta \in (0, \min\{\frac{1}{2},\frac{1+\kappa}{p}\})$.
Assume that $s>1$ and that the lower bound in \eqref{eq:subcrit1} is strict, i.e.
$$
1-\frac{1+\kappa}{p} < \frac{d-(s-1)q}{4q} + \frac{s}{2}.
$$
Then one may choose $s_*\in[1,s)$ sufficiently close to $s$ so that
\eqref{eq:bootstrap-s-subcrit1}-\eqref{eq:bootstrap-s-subcrit3} remain valid and
$$
0<s-s_*<2\left(\frac{1+\kappa}{p}-\beta\right).
$$
Thus the argument yields a nontrivial gain in Sobolev regularity whenever there is room to move $s$ downward.
If $s=1$, or if the lower bound in \eqref{eq:subcrit1} is saturated, this particular argument does not produce such a gain within the regime of Proposition \ref{prop:loc well-posed Lp}.
\end{rem}

\appendix

\section{Kadlec's formula}

\noindent
We state and prove a Kadlec's formula for Neumann boundary conditions below. In particular, we use a result from [Chap 5, Taylor Vol.\ 1], of which Theorem 1.3 announces a similar result with Dirichlet boundary conditions. 
\begin{lemma}[Kadlec's formula with Neumann boundary conditions] \label{lemma:Kadlec}\hfill\\
    Let $\Dom \subset \R^d$ be a convex smooth bounded domain. For all $f \in \{g \in H^2(\Dom, \R) | \partial_\nu g = 0 \text{ on } \partial\Dom\}$,
    \begin{equation*}
        \|(\nabla \otimes \nabla) f\|_{L^2(\Dom,\R^{d \times d})} \leq \|\Delta f\|_{L^2(\Dom,\R)}.
    \end{equation*}
    In particular, this shows that the inner product  $(\cdot , \cdot)_V$ defined in \eqref{eq: def inn prod V} is equivalent to the standard inner product on $H^1(\Dom;\R^d)\times H^2(\Dom;\R^d)$.
\end{lemma}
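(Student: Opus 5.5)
The argument integrates by parts twice, following the classical proof of Kadlec's formula, and uses the Neumann condition to turn the boundary term into a second fundamental form term, whose sign is given by convexity. First take $f\in C^\infty(\overline{\Dom})$ with $\partial_\nu f=0$ on $\partial\Dom$. Integrating by parts twice gives the pointwise-to-integral identity
\begin{equation*}
\int_\Dom |\Delta f|^2\,\dd\Dom-\int_\Dom |(\nabla\otimes\nabla) f|^2\,\dd\Dom
=\int_{\partial\Dom}\Bigl(\Delta f\,\partial_\nu f-\nabla f\cdot\partial_\nu\nabla f\Bigr)\,\dd S,
\end{equation*}
which comes from $\partial_i\partial_i f\,\partial_j\partial_j f-\partial_i\partial_j f\,\partial_i\partial_j f=\partial_j(\partial_j f\,\Delta f)-\partial_i(\partial_j f\,\partial_i\partial_j f)$. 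Because $\partial_\nu f=0$, the first boundary term vanishes. Moreover $\nabla f$ is tangent to $\partial\Dom$, so $\nabla f\cdot\partial_\nu\nabla f=\tfrac12\partial_\nu|\nabla f|^2$ involves only tangential components.

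The key step is to compute the remaining boundary integrand. Extend $\nu$ smoothly near $\partial\Dom$ and differentiate the identity $\nu\cdot\nabla f=0$ in a tangential direction $\tau=\nabla f$. This gives $\nabla f\cdot(\nabla\otimes\nabla f)\,\nu=-\nabla f\cdot(D\nu)\nabla f$, where $D\nu$ restricted to the tangent space is the second fundamental form (Weingarten map) $\mathrm{II}$. The boundary term therefore equals $\int_{\partial\Dom}\mathrm{II}(\nabla f,\nabla f)\,\dd S$. For the outward normal on a convex domain, $\mathrm{II}$ is positive semidefinite, so
\begin{equation*}
\|\Delta f\|_{L^2}^2-\|(\nabla\otimes\nabla)f\|_{L^2}^2=\int_{\partial\Dom}\mathrm{II}(\nabla f,\nabla f)\,\dd S\geq 0.
\end{equation*}
This is exactly the computation in Taylor (Vol.~1, Ch.~5) used for Dirichlet data. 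In the Dirichlet case the relevant term is the mean curvature times $|\partial_\nu f|^2$; here it is the full second fundamental form applied to the tangential gradient. I expect this sign bookkeeping to be the main point requiring care: the convention for $\nu$, the sign of $\mathrm{II}$, and the fact that normal–normal components drop out.

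Next, extend to all $f\in H^2$ with $\partial_\nu f=0$ by density. The set of smooth functions on $\overline{\Dom}$ satisfying the Neumann condition is dense in this space for the $H^2$ norm. To see this, approximate $f$ via the Neumann Laplacian: elliptic regularity on smooth domains gives that $(I-\Delta_N)^{-1}$ maps $C^\infty(\overline{\Dom})$ into smooth Neumann functions. Take $g_j\to (I-\Delta_N)f$ in $L^2$ with $g_j$ smooth. Then $f_j=(I-\Delta_N)^{-1}g_j\to f$ in $H^2$ by the $H^2$-regularity estimate. Both sides of the inequality are continuous in $H^2$, so the inequality passes to the limit.

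Finally, for the equivalence of norms, the $V$-norm is bounded by the standard $H^1\times H^2$ norm because $|\Delta f|\leq\sqrt d\,|(\nabla\otimes\nabla)f|$. The converse bound follows from the Kadlec inequality just proved, applied componentwise to $\dir$, together with the fact that the remaining $H^1$ parts of the two norms coincide.
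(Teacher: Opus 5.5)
Your proposal is correct and follows essentially the same route as the paper: integrate by parts, use the Neumann condition to kill the $\Delta f\,\partial_\nu f$ term, and differentiate $\nu\cdot\nabla f=0$ along the tangential field $\nabla f$. This turns the remaining boundary term into $\int_{\partial\Dom}\nabla f\cdot D\nu\,\nabla f\,\mathrm{d}S\geq 0$ by convexity, and density of smooth Neumann functions finishes the argument. The differences are minor: your density step applies $(I-\Delta_N)^{-1}$ to smooth approximants of $f-\Delta f$, a slightly cleaner variant of the paper's correction of smooth approximants by solutions of $\Delta\tilde\phi_n=\tilde\phi_n$, $\partial_\nu\tilde\phi_n=\partial_\nu\phi_n$, and you also spell out the norm equivalence, which the paper leaves implicit.
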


\begin{proof}
First we prove that $C_N^\infty := \{f \in C^\infty(\overline\Dom, \R) | \partial_\nu f = 0 \text{ on } \partial\Dom\}$ is dense in $H_N^2 := \{f \in H^2(\Dom, \R) | \partial_\nu f = 0 \text{ on } \partial\Dom\}$, both equipped with the norm $\|\cdot\|_{H^2(\Dom, \R)}$. Let $f \in H_N^2$. By density of $C^\infty(\overline\Dom, \R)$, there exists a sequence $\phi_n \in C^\infty(\overline\Dom, \R)$ such that $\phi_n \rightarrow f$ in $H^2(\Dom, \R)$. Consequently, $\nabla \phi_n \rightarrow \nabla f$ in $H^1(\Dom, \R^d)$, and then $\partial_\nu \phi_n \rightarrow \partial_\nu f = 0$ in $H^{1/2}(\partial\Dom, \R)$. We denote by $\tilde{\phi}_n$ the solutions to the following elliptic problems,
\begin{equation*}
    \Delta \tilde{\phi}_n = \tilde{\phi}_n, \quad \partial_\nu \tilde{\phi}_n = \partial_\nu \phi_n \text{ on } \partial \Dom.
\end{equation*}
By standard arguments, we infer that $\|\tilde{\phi}_n\|_{H^1(\Dom, \R)} + \|\Delta \tilde{\phi}_n\|_{L^2(\Dom, \R)} \rightarrow 0$, and then, by \cite[Proposition 7.5]{Taylor_1996}, we have $\tilde{\phi}_n \rightarrow 0$ in $H^2(\Dom, \R)$. Moreover, $\tilde{\phi}_n \in C^\infty(\overline\Dom, \R)$ by induction on the previous proposition. Therefore, we have $\hat{\phi}_n := \phi_n - \tilde{\phi}_n \in C_N^\infty$ by construction, and $\hat{\phi}_n \rightarrow f$ in $H_N^2$, which proves density.

\bigskip

\noindent
Now, let $f \in C_N^\infty$. Thus, by integration by parts,
\begin{equation*}
    \|(\nabla \otimes \nabla) f \|_{L^2(\Dom,\R^{d \times d})}^2 = \int_{\Dom} (\Delta f)^2\d \Dom + \sum_{i,j = 1}^d \int_{\partial \Dom} \nu_i \partial_j f \partial_{ij} f \: d\partial\Dom,
\end{equation*}
since $\sum_{i =1}^d \nu_i \partial_i f = \partial_\nu f = 0$ on $\partial \Dom$. Then, we rewrite the second term on the RHS,
\begin{align*}
    \sum_{i,j = 1}^d \int_{\partial \Dom} \nu_i \partial_j f \partial_{ij} f \: d\partial\Dom &= \sum_{i,j = 1}^d \int_{\partial \Dom} \partial_j f \partial_j (\nu_i \partial_i f) - (\partial_j f) (\partial_i f) (\partial_j \nu_i) \: d\partial\Dom\\
    &= \int_{\partial \Dom} \nabla f \cdot \nabla ( \partial_\nu f) - (\nabla f) \cdot J_\nu \cdot (\nabla f) \: d\partial\Dom,
\end{align*}
Here, since the outward unit normal $\nu$ is initially defined only
on $\partial\Dom$, we extend it to points sufficiently close
to the boundary by setting $\nu=\nabla\rho$, where $\rho$
is the signed distance to $\partial\Dom$ (positive outside
$\Dom$ and negative inside).
This consists in assigning to each such point the outward
unit normal at its nearest boundary point.
We denote the Jacobian of this extension by
$J_\nu=D\nu=D^2\rho$. Now, denote by $\partial_\tau f := \nabla f - (\partial_\nu f) \nu$. Hence, it holds that
\begin{equation*}
    \int_{\partial \Dom} \nabla f \cdot \nabla ( \partial_\nu f) \: d\partial\Dom = \int_{\partial \Dom} (\partial_\nu f) \partial_\nu ( \partial_\nu f) + \partial_\tau f \cdot \partial_\tau ( \partial_\nu f) \: d\partial\Dom =0,
\end{equation*}
since $\partial_\nu f = \partial_\tau ( \partial_\nu f) = 0$ on $\partial \Dom$. Therefore, 
\begin{align*}
    \|(\nabla \otimes \nabla) f \|_{L^2(\Dom,\R^{d \times d})}^2 &= \|\Delta f \|_{L^2(\Dom,\R)}^2 - \int_{\partial \Dom} (\nabla f) \cdot J_\nu \cdot (\nabla f) \: d\partial\Dom \leq \|\Delta f \|_{L^2(\Dom,\R)}^2,
\end{align*}
because $J_\nu$ has nonnegative eigenvalues, since $\Dom$ is convex. As this holds for all $f \in C_N^\infty$, we conclude by density in $H_N^2$. That is the result we seek.
\end{proof}

\noindent 
{\bf Acknowledgements. }{\small Gianmarco Del Sarto and Tarek Z\"{o}chling acknowledge the support from the DFG project FOR~5528. Esm\'ee Theewis is supported by the VICI subsidy VI.C.212.027 of the Netherlands Organisation for Scientific Research (NWO). Antoine Moneyron acknowledges the support of the ERC project 856408-STUOD. 

\printbibliography

\end{document}